\documentclass[11pt]{amsart}

\usepackage{latexsym,amscd,amssymb,graphicx,color,amsthm,bm,amsmath,
  cancel,enumitem,ytableau}
\usepackage{tikz}
\usepackage[margin=1in]{geometry}
\usepackage{hyperref}
\usepackage[all,cmtip]{xy}
\usepackage{mathtools}
\usepackage{rotating}
\usetikzlibrary{math,arrows.meta}
\usetikzlibrary{calc}

\usepackage[backend=bibtex, style=alphabetic]{biblatex}
\usepackage{float}
\usetikzlibrary{intersections}
\DeclareEmphSequence{\color{blue}\itshape}

\numberwithin{equation}{section}
\newtheorem{theorem}{Theorem}[section]
\newtheorem{proposition}[theorem]{Proposition}
\newtheorem{corollary}[theorem]{Corollary}
\newtheorem{lemma}[theorem]{Lemma}
\newtheorem{conjecture}[theorem]{Conjecture}

\newtheorem{problem}[theorem]{Problem}
\newtheorem{example}[theorem]{Example}
\newtheorem{remark}[theorem]{Remark}
\newtheorem{definition}[theorem]{Definition}
\theoremstyle{definition}

\newcommand{\one}{{\mathbf{1}}}

\newcommand{\Frob}{{\mathrm{Frob}}}
\newcommand{\symm}{{\fS}}

\newcommand{\QQ}{{\mathbb{Q}}}
\newcommand{\ZZ}{{\mathbb{Z}}}

\newcommand{\FF}{\mathsf{F}}

\newcommand{\SYT}{\mathrm{SYT}}
\newcommand{\SSYT}{\mathrm{SSYT}}

\newcommand{\xxx}{{\mathbf{x}}}

\newcommand{\dschub}{\widetilde{S}}

\newcommand{\inv}{\operatorname{inv}}

\newcommand{\crSgn}{\mathrm{cr}^{\mathsf{sign}}}
\newcommand{\alSgn}{\mathrm{al}^{\mathsf{sign}}}
\newcommand{\wex}{\mathrm{wex}}

\newcommand{\numcolor}[1]{{\operatorname{neg}(#1)}}
\newcommand{\coloredfunc}[3]{{\Phi_{#1,#2,#3}}}
\newcommand{\EXC}[1]{{\operatorname{EXC}(#1)}}
\newcommand{\exc}[1]{{\operatorname{exc}(#1)}}
\newcommand{\DES}[1]{\operatorname{DES}(#1)}
\newcommand{\des}[1]{\operatorname{des}(#1)}
\newcommand{\DEX}[1]{\operatorname{DEX}(#1)}

\newcommand{\nega}[1]{\operatorname{neg}(#1)}
\newcommand{\al}[1]{\operatorname{al}^{\mathsf{col}}(#1)}
\newcommand{\cro}[1]{\operatorname{cr}^{\mathsf{col}}(#1)}

\newcommand{\fS}{\mathfrak{S}} 
\newcommand{\fB}{\mathfrak{B}} 
\newcommand{\coloredperm}[2]{{\fB_{#1,#2}}}
\definecolor{crprimed}{HTML}{2767B4}
\definecolor{crnonexc}{HTML}{17805B}
\definecolor{crbarred}{HTML}{C57816}
\definecolor{crunused}{HTML}{CDD3DB}

\newcommand{\CrDiskPanel}[1]{%
  \begin{tikzpicture}[font=\scriptsize,line cap=round,line join=round]
    \path[use as bounding box] (-3.15,-2.88) rectangle (3.15,2.88);
    \draw[crunused,line width=.45pt] (0,0) circle (2.25);
    \draw[crunused,densely dashed,line width=.4pt] (-2.25,0)--(2.25,0);
    \draw[crunused,->,>=Stealth] (150:1.91) arc (150:123:1.91);
    \draw[crunused,->,>=Stealth] (-25:1.91) arc (-25:-52:1.91);
    \foreach \crslot in {0,...,15} {
      \pgfmathsetmacro{\crturn}{180-(\crslot+.5)*360/16}
      \coordinate (crpt-\crslot) at (\crturn:2.25);
    }
    \ifnum#1=0
      \foreach \crslot in {0,...,7} {
        \pgfmathsetmacro{\crturn}{180-(\crslot+.5)*360/8}
        \pgfmathtruncatemacro{\crindex}{mod(\crslot,4)+1}
        \fill (\crturn:2.25) circle (1.8pt);
        \node[fill=white,inner sep=1pt] at (\crturn:2.7)
          {$\crindex$};
      }
    \else
      \ifnum#1>2
        \draw[crprimed,line width=.85pt] (crpt-3)--(crpt-4);
      \fi
      \ifnum#1>3
        \draw[crnonexc,line width=.85pt,name path=crfirstnon]
          (crpt-13)--(crpt-8);
      \fi
      \ifnum#1>4
        \draw[crnonexc,line width=.85pt,name path=crsecondnon]
          (crpt-15)--(crpt-10);
      \fi
      \ifnum#1>5
        \draw[crbarred,line width=.85pt,name path=crbarredchord]
          (crpt-9)--(crpt-6);
      \fi
      \foreach \crslot in {0,...,15} {
        \pgfmathsetmacro{\crturn}{180-(\crslot+.5)*360/16}
        \pgfmathtruncatemacro{\crindex}{mod(\crslot,8)/2+1}
        \pgfmathtruncatemacro{\crrole}{mod(\crslot,2)}
        \def\crlabelcolor{crunused}
        \ifnum#1=1
          \def\crlabelcolor{black}
        \else
          \ifnum\crslot=3 \def\crlabelcolor{crprimed}\fi
          \ifnum\crslot=4 \def\crlabelcolor{crprimed}\fi
          \ifnum\crslot=6 \def\crlabelcolor{crbarred}\fi
          \ifnum\crslot=9 \def\crlabelcolor{crbarred}\fi
          \ifnum\crslot=8 \def\crlabelcolor{crnonexc}\fi
          \ifnum\crslot=10 \def\crlabelcolor{crnonexc}\fi
          \ifnum\crslot=13 \def\crlabelcolor{crnonexc}\fi
          \ifnum\crslot=15 \def\crlabelcolor{crnonexc}\fi
        \fi
        \ifnum\crrole=0
          \node[rectangle,fill=\crlabelcolor,draw=\crlabelcolor,
                inner sep=0pt,minimum size=3.4pt] at (crpt-\crslot) {};
          \node[text=\crlabelcolor,fill=white,inner sep=1pt]
            at (\crturn:2.7) {$\mathrm{tgt}_{\crindex}$};
        \else
          \node[circle,fill=white,draw=\crlabelcolor,line width=.5pt,
                inner sep=0pt,minimum size=3.8pt] at (crpt-\crslot) {};
          \node[text=\crlabelcolor,fill=white,inner sep=1pt]
            at (\crturn:2.7) {$\mathrm{src}_{\crindex}$};
        \fi
      }
      \ifnum#1>6
        \path[name intersections={
          of=crfirstnon and crsecondnon,by=crcrossone}];
        \path[name intersections={
          of=crfirstnon and crbarredchord,by=crcrosstwo}];
        \node[circle,fill=black,draw=white,text=white,
              inner sep=1.5pt,font=\scriptsize] at (crcrossone) {1};
        \node[circle,fill=black,draw=white,text=white,
              inner sep=1.5pt,font=\scriptsize] at (crcrosstwo) {2};
      \fi
    \fi
    \node at (0,.86) {segment $\mathrm I$};
    \node at (0,-.57) {segment $\mathrm {II}$};
  \end{tikzpicture}%
}

\title{Bicolored Eulerian quasisymmetric functions, affine lengths, and Theta operators}
\author{Hsin-Chieh Liao}
\address{Department of Mathematics, Washington University in St. Louis,
  One Brookings Drive, St. Louis, Missouri 63130, USA}
\email{liaoh@wustl.edu}
\author{Hai Zhu}
\address{Department of Mathematics, UC San Diego, La Jolla, CA 92093, USA}
\email{haz138@ucsd.edu}
\date{\today}

\begin{document}

\begin{abstract}

We introduce a $q$-analogue of the bicolored Eulerian quasisymmetric functions by incorporating an additional grading arising from affine length, obtained by interpreting bicolored permutations as affine permutations. 
We also provide an intrinsic description of this refinement in terms of bicolored permutations by introducing notions of alignments and crossings, together with an explicit bijection identifying these statistics with the alignments and crossings in the existing literature. 
Finally, we prove that our refinement is symmetric by expressing it in terms of the Theta operators appearing in the Theta coinvariant Conjecture and the modified Macdonald symmetric functions. Assuming the Theta coinvariant conjecture implies the Schur-positivity of our refinement.

\end{abstract}

\maketitle

\section{Introduction}\label{sec:intro}

\subsection{Eulerian quasisymmetric functions}
Let $[n]\coloneqq\{1,2,\ldots,n\}$ and let $\fS_n$ denote the symmetric group, which we identify with the set of permutations of $[n]$. 
The \emph{Eulerian polynomial} can be defined as an $\fS_n$-enumerator 
\[
    A_n(t)\coloneqq \sum_{\sigma\in\fS_n} t^{\des{\sigma}}=\sum_{\sigma\in\fS_n} t^{\exc{\sigma}},
\]
where $\des{\sigma}\coloneqq\left|\{i\in [n-1] : \sigma_i>\sigma_{i+1}\}\right|$ and $\exc{\sigma}\coloneqq\left|\{i\in [n] : \sigma_i>i\}\right|$ are the descent number and the excedance number of a permutation $\sigma$, respectively. The coefficients of $A_n(t)$ are called the \emph{Eulerian numbers}.
The Eulerian numbers and polynomials have been extensively studied in the mathematics and computer science literature; see, for example, Foata and Sch\"utzenberger \cite{FoataSchutzenbergerEulerian1970} and Knuth \cite[Section 5.1]{KnuthProgramVol3}. 

In their study of the joint distribution of the major index and the excedance number in \cite{ShareshianWachs2010}, 
Shareshian and Wachs introduced a mysterious set statistic $\mathrm{DEX}: \fS_n\longrightarrow2^{[n-1]}$ and defined a family of symmetric function analogues of the Eulerian polynomial, called the \emph{Eulerian quasisymmetric functions}. In particular, one of these functions is
\[
    Q_{n}(\xxx;t)\coloneq \sum_{\sigma\in\fS_n} \FF_{\DEX{\sigma},n}(\xxx) t^{\exc{\sigma}},
\]
where $\FF_{S,n}(\xxx)$ denotes the fundamental quasisymmetric function indexed by a subset $S\subseteq [n-1]$; see Section \ref{subsec:bg-quasisymmetric} for the definition. 
The quasisymmetric function $Q_{n}(\xxx,t)$ is, as mentioned, symmetric; moreover, it is Schur-positive\footnote{In fact, $Q_{n}(\xxx,t)$ is even $h$-positive, which implies that it is the graded Frobenius series of a direct sum of graded permutation representations whose pointwise stabilizers are Young subgroups of $\fS_n$}, that is, it can be expressed as a linear combination of the Schur functions with nonnegative coefficients. This positivity has a geometric and representation-theoretic interpretation. 
The symmetric function $Q_n(\xxx,t)$ is the graded Frobenius characteristic of the $\fS_n$-representation on the cohomology ring of the permutahedral variety, studied by Procesi \cite{Procesi1990}, Stanley \cite{Stanley1989}, Stembridge \cite{Stembridge1992, Stembridge1994}, Dolgachev and Lunts \cite{dolgachev1994character} in the 90's; Equivalently, this cohomology ring can be identified with the Chow ring of the Boolean matroid, whose $\fS_n$-representation is studied by Liao \cite{Liao2023FPSAC, Liao2023One}.

Later, Hyatt extended this family by considering analogous sums of fundamental quasisymmetric functions over the wreath product $\ZZ_r\wr\fS_n$, also known as the \emph{$r$-colored permutation group}. The resulting functions, called \emph{$r$-colored Eulerian quasisymmetric functions}, are all Schur-positive, and some are even $h$-positive; see \cite{HyattColored2012} for further details. 
Despite their positivity properties, much less is known about natural $\fS_n$-representations whose graded Frobenius characteristics realize these symmetric functions.
Recently, Backman, Danner, Liao, and Stepheson \cite{BLDSFPSAC2026, LerayPerm2025} studied a bigraded algebra called the Leray model of a Boolean matroid, which extends the corresponding Chow ring. They showed that the bigraded Frobenius characteristic of the natural $\fS_n$-representation on this model is the \emph{bicolored Eulerian quasisymmetric function}, corresponding to the case $r=2$, defined by
\[
    Q_{n}^{\mathsf{lab}}(\xxx;t,y)\coloneqq \sum_{\pi\in\fB_n} \FF_{\DEX{\pi},n}(\xxx)t^{\exc{\pi}}y^{\nega{\pi}}=\sum_{k=0}^n\sum_{r=0}^{n-k} Q_{n,k,r}^{\mathsf{lab}}(\xxx)t^r y^k.
\]
where $\fB_n$ denotes the set of bicolored permutations of $[n]$. See the definitions in Section \ref{subsec:bg-bicoloredPermutations}.

In this paper, we introduce a new refinement of the bicolored Eulerian quasisymmetric function.  
We show that the refinement can be expressed in the form $\Theta_{e_k}
\Theta_{e_{\ell}}
\widetilde H_{(n-k-\ell)}\left|\right._{t=0}$ that appears in the Theta conjecture, where $\Theta_{e_k}$ is the Theta operator introduced in \cite{DAdderioIraciVandenWyngaerd2021} and $\widetilde H_{(n-k-\ell)}$ is the modified Macdonald symmetric function.
It implies that our refinement is symmetric. 
We conjecture that it is also Schur positive.

\subsection{Theta Conjecture}\label{subsec:intro-theta}

The Theta operators $\Theta_g$, indexed by a symmetric function $g$
(see Subsection~\ref{subsec:bg-theta}), were introduced by
D'Adderio, Iraci, and Vanden Wyngaerd
\cite{DAdderioIraciVandenWyngaerd2021} to refine the Delta conjectures
and study coinvariants in superspace. Their connection with the
Delta operators is expressed by the identity
$\Theta_{e_k}\nabla e_{n-k}=\Delta'_{e_{n-k-1}}e_n$
for $0\leq k<n$.

More precisely, let
\[
A_n^{(2,2)}
 :=\mathbb{C}[\mathbf{x},\mathbf{y}]
   \otimes\bigwedge(\boldsymbol{\theta},\boldsymbol{\xi}),
\qquad
R_n^{(2,2)}
 :=A_n^{(2,2)}/\big\langle
   (A_n^{(2,2)})^{\symm_n}_+\big\rangle,
\]
where each boldface alphabet consists of $n$ variables.
The bosonic variables $\mathbf{x},\mathbf{y}$ commute, whereas all fermionic variables
$\boldsymbol{\theta},\boldsymbol{\xi}$ anticommute.
The symmetric group $\symm_n$ acts by simultaneously permuting the
indices in all four alphabets, and the defining ideal $\big\langle
   (A_n^{(2,2)})^{\symm_n}_+\big\rangle$ is generated
by the invariant elements with vanishing constant term.
D'Adderio, Iraci, and Vanden Wyngaerd \cite[Conjecture 8.2]{DAdderioIraciVandenWyngaerd2021} conjectured that
\begin{equation} \label{eq:CoinvConjecture}
\operatorname{grFrob}(R_n^{(2,2)};q,t,u,v)
\stackrel{?}{=}
\sum_{\substack{k,\ell\geq0\\k+\ell<n}}
u^k v^\ell\,
\Theta_{e_k}\Theta_{e_\ell}\nabla e_{n-k-\ell},
\end{equation}
where $q,t,u,v$ record the degrees in
$\mathbf{x},\mathbf{y},\boldsymbol{\theta},\boldsymbol{\xi}$,
respectively. In particular, the term $k=\ell=0$ recovers
the Frobenius characteristic $\nabla e_n$ of diagonal coinvariants.

The same symmetric functions also arise in models of decorated
labelled Dyck paths. Let $\mathsf{LD}_n^{*k,\bullet\ell}$ denote
Dyck paths of semilength $n$ whose north steps carry positive labels
increasing up each column, with $k$ decorated rises and $\ell$
decorated contractible valleys; see \cite{InterdonatoIraci2026} for details. Interdonato and Iraci
\cite{InterdonatoIraci2026} proposed the bivariate Theta conjecture
\[
\Theta_{e_k}\Theta_{e_\ell}\nabla e_{n-k-\ell}
\stackrel{?}{=}
\sum_{P\in\mathsf{LD}_n^{*k,\bullet\ell}}
q^{\operatorname{dinv}(P)}
t^{\operatorname{area}(P)}x^P,
\qquad k+\ell<n.
\]
Setting $\ell=0$ or $k=0$ recovers the rise or valley version of the
Delta conjecture, respectively. Its $q=1$ specialization was
conjectured in \cite{DAdderioIraciVandenWyngaerd2021} and proved in
\cite{DAdderioInterdonatoIraciPagaria2026}.

Setting $\mathbf{y}=0$ gives the $(1,2)$-bosonic-fermionic
coinvariant ring
$R_n^{(1,2)}
=(\mathbb{C}[\mathbf{x}]\otimes
\bigwedge(\boldsymbol{\theta},\boldsymbol{\xi}))
/\langle(\mathbb{C}[\mathbf{x}]\otimes
\bigwedge(\boldsymbol{\theta},\boldsymbol{\xi}))^{S_n}_+\rangle$.
Accordingly, the $t=0$ specialization predicts that its component
of fermionic bidegree $(k,\ell)$ has graded Frobenius characteristic
\[
\operatorname{SF}(n,k,\ell)
:=
\left.
\Theta_{e_k}\Theta_{e_\ell}\nabla e_{n-k-\ell}
\right|_{t=0}
=
\left.
\Theta_{e_k}\Theta_{e_\ell}
\widetilde H_{(n-k-\ell)}
\right|_{t=0}.
\]
Independent of this representation-theoretic conjecture,
Iraci, Nadeau, and Vanden Wyngaerd \cite{MR4766722} proved the combinatorial formula
\[
\operatorname{SF}(n,k,\ell)
=
\sum_{w\in\mathsf{SW}(n,k,\ell)}
q^{\operatorname{sminv}(w)}x^w,
\]
where $\mathsf{SW}(n,k,\ell)$ denotes the set of segmented Smirnov words of length $n$ with $k$ ascents and $\ell$ descents. 
See Equation~\eqref{eq:theta-known-smirnov-expansion} for its fundamental quasisymmetric function expansion.
Lentfer \cite{Lentfer2025} subsequently proposed a monomial basis
using modified Motzkin paths and established a weight-preserving
bijection with segmented permutations, showing that the resulting
candidate Frobenius series agrees with the Smirnov-word formula.
His conjectural basis and bijection play crucial roles in Section~\ref{sec:proofs-theta-formulas}.

\subsection{Main Results}
We start by introducing our new refinement of the bicolored Eulerian quasisymmetric functions.
For background on bicolored permutations and affine permutations, we refer the reader to Section \ref{subsec:bg-bicoloredPermutations} and \ref{subsec:bg-affine} for further details.

For each bicolored permutation $\pi\in\fB_n$, we associate an affine permutation $f_{\pi}$.
We write $\ell(f_\pi)$ for the \emph{affine length} of $f_{\pi}$.
Let $\coloredperm{n}{k}$ denote the subset of $\fB_n$ consisting of bicolored permutations $\pi$ with $\nega{\pi}=k$. 

\begin{definition}\label{def:al}
Let $\pi\in\coloredperm{n}{k}$ with $\exc{\pi}=r$. The \emph{alignment number} of $\pi$, denoted by $\al{\pi}$, is defined by normalizing the affine length of $f_{\pi}$ as follows:
\begin{equation}\label{eq:al-from-affine-length}
\al{\pi}:=\ell(f_\pi)-r(n-k-r).
\end{equation}
\end{definition}

We then define our refinement of the bicolored Eulerian quasisymmetric function as follows.

\begin{definition}\label{def:bicolored-eulerian-qsf}
For $n\geq1$, $0\leq k\leq n$, and $0\leq r\leq n-k$, define the
\emph{$q$-analogue of the bicolored Eulerian quasisymmetric function}
by
\begin{equation}\label{eq:bicolored-eulerian-qsf}
\coloredfunc{n}{k}{r}(\xxx;q)
\coloneqq
\sum_{\substack{\pi\in\coloredperm{n}{k}\\
                  \exc{\pi}=r}}
q^{\al{\pi}}
\FF_{\DEX{\pi},n}(\xxx).
\end{equation}
Specializing at $q=1$ recovers the bicolored Eulerian
quasisymmetric function $Q_{n,k,r}^{\mathsf{lab}}(\xxx)$.
\end{definition}

\begin{remark}
The normalization term $r(n-k-r)$ in \eqref{eq:al-from-affine-length} is chosen so that the lowest power of $q$ with a nonzero coefficient in $\coloredfunc{n}{k}{r}(\xxx;q)$ is $q^0$, provided that the indexing set is nonempty.
Indeed, Corollary~\ref{cor:minimum-affine-length} shows that, under $n\geq1$, $0\leq k\leq n$, and $0\leq r\leq n-k$, $\al{\pi}\ge 0$ for every $\pi\in\coloredperm{n}{k}$ with $\exc{\pi}=r$, and the minimum value of $\al{\pi}$ over this set is zero whenever the set is nonempty. The indexing set $\{\pi\in \coloredperm{n}{k} : \exc{\pi}=r\}=\varnothing$ if and only if $(k,r)=(0,n)$.
\end{remark}

In Section \ref{sec:crossings-and-affine-length}, we give a combinatorial formulation of $\al{\pi}$ independent of the associated affine permutation $f_{\pi}$, which justifies its name ``alignment number'', together with the notion of crossing numbers of $\pi$.  


\begin{definition}\label{def:one-bar-status-parts}
For the same $n,k,r$ as in Definition \ref{def:bicolored-eulerian-qsf}, define the \emph{unbarred-one part} by
\begin{equation}\label{eq:unbarred-one-part}
\Phi^{\mathrm{unb}}_{n,k,r}(\xxx;q)
:=
\sum_{\substack{\pi\in\coloredperm{n}{k}\\
                  \exc{\pi}=r\\
                  \text{$1$ is unbarred in $\pi$}}}
q^{\al{\pi}}
\FF_{\DEX{\pi},n}(\xxx),
\end{equation}
and the \emph{barred-one part} by
\begin{equation}\label{eq:barred-one-part}
\Phi^{\mathrm{bar}}_{n,k,r}(\xxx;q)
:=
\sum_{\substack{\pi\in\coloredperm{n}{k}\\
                  \exc{\pi}=r\\
                  \text{$1$ is barred in $\pi$}}}
q^{\al{\pi}}
\FF_{\DEX{\pi},n}(\xxx).
\end{equation}
\end{definition}

Note that if $n-k-r=0$, then the sum in
\eqref{eq:unbarred-one-part} is empty; whereas if $k=0$, then the sum in
\eqref{eq:barred-one-part} is empty.
Also, we have
\begin{equation}\label{eq:one-bar-status-decomposition}
\coloredfunc{n}{k}{r}(\xxx;q)
=
\Phi^{\mathrm{unb}}_{n,k,r}(\xxx;q)
+
\Phi^{\mathrm{bar}}_{n,k,r}(\xxx;q).
\end{equation}


We next state the symmetric-function formulas for the two
functions above, related to the Theta conjecture. For the remainder of the introduction, set
\begin{equation}\label{eq:intro-u-delta}
u\coloneqq n-k-r,
\qquad
\delta_{n,k,r}
\coloneqq
\binom{n}{2}-\binom{r+1}{2}-\binom{u}{2}.
\end{equation}

\begin{theorem}[Reduced unbarred-one Theta formula]
\label{conj:unbarred-one-theta}
Let $n\geq1$, $0\leq k< n$, and $0\leq r< n-k$, and hence
$u\geq1$.  Then
\begin{equation}\label{eq:unbarred-one-theta-conjecture}
\Phi^{\mathrm{unb}}_{n,k,r}(\xxx;q)
=
q^{\delta_{n,k,r}}
\omega\!\left(
\left.
\big(\Theta_{e_r}
\Theta_{e_{u-1}}
\widetilde H_{(k+1)}
\big)\right|_{t=0,\,q=q^{-1}}\right),
\end{equation}
where the $(q,t)$-specialization is defined by \eqref{eq:bg-grading-specialization}.
\end{theorem}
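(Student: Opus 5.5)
The plan is to compare fundamental quasisymmetric expansions on both sides and build a weight-preserving bijection. The right-hand side is controlled by the Iraci--Nadeau--Vanden Wyngaerd formula: $\Theta_{e_r}\Theta_{e_{u-1}}\widetilde H_{(k+1)}|_{t=0}=\operatorname{SF}(n,r,u-1)$, with $n=r+(u-1)+(k+1)$. It therefore equals a sum over segmented Smirnov words of length $n$ with $r$ ascents and $u-1$ descents, weighted by $q^{\operatorname{sminv}}$. Equation~\eqref{eq:theta-known-smirnov-expansion} rewrites this as a sum of $q^{\operatorname{sminv}}\FF_{S,n}$ over segmented permutations, meaning standardizations with a descent-type set $S$. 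Applying $\omega$ replaces $\FF_{S,n}$ by $\FF_{[n-1]\setminus S,n}$. Replacing $q$ by $q^{-1}$ and multiplying by $q^{\delta_{n,k,r}}$ converts $\operatorname{sminv}$ into $\delta_{n,k,r}-\operatorname{sminv}$. So the goal is a bijection $\Psi$ from $\{\pi\in\coloredperm{n}{k}:\exc{\pi}=r,\ 1\text{ unbarred}\}$ to these segmented permutations such that
\[
\DEX{\pi}=[n-1]\setminus\operatorname{Des}(\Psi(\pi))
\quad\text{and}\quad
\al{\pi}=\delta_{n,k,r}-\operatorname{sminv}(\Psi(\pi)).
\]

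First, I would replace $\al{\pi}$ by the intrinsic alignment count from Section~\ref{sec:crossings-and-affine-length}. That section presumably shows that alignments plus crossings plus a constant fill a fixed total. Complementing alignments is then equivalent to counting crossings, and I expect $\delta_{n,k,r}$ to be exactly this total. In that form the target identity becomes $\operatorname{cr}(\pi)=\operatorname{sminv}(\Psi(\pi))$ up to a fixed shift, which is more natural to match.

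Second, I would construct $\Psi$ by the standard route for Eulerian quasisymmetric functions. Write $\pi$ in cycle form ordered so that DEX becomes a descent set of a word, following Shareshian--Wachs and Hyatt. The structure should then match up as follows:
\begin{itemize}
\item the $r$ excedances give the $r$ ascents;
\item the $u$ unbarred non-excedances, one of them being the fixed unbarred $1$, give $u-1$ descents;
\item the $k$ barred letters, together with the distinguished $1$, make up the $k+1$ ``segment'' letters coming from $\widetilde H_{(k+1)}$.
\end{itemize}
This explains the shift from $u$ to $u-1$ and from $k$ to $k+1$. Lentfer's bijection between modified Motzkin paths and segmented permutations can serve as the intermediate object. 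I would encode $\pi$ as a weighted Motzkin-type path, reading values in order and recording up-steps for excedances, down-steps for non-excedances and level steps for barred letters. The heights then carry the crossing data, much as in Corteel's crossings for the Foata--Zeilberger bijection. I would then compose with Lentfer's map to reach segmented permutations with sminv.

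The main obstacle is the statistic matching $\operatorname{cr}(\pi)\leftrightarrow\operatorname{sminv}$ while also keeping the DEX/descent compatibility under $\omega$. Each is easy separately; doing both at once is the hard part. As a first attempt, I would verify the correspondence at $q=1$ using the known $Q^{\mathsf{lab}}$ results, and check small $n$ to fix normalizations. I would then prove the graded version by induction on $n$: remove the largest letter $n$ and show that both sides satisfy the same recursion in the insertion positions. On the Theta side, this should mirror the recursion underlying the Smirnov-word formula.
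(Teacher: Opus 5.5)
\paragraph{Where the gap is.} Your outer frame matches the paper's. You rewrite $\al{\pi}=\delta_{n,k,r}-\cro{\pi}$ (Corollary~\ref{cor:cr-al-complement-for-pi}), expand the Theta side by \eqref{eq:theta-known-smirnov-expansion}, and trade $\omega$ for complementing the indexing sets, which is legitimate because the Theta side is symmetric. You then finish with $z=q^{-1}$ and the factor $q^{\delta_{n,k,r}}$. Note that the set to be matched against $\DEX{\pi}$ is $\operatorname{Split}(w)$, defined through thick and thin entries, not an ordinary descent set of $w$.

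Beyond this frame, the whole proof is the step you call the main obstacle and leave open. You need a bijection from the unbarred-one permutations to segmented permutations that sends $\cro{\pi}$ to $\operatorname{sminv}$ and $\DEX{\pi}$ to $[n-1]\setminus\operatorname{Split}$ \emph{simultaneously}. Checking $q=1$ or small $n$ cannot replace this. The induction ``delete the letter $n$'' is also not set up: you name no recursion on the Theta side, and neither $\cro{\pi}$ nor $\DEX{\pi}$ transforms simply under that deletion (even the excedance status of other entries can change).

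\paragraph{Why your hints do not reach it.} Cycle form is a detour. $\DEX{\pi}$ is already the descent set of the one-line word $\pi^D$, and both it and the crossings are expressed in one-line positions and values, which cycle notation scrambles.

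Your path uses one step per value: up for excedances, down for nonexcedances, level for bars. It records only the type word $C$; the weights that would have to encode $\eta$ are never specified. With $r$ up-steps against $u$ down-steps, it goes negative already at the unbarred value $1$ and ends at height $r-u$. So it does not have the shape of Lentfer's codes, whose heights $\operatorname{cap}_i=\operatorname{cap}_{i-1}+1-\beta_i-\gamma_i$ stay nonnegative and end at $k$.

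\paragraph{The missing construction.} What is needed is a prefix scan, in the spirit of Foata--Zeilberger, in which step $i$ reads \emph{both} the value $i$ and the position $i$. The paper sets $\gamma_i=\one_{i\in\operatorname{Prim}(C)}$ and $\beta_i=\one_{\sigma_i\in\operatorname{NonExc}(C)\setminus\{1\}}$. Thus excedances become descents and nonexcedances other than $1$ become ascents, the reverse of your matching. One lands in $\operatorname{SW}(1^n;u-1,r)$ and then uses \eqref{eq:theta-operator-commutativity}.

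The weight $\alpha_i$ is a rank, defined differently in the two cases:
\begin{itemize}
\item when $\beta_i=0$, it is the rank among the still-open positions $\mathcal{P}\mathrm{os}_i$, in the $\operatorname{DEX}$ alphabet order;
\item when $\beta_i=1$, it is the rank among the still-available values $\mathcal{V}\mathrm{al}_i$, in integer order.
\end{itemize}
This mixed rule is what makes both statistics work at once:
\begin{itemize}
\item after rewriting $\alpha_i$ via prefix ranks, $\sum_i\alpha_i=\cro{\pi}$ follows from the signed formula \eqref{eq:signed-cr};
\item $\DEX{\pi}=[n-1]\setminus\operatorname{Asc}(\operatorname{Enc}(\pi))$ follows from a case check on $(\beta_i,\beta_{i+1})$;
\item Lentfer's insertion bijection then carries $\sum_i\alpha_i$ to $\operatorname{sminv}$ and $\operatorname{Asc}$ to $\operatorname{Split}$.
\end{itemize}
You would also have to prove that this encoding is invertible. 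The paper does this with a decoding algorithm that keeps a position list ordered by the letters of $\pi^D$; nothing in your plan supplies this.
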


\begin{remark}
The right-hand side is the reversal of $\omega\!\left(
\left.
\big(\Theta_{e_r}
\Theta_{e_{u-1}}
\widetilde H_{(k+1)}
\big)\right|_{t=0}\right)$ as a polynomial in $q$. We apply $\omega$ and this reversal to make the coefficient of $q^0$ equal to $h_n$, which looks more like the graded Frobenius character of a quotient ring.   
\end{remark}



The corresponding non-reduced formula, without imposing a condition
on the bar status of the value $1$, is as follows. Here and throughout, we adopt the convention
$e_j=0$ for $j<0$, so that
$\Theta_{e_{-1}}=0$.

\begin{theorem}[Non-reduced bicolored Theta formula]
\label{conj:bicolored-theta}
Let $n\geq1$, $0\leq k\leq n$, and $0\leq r\leq n-k$.  Then
\begin{align}
\coloredfunc{n}{k}{r}(\xxx;q)
={}&
q^{\delta_{n,k,r}}
\omega\!\left(
\left.\big(
\Theta_{e_r}
\Theta_{e_{u-1}}
\widetilde H_{(k+1)}
\big)\right|_{t=0,\,q=q^{-1}}\right)
\notag\\
&+
q^{\delta_{n,k,r}}
\omega\!\left(
\left.\big(
\Theta_{e_r}
\Theta_{e_u}
\widetilde H_{(k)}\big)
\right|_{t=0,\,q=q^{-1}}
\right).
\label{eq:bicolored-theta-conjecture}
\end{align}
When $u=0$, the first summand on the right-hand side of
\eqref{eq:bicolored-theta-conjecture} vanishes; whereas when $k=0$, the
second summand vanishes. 
\end{theorem}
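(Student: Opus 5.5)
The plan is to reduce the non-reduced formula to the reduced one, Theorem~\ref{conj:unbarred-one-theta}, through the decomposition \eqref{eq:one-bar-status-decomposition}. Since $\coloredfunc{n}{k}{r}=\Phi^{\mathrm{unb}}_{n,k,r}+\Phi^{\mathrm{bar}}_{n,k,r}$, the first summand of \eqref{eq:bicolored-theta-conjecture} is exactly Theorem~\ref{conj:unbarred-one-theta} when $u\ge 1$. When $u=0$, the sum \eqref{eq:unbarred-one-part} is empty and $\Theta_{e_{u-1}}=\Theta_{e_{-1}}=0$, so both sides of that piece vanish. What remains is to prove
\[
\Phi^{\mathrm{bar}}_{n,k,r}(\xxx;q)=q^{\delta_{n,k,r}}\,\omega\!\left(\left.\big(\Theta_{e_r}\Theta_{e_u}\widetilde H_{(k)}\big)\right|_{t=0,\,q=q^{-1}}\right)
\]
for $k\ge1$. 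When $k=0$ the left side is empty and $\widetilde H_{(0)}$ should be read so that the term vanishes; I will check this convention against the statement.

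For the barred-one part, I would build a bijection $\psi$ from bicolored permutations in $\coloredperm{n}{k}$ with $\exc{\pi}=r$ and $1$ barred onto bicolored permutations of $[n]$ with $k-1$ bars, $r$ excedances, and $1$ unbarred. Under such a map the parameter $u'=n-(k-1)-r=u+1$, and Theorem~\ref{conj:unbarred-one-theta} at $(n,k-1,r)$ produces $\Theta_{e_r}\Theta_{e_{u}}\widetilde H_{(k)}$, which is exactly the desired operator expression. The natural candidate for $\psi$ is simply to unbar the value $1$. In the standard bicolored conventions a barred value is not an excedance, and $1$ unbarred is never an excedance either, so the excedance count is preserved; the number of bars drops by one.

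Next I need $\DEX{\pi}$ to be unchanged by $\psi$. DEX is computed by ordering barred letters below unbarred ones with a shift determined by excedances, and $1$ is the minimum in either color, so only its relative position matters. I expect either invariance or a controlled change that can be repaired by a small cyclic rearrangement.

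Finally I must compare alignment numbers. I need $\al{\pi}=\al{\psi(\pi)}+c$, where $c$ matches the exponent difference $\delta_{n,k,r}-\delta_{n,k-1,r}=\binom{u+1}{2}-\binom{u}{2}=u$. So I would aim to show that unbarring $1$ lowers the affine length $\ell(f_\pi)$ by exactly $u+r$ up to the normalization $r(n-k-r)$, giving a net shift of $u$. I would do this using the inversion description of affine length: barring a value in $f_\pi$ shifts its window entry by a multiple of $n$, so one counts affine inversions involving the entry for $1$. Alternatively, the combinatorial crossing/alignment description from Section~\ref{sec:crossings-and-affine-length} may be used directly.

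The main obstacle is this length computation. The inversion change may depend on the position of $1$ and not be a uniform shift. If unbarring $1$ fails to give a uniform shift, I would instead work at the level of symmetric functions. I would use the Smirnov-word / Lentfer expansion of $\Theta_{e_r}\Theta_{e_u}\widetilde H_{(k)}|_{t=0}$ and repeat the proof strategy of Theorem~\ref{conj:unbarred-one-theta} in Section~\ref{sec:proofs-theta-formulas}, matching barred-one permutations with Lentfer's segmented permutations via a $\DEX$- and statistic-preserving bijection.
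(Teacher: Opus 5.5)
Your outer reduction matches the paper's. You split $\coloredfunc{n}{k}{r}=\Phi^{\mathrm{unb}}_{n,k,r}+\Phi^{\mathrm{bar}}_{n,k,r}$ and apply Theorem~\ref{conj:unbarred-one-theta} at $(n,k,r)$ and at $(n,k-1,r)$. The latter has $u+1\geq1$ nonexcedance values, so it applies even when $u=0$. The shift is then absorbed through $\delta_{n,k-1,r}=\delta_{n,k,r}-u$.

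The gap is in the one substantive step, $\Phi^{\mathrm{bar}}_{n,k,r}=q^{u}\Phi^{\mathrm{unb}}_{n,k-1,r}$. Your map ``unbar the value $1$'' is not the right bijection once $k\geq2$. The letter $\overline{1}$ is the smallest barred letter, but the ordinary $1$ exceeds every barred letter. So unbarring $1$ changes its comparisons with $\overline{b_2},\dots,\overline{b_k}$ in $\pi^D$, and neither $\operatorname{DEX}$ nor a uniform alignment shift survives. For $n=k=2$, $r=u=0$: $\overline{1}\,\overline{2}$ has $\DEX{\overline{1}\,\overline{2}}=\varnothing$ and $\al{\overline{1}\,\overline{2}}=0$. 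Its image $1\,\overline{2}$ has $\DEX{1\,\overline{2}}=\{1\}$ and $\al{1\,\overline{2}}=1$, although $u=0$ requires equal alignments. In terms of admissible data, your map keeps $\eta$ and moves the entry $\eta_1$ of $\operatorname{PosWord}$ from the front of the barred block past $\eta_{b_2},\dots,\eta_{b_k}$. Hence $\inv(\operatorname{PosWord})$ changes by $\#\{i\geq2:\eta_{b_i}>\eta_1\}-\#\{i\geq2:\eta_{b_i}<\eta_1\}$, which is not constant. Also, the length change you should be aiming for is $\ell(f_\pi)-\ell(f_{\pi'})=u-r$, not $u+r$. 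Your fallback of rebuilding the Lentfer-code bijection for barred-one permutations is a plan, not an argument.

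Your remark about a ``small cyclic rearrangement'' points the right way, but it must be specified and applied to all barred values, not just to $1$. Let $1=b_1<b_2<\cdots<b_k$ be the barred values of $\pi$. Replace each letter $\overline{b_i}$ by $\overline{b_{i+1}}$ for $i<k$, and replace $\overline{b_k}$ by the unbarred $1$; for $k=1$ this is your map. The chains $\overline{b_1}<\cdots<\overline{b_k}$ and $\overline{b_2}<\cdots<\overline{b_k}<1$ both lie strictly between the primed letters and the ordinary letters $\geq2$. No excedance is created or destroyed. So this is an order-preserving relabeling of the letters of $\pi^D$, the standardization of $\pi^D$ is unchanged, and $\operatorname{DEX}$ is preserved.

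In admissible-data notation the map reads $\eta'_1=\eta_{b_k}$ and $\eta'_{b_i}=\eta_{b_{i-1}}$ for $2\leq i\leq k$. This leaves $\operatorname{PosWord}$ unchanged, while $\operatorname{ValWord}$ gains exactly $k-1$ inversions. Since $\binom{k}{2}-\binom{k-1}{2}=k-1$, \eqref{eq:signed-cr} shows the crossing number is unchanged. Theorem~\ref{thm:cr-al-complement} then gives $\al{\pi}=\al{\pi'}+\delta_{n,k,r}-\delta_{n,k-1,r}=\al{\pi'}+u$, which is the paper's Proposition~\ref{prop:barred-unbarred-relation}.

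Finally, $k=0$ needs no special convention: $\widetilde H_{(0)}=\widetilde H_{\varnothing}=1$, and $\Theta_{e_r}\Theta_{e_u}1=0$ because $r+u=n\geq1$ and $\Theta_f(1)=0$ in positive degree.
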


Proposition~\ref{prop:barred-unbarred-relation} below will show that
the barred-one part with $k$ bars is a grading shift of the
unbarred-one part with $k-1$ bars.  Consequently, when $k\geq1$ and
$u\geq1$, Theorem~\ref{conj:bicolored-theta} follows formally from
the two instances of Theorem~\ref{conj:unbarred-one-theta} with bar
parameters $k$ and $k-1$.  If $u=0$, only the instance with parameter
$k-1$ is needed; if $k=0$ and $u\geq1$, only the instance with
parameter $k$ is needed.  The remaining case $k=u=0$ is the empty
case.
Indeed, the reduced formula with bar parameter $k-1$ has
$u+1$ nonexcedance values, and
\[
\delta_{n,k-1,r}=\delta_{n,k,r}-u.
\]
Thus the factor $q^u$ in
\eqref{eq:barred-unbarred-function-relation} is absorbed by the change
in the exponent of $q^{\delta}$, which is why the second summand in
\eqref{eq:bicolored-theta-conjecture} has no additional power of $q$.

Theorems~\ref{conj:unbarred-one-theta} and \ref{conj:bicolored-theta} imply the following nontrivial result.

\begin{corollary}
The quasisymmetric functions $\Phi^{\mathrm{unb}}_{n,k,r}(\xxx;q)$, $\Phi^{\mathrm{bar}}_{n,k,r}(\xxx;q)$, and  $\coloredfunc{n}{k}{r}(\xxx;q)$ are symmetric.    
\end{corollary}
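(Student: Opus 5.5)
The corollary follows from the two theorems, together with the standard facts about the operators involved. Everything reduces to one point: the right-hand sides of \eqref{eq:unbarred-one-theta-conjecture} and \eqref{eq:bicolored-theta-conjecture} are symmetric functions with coefficients in $\QQ[q,q^{-1}]$.

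I would argue this in three steps.
\begin{itemize}
\item The modified Macdonald function $\widetilde H_{(m)}$ is a symmetric function over $\QQ(q,t)$. In fact it lies in $\QQ[q,t]\otimes\Lambda$; for a one-row partition it is independent of $t$, and its Schur expansion is $\sum_\lambda \widetilde K_{\lambda,(m)}(q,t)s_\lambda$.
\item Each Theta operator $\Theta_{e_j}$ is, by its definition in Subsection~\ref{subsec:bg-theta}, a linear operator on $\Lambda_{\QQ(q,t)}$. It is built from plethystic substitution, the Delta-type eigenoperators, and multiplication by $e_j^*$. Hence $\Theta_{e_r}\Theta_{e_{u-1}}\widetilde H_{(k+1)}$ is a symmetric function of degree $n$ with coefficients in $\QQ(q,t)$.
\item The specialization at $t=0$, the substitution $q\mapsto q^{-1}$, and the involution $\omega$ all preserve symmetry, and so does multiplication by $q^{\delta_{n,k,r}}$.
\end{itemize}
The only delicate issue is that the $t=0$ specialization in \eqref{eq:bg-grading-specialization} must be well defined, meaning the coefficients have no pole at $t=0$. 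I would settle this by appeal to the literature: the known Schur-positive, in particular polynomial, expansion of $\Theta_{e_k}\Theta_{e_\ell}\widetilde H_{(m)}$, or the combinatorial formula of \cite{MR4766722} for $\operatorname{SF}$ at $t=0$, guarantees that the specialization is a polynomial in $q$. In any case the theorems themselves assert that the specialization equals a quasisymmetric function with polynomial coefficients, so it is implicitly well defined.

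With this in hand, Theorem~\ref{conj:unbarred-one-theta} shows that $\Phi^{\mathrm{unb}}_{n,k,r}$ is symmetric whenever $u\ge1$ and $k<n$. In the remaining cases, where $u=0$ or $k=n$, the defining sum is empty or trivially symmetric. If $u=0$ the sum is empty, as noted after Definition~\ref{def:one-bar-status-parts}. If $k=n$ then $r=0$ and $u=0$, so we are again in that case.

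Theorem~\ref{conj:bicolored-theta} then shows that $\coloredfunc{n}{k}{r}$ is symmetric. The barred part is the difference
\[
\Phi^{\mathrm{bar}}_{n,k,r}=\coloredfunc{n}{k}{r}-\Phi^{\mathrm{unb}}_{n,k,r},
\]
by \eqref{eq:one-bar-status-decomposition}. A difference of symmetric functions is symmetric, so this case needs no separate argument. Alternatively, Proposition~\ref{prop:barred-unbarred-relation} exhibits $\Phi^{\mathrm{bar}}_{n,k,r}$ as $q^u\Phi^{\mathrm{unb}}_{n,k-1,r}$, which is symmetric by the unbarred case.

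The main obstacle is thus only the bookkeeping of the $t=0$ specialization, and it is settled by citing polynomiality of the Theta expressions.
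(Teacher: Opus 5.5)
Your argument is correct and follows the paper's own route: the corollary is read off from the Theta formulas of Theorems~\ref{conj:unbarred-one-theta} and \ref{conj:bicolored-theta}, with the empty cases and the barred part (via \eqref{eq:one-bar-status-decomposition} or Proposition~\ref{prop:barred-unbarred-relation}) handled just as you do. The one correction concerns your remark on the $t=0$ specialization: Schur positivity of these Theta expressions is not known (the paper leaves it open as Conjecture~\ref{conj:SchurPositive}), so justify well-definedness only by the Iraci--Nadeau--Vanden Wyngaerd expansion \eqref{eq:theta-known-smirnov-expansion}, which is exactly what the paper's proof uses.
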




The following conjecture is true assuming the Theta coinvariant Conjecture \ref{eq:CoinvConjecture} is true. 

\begin{conjecture} \label{conj:SchurPositive}
For $n\ge 1$, $0\le k\le n$, and $0\le r\le n-k$, the symmetric function $\Phi^{\mathrm{unb}}_{n,k,r}(\xxx;q)$ is Schur-positive. 
Consequently, so are $\Phi^{\mathrm{bar}}_{n,k,r}(\xxx;q)$ and $\Phi_{n,k,r}(\xxx;q)$
\end{conjecture}

\begin{remark}\label{rem:degree-normalization-in-conjectures}
The power of $q$ and the inversion of the Macdonald parameter in
\eqref{eq:unbarred-one-theta-conjecture} are consequences of the
normalization in Definition~\ref{def:al}.  Indeed,
Corollary~\ref{cor:cr-al-complement-for-pi} gives
\[
\al{\pi}
=
\delta_{n,k,r}-\cro{\pi}.
\]
Thus a crossing generating function in a variable $z$ becomes the
present generating function after the substitutions
\[
z=q^{-1}
\qquad\text{and}\qquad
F(\xxx;z)\longmapsto q^{\delta_{n,k,r}}F(\xxx;q^{-1}).
\]
Since $\coloredfunc{n}{k}{r}$ is already defined using the normalized
length $\ell(f_\pi)-r(n-k-r)$, no additional factor
$q^{-r(n-k-r)}$ occurs in either theorem.
\end{remark}

The rest of this paper is organized as follows.
Section~\ref{sec:background} reviews the necessary background.
In Section~\ref{sec:crossings-and-affine-length}, we introduce the notions of a crossing and an alignment of a bicolored permutation and relate them to affine Coxeter length. 
We then establish the relationship between the barred-one part $\Phi^{\mathrm{bar}}_{n,k,r}$ and the
unbarred-one part $\Phi^{\mathrm{unb}}_{n,k,r}$. In Section~\ref{sec:proofs-theta-formulas}, we prove the reduced
and non-reduced Theta formulas using a bijection with segmented
permutations.
In Section~\ref{sec:comparison-with-signed-statistics}, we relate
our crossings and alignments on bicolored permutations with the crossings defined by Corteel, Josuat-Verg\'es, Kim \cite{CorteelJosuatVergesKim2013} and the alignments defined by Cho and Park \cite{ChoPark2015}, with an explicit bijection on $\fB_n$ that translates the corresponding statistics.
Finally, Section~\ref{sec:future-directions} proposes the problem
of constructing natural graded algebra models for our
symmetric functions.

\section{Background}\label{sec:background}
\subsection{Bicolored permutations}\label{subsec:bg-bicoloredPermutations}

Denote $[\bar{n}]\coloneqq\{\overline{i} : i\in [n]\}$.
A \emph{bicolored permutation} $\pi$ on $[n]$ is a bijection on $[n]\cup[\bar{n}]$ satisfying $\pi(\overline{i})=\overline{\pi(i)}$ for any $i\in [n]\cup[\overline{n}]$.
For simplicity, we write $\pi_i$ as the image $\pi(i)$ and  adopt the convention that $\overline{\overline{i}}=i$. 
Since a bicolored permutation $\pi$ is solely determined by its image on $[n]$, we may write $\pi=\pi_1\pi_2\cdots\pi_n$, which is called the \emph{window notation} of $\pi$.

We say that a bicolored permutation $\pi$ has $k$ bars, denoted by  $\numcolor{\pi}=k$, if there are $k$ barred entries in its window notation.
For $i\in [n]$, we write $|\bar{i}|\coloneqq i$ and $|\pi|$ to be the permutation obtained from $\pi$ by removing all the bars. 
Let $\fB_n$ be the set of bicolored permutations on $[n]$ and $\coloredperm{n}{k}$ be the subset of those permutations with $k$ bars. 
Most of the time throughout this paper, we adopt the following total order on the bicolored integers from $1$ to $n$:
\begin{equation} \label{eq:ColorOrder}
    \overline{1}<\overline{2}<\cdots<\overline{n}<1<2<\cdots<n
\end{equation}



Under this order, we define the \emph{excedance set} $\EXC{\pi}$ and the \emph{excedance number} $\exc{\pi}$ of a bicolored permutation $\pi$ by
\[
\EXC{\pi}:=\{i\in[n]:\pi_i>i\},
\qquad
\exc{\pi}:=|\EXC{\pi}|,
\]

\begin{definition}\label{def:DEX}
For $\pi\in\fB_n$, we construct a word 
$\pi^D=\pi_1^D\pi_2^D\cdots\pi_n^D$ obtained from $\pi$ defined by
\[
    (\pi_i^D)\coloneq\begin{cases}
        \pi_i^\prime & \text{ if }i\in \EXC{\pi}\\
        \pi_i   & \text{otherwise}.
    \end{cases}
\]
So $\pi^D$ is a word over the alphabet set $[n]\cup [\bar{n}]\cup[n^\prime]$ where $[n^\prime]=\{i^\prime : i\in [n]\}$.
Extend the total order \eqref{eq:ColorOrder} to
\[
1'<2'<\cdots<n'
<\overline{1}<\overline{2}<\cdots<\overline{n}
<1<2<\cdots<n.
\]
and define, under this order, the set statistic
\[
\DEX{\pi}:=\DES{\pi^D}=\{i\in [n-1]: \pi_i^D>\pi_{i+1}^D\}.
\]
\end{definition}
\begin{remark}
The set of permutations $\fS_n$ can be viewed as a subset of $\fB_n$. 
Restrict Definition \ref{def:DEX} from $\fB_n$ to $\fS_n$ recovers the original definition of $\mathrm{DEX}$ by Shareshian and Wachs in \cite{ShareshianWachs2010}. 
\end{remark}

A letter $\pi_i^D$ is called \emph{barred}, \emph{primed}, or
\emph{ordinary} according as
\[
\pi_i^D=\overline{\sigma_i},\qquad
\pi_i^D=\sigma_i^\prime,\qquad\text{or}\qquad
\pi_i^D=\sigma_i,
\]
respectively. By a slight abuse of terminology, we also say that the
value $\sigma_i$ is barred, primed, or ordinary, respectively.

\begin{remark}
Another common convention for bicolored permutations is to interpret the bar as a negative sign, identifying $\overline{i}$ with $-i$ for each $i\in[n]$. Under this convention, we regard $[n]\cup[\overline{n}]$ as a subset of $\ZZ$ equipped with the total order
\begin{equation} \label{eq:SignOrder}
    \overline{n}<\cdots<\overline{2}<\overline{1}<1<2<\ldots<n.
\end{equation}
We may call this the \emph{sign order}, and call \eqref{eq:ColorOrder} the \emph{color order}.
It seems to us that the color order is more natural when regarding the hyperoctahedral group $\fB_n$ as a wreath product, whereas the sign order is more natural when regarding $\fB_n$ as a Coxeter group. 
\end{remark}



\subsection{Affine permutations and Coxeter length}
\label{subsec:bg-affine}

Fix $n\geq1$. An \emph{affine permutation} of period $n$
is a bijection $f:\mathbb Z\to\mathbb Z$ satisfying
\[
f(i+n)=f(i)+n\qquad(i\in\mathbb Z).
\]
Such a bijection is uniquely determined by its values on $[n]$, so we write $f=[f(1),\ldots,f(n)]$, called the \emph{window notation} of $f$. Conversely,
an integer window defines an affine permutation of period $n$ if and only if its entries represent all congruence classes modulo $n$. 

The \emph{affine symmetric group} $\dschub_n$ is the group of all affine permutations of period $n$, with composition as the group operation. 
The \emph{displacement map} $\operatorname{disp}:\dschub_n\rightarrow \ZZ$, defined by
\begin{equation}\label{eq:bg-affine-displacement}
\operatorname{disp}(f)
:=\frac1n\sum_{i=1}^{n}(f(i)-i),
\end{equation}
is a group homomorphism. 
For each $a\in\ZZ$, let 
$\dschub_n^a=\{f\in\dschub_n:\operatorname{disp}(f)=a\}$.
In particular, $\widetilde{S}_n^0=\ker(\operatorname{disp})$ is a normal subgroup of $\widetilde{S}_n$.
Moreover, $\widetilde{S}_n^a=\zeta_a \widetilde{S}_n^0$, where $\zeta_a\in\dschub_n$ is the translation defined by $\zeta_a(i)=i+a$ for all $a\in\ZZ$.
Thus each $\widetilde{S}_n^a$ is a coset of $\widetilde{S}_n^0$, .

For $n\geq2$, the subgroup $\dschub_n^0$ is the affine
Coxeter group of type $\widetilde A_{n-1}$, generated by the
periodic adjacent transpositions: for each $i\in\{0,\ldots,n-1\}$,
the corresponding generator interchanges $i+bn$ and $i+1+bn$ for
every $b\in\mathbb Z$, fixing all other integers.
The \emph{Coxeter length} of an element $f\in\dschub_n^0$, denoted by $\ell(f)$, is the
minimum number of these generators required to express $f$. See \cite[Section~3.2]{KnutsonLamSpeyer2013} and
\cite[Section~6.2]{speyer2024richardsonvarietiesprojectedrichardson}.

Since $\operatorname{disp}(\zeta_a\circ f)=a+\operatorname{disp}(f)$ for all $a\in \ZZ$ and $f\in\dschub_n$, 
we define the \emph{affine length} of $f\in\dschub_n$, for $n\ge 2$, by
\[
\ell(f)
:=\ell\bigl(\zeta_{-\operatorname{disp}(f)}\circ f\bigr),
\]
where the length on the right-hand side is the Coxeter length in $\dschub_n^0$.
For $n=1$ all elements are global shifts; define their length to
be zero.
Equivalently, $\ell(f)$ is the number of equivalence classes of
pairs $(i,j)$ satisfying $i<j$ and $f(i)>f(j)$, under simultaneous
translation of both indices by $n$. Each class has a unique
representative whose first index is in $[n]$, so
\begin{equation}\label{eq:bg-affine-inversions}
\ell(f)
=\#\{(i,j)\in[n]\times\ZZ\,:\,i<j,\ f(i)>f(j)\}.
\end{equation}
Note that here $j$ is not restricted to $[n]$. This set is finite because
$f(i)-i$ is periodic and hence bounded, see \cite[Section~3.2]{KnutsonLamSpeyer2013}.
An alternative formula for the affine length is
\begin{equation}\label{eq:bg-affine-window-length}
\ell(f)
=\sum_{1\leq i<j\leq n}
\left|\left\lfloor\frac{f(j)-f(i)}{n}\right\rfloor\right|,
\end{equation}
see \cite[Proposition 3.1]{MR1392503}.

One can also embed $\fS_n$ in $\dschub_n$ by extending an ordinary permutation $w\in\symm_n$ periodically, then 
\eqref{eq:bg-affine-window-length} is equal to the inversion number of $w$, $\mathrm{inv}(w)\coloneqq\#\{(i,j)\in[n]^2: i<j,~w_i>w_j\}$, agrees with the usual Coxeter length for $\fS_n$.
Adding the same integer to all window entries leaves the length
unchanged, so $\ell(\zeta_a\circ f)=\ell(f)$ and
$\ell(\zeta_a)=0$. When $n=1$, this agrees with the empty sum in
\eqref{eq:bg-affine-window-length}.

\subsection{Combinatorics}\label{subsec:bg-combinatorics}

For an integer $n\geq0$, write $[n]=\{1,\ldots,n\}$, with
$[0]=\varnothing$. A \emph{partition} of $n$ is a weakly decreasing sequence
$\lambda=(\lambda_1,\ldots,\lambda_d)$ of positive integers with
$|\lambda|:=\lambda_1+\cdots+\lambda_d=n$; we write $\lambda\vdash n$.
The unique partition of zero is the empty partition $\varnothing$.
We use \emph{English notation} for the \emph{Young diagram}
\[
\{(i,j):1\leq i\leq d,\ 1\leq j\leq\lambda_i\}
\]
of $\lambda$, whose rows (the first coordinate $i$) are numbered from top to bottom and columns (the second coordinate $j$) from left to
right. The \emph{conjugate partition} is given by
$\lambda'_j=|\{i:\lambda_i\geq j\}|$. For partitions $\lambda,\mu$ of
the same integer, write $\mu\unrhd\lambda$ if
\[
\sum_{i=1}^{a}\mu_i\geq\sum_{i=1}^{a}\lambda_i
\qquad\text{for every }a\geq1,
\]
where missing parts are taken to be zero (i.e. for $\lambda=(\lambda_1,\dots,\lambda_d)$ and $i>d$ we have $\lambda_i=0$). This is the \emph{dominance order}.

A \emph{semistandard Young tableau} of shape $\lambda$ is a filling
of its diagram by positive integers that weakly increase along rows
and strictly increase down columns. A \emph{standard Young tableau}
of shape $\lambda\vdash n$ is a semistandard Young tableau containing each element of $[n]$ exactly
once, so these elements strictly increase along both rows and columns. Denote
these sets by $\SSYT(\lambda)$ and $\SYT(\lambda)$, respectively.
For $\lambda\vdash n$ with $n\geq1$ and $T\in\SYT(\lambda)$, define
\[
\operatorname{Des}(T)
:=\{i\in[n-1]:\text{$i+1$ lies in a lower row than $i$ in $T$}\}.
\]
Transposing the diagram and its filling gives a tableau
$T^{\mathsf t}\in\SYT(\lambda')$.
For the unique empty tableau, put $\operatorname{Des}(T)=\varnothing$.

Let $w=w_1\cdots w_n$ be a word with pairwise distinct letters in a totally ordered alphabet.
Its \emph{inversion number} is
\begin{equation}\label{eq:bg-word-inversions}
\inv(w):=
\bigl|\{(i,j):1\leq i<j\leq n,\ w_i>w_j\}\bigr|.
\end{equation}
Its
\emph{standardization} is the permutation
$\operatorname{std}(w)\in\symm_n$ obtained by labeling its letters
with $1,\ldots,n$ in increasing order. Explicitly,
\[
\operatorname{std}(w)_i
=\bigl|\{j\in[n]:w_j\le w_i\}\bigr|.
\]
In particular, standardization preserves inversions and descents: we have
$\inv(\operatorname{std}(w))=\inv(w)$ and
$\DES{\operatorname{std}(w)}=\DES{w}$. For a permutation $w\in\symm_n$, we have $\inv(w)=\inv(w^{-1})$: the map
$(i,j)\mapsto(w_j,w_i)$ bijects the inversion pairs of $w$ with
those of $w^{-1}$.

\subsection{Quasisymmetric functions}\label{subsec:bg-quasisymmetric}

Let $\xxx=(x_1,x_2,\ldots)$ be commuting indeterminates. A formal
power series of bounded total degree over the ground field $\mathbb{Q}(q,t)$ is
\emph{quasisymmetric} if, for every sequence
$\alpha=(\alpha_1,\ldots,\alpha_d)$ of positive integers, the
coefficient of
$x_{i_1}^{\alpha_1}\cdots x_{i_d}^{\alpha_d}$ is independent of the
choice of $i_1<\cdots<i_d$. Write
\[
\operatorname{QSym}
=\bigoplus_{n\geq0}\operatorname{QSym}_n
\]
for the graded algebra of quasisymmetric functions, where
$\operatorname{QSym}_n$ is its homogeneous degree-$n$ component.

For $n\geq1$ and $S\subseteq[n-1]$, the \emph{fundamental quasisymmetric
function}, in the convention of this paper, is
\begin{equation}\label{eq:bg-fundamental}
\FF_{S,n}(\xxx)
:=\sum_{\substack{1\leq i_1\leq\cdots\leq i_n\\
                   a\in S\ \Longrightarrow\ i_a<i_{a+1}}}
x_{i_1}\cdots x_{i_n}.
\end{equation}
The functions $\FF_{S,n}$, as $S$ ranges over all subsets of
$[n-1]$, form a basis of $\operatorname{QSym}_n$; see
\cite{Gessel1984}. In degree zero we use the basis element $1$. 
We use increasing indices; the decreasing-index convention of
\cite{ShareshianWachs2010,LerayPerm2025} instead defines
\[
\FF^{\downarrow}_{S,n}(\xxx)=
\sum_{\substack{i_1\geq\cdots\geq i_n\geq1\\
j\in S\Rightarrow i_j>i_{j+1}}}
x_{i_1}\cdots x_{i_n}.
\]
Reversing the index sequence gives
$\FF_{S,n}=\FF^{\downarrow}_{n-S,n}$, where
$n-S\coloneqq\{n-j:j\in S\}$. This convention difference does not matter as both $Q_n(\xxx;t)$ and $Q_{n}^{\mathsf{lab}}(\xxx;t,y)$ are symmetric functions; see the next paragraph for details.

A quasisymmetric function is \emph{symmetric} if it is invariant
under every finite permutation of the variables $\xxx$. For a homogeneous symmetric function $f$ of degree $n$, the two
conventions give the same fundamental expansion coefficients:
\[
f=\sum_{S\subseteq[n-1]}c_S\FF_{S,n}
\quad\Longleftrightarrow\quad
f=\sum_{S\subseteq[n-1]}c_S\FF^{\downarrow}_{S,n}.
\]
Indeed, after specializing to $x_1,x_2,\dots,x_N$, reversing the variables
fixes every symmetric polynomial and interchanges $\FF_{S,n}$ with
$\FF^{\downarrow}_{S,n}$. The assertion follows since this holds for
every $N$.

\subsection{Symmetric functions}\label{subsec:bg-symmetric}

A quasisymmetric function is \emph{symmetric} if it is invariant
under every permutation of finitely many variables in $\xxx$. Let
$\Lambda\subseteq\operatorname{QSym}$ be the subalgebra of symmetric
functions, with homogeneous components $\Lambda_n$.
For $d\geq1$, the \emph{elementary, complete homogeneous, and power-sum
symmetric functions} are, respectively,
\begin{align*}
e_d(\xxx):=\sum_{i_1<\cdots<i_d}x_{i_1}\cdots x_{i_d},\quad
h_d(\xxx):=\sum_{i_1\leq\cdots\leq i_d}x_{i_1}\cdots x_{i_d},\quad
p_d(\xxx):=\sum_{i\geq1}x_i^d.
\end{align*}
Here the indices in the first two sums are positive, and
$e_0=h_0=1$. These three families of symmetric functions are all algebraically independent, respectively, and we have $\Lambda=\mathbb Q(q,t)[p_1,p_2,\ldots]=\QQ(q,t)[e_1,e_2,\ldots]=\QQ(q,t)[h_1,h_2,\ldots]$.

For a partition $\lambda$, its \emph{Schur function} is
\begin{equation}\label{eq:bg-schur-tableaux}
s_\lambda(\xxx)
:=\sum_{T\in\SSYT(\lambda)}
\prod_{(i,j)\in\lambda}x_{T(i,j)}.
\end{equation}
The Schur functions indexed by partitions of $n$ form a basis of
$\Lambda_n$. We use the \emph{Hall scalar product}, characterized by
$\langle s_\lambda,s_\mu\rangle=\delta_{\lambda,\mu}$, and extend it
bilinearly over any coefficient field in use. Schur functions have the fundamental quasisymmetric function expansion
\begin{equation}\label{eq:bg-schur-fundamental}
s_\lambda(\xxx)
=\sum_{T\in\SYT(\lambda)}
\FF_{\operatorname{Des}(T),n}(\xxx)
\qquad(\lambda\vdash n,\ n\geq1).
\end{equation}
These conventions are standard; see \cite{Macdonald1995,Gessel1984}.

\subsection{The involutions
\texorpdfstring{$\Omega_n$ and $\omega$}{Omega n and omega}}
\label{subsec:bg-involutions}

The standard involution $\omega$ of $\Lambda$ is the algebra
automorphism satisfying $\omega(e_d)=h_d$. Equivalently,
\[
\omega(p_d)=(-1)^{d-1}p_d,
\qquad
\omega(s_\lambda)=s_{\lambda'}.
\]
It fixes the scalar coefficients, including all grading and
parameters $q,t$.

For $n\geq1$, define the linear involution $\Omega_n$ of
$\operatorname{QSym}_n$ by
\begin{equation}\label{eq:bg-omega-qsim}
\Omega_n(\FF_{S,n})
:=\FF_{[n-1]\setminus S,n},
\end{equation}
and put $\Omega_0(1)=1$. Its restriction to $\Lambda_n$ equals
$\omega$. To verify this, observe that
\begin{equation}\label{eq:bg-tableau-transpose-descents}
\operatorname{Des}(T^{\mathsf t})
=[n-1]\setminus\operatorname{Des}(T)
\end{equation}
for all standard Young tableaux $T$.
Indeed, if the entry $i+1$ were strictly southeast of $i$, the cell
in the row of $i$ and the column of $i+1$ would contain an integer
strictly between $i$ and $i+1$, which is impossible. The increasing conditions also rule out
$i+1$ being weakly northwest of $i$. Consequently, exactly one of
the following holds: $i+1$ is in a lower row than $i$, or it is in
a column strictly to the right of $i$. Transposition exchanges
these two conditions, proving
\eqref{eq:bg-tableau-transpose-descents}. Thus
\eqref{eq:bg-schur-fundamental} yields
\[
\Omega_n(s_\lambda)
=\sum_{T\in\SYT(\lambda)}
  \FF_{\operatorname{Des}(T^{\mathsf t}),n}
=s_{\lambda'}
=\omega(s_\lambda).
\]
Since the Schur functions form a linear basis of $\Lambda$, the claimed restriction
follows. In particular, if
$G=\sum_{S\subseteq[n-1]}c_S\FF_{S,n}$ is symmetric, then
\begin{equation}\label{eq:bg-complement-symmetric}
\omega(G)
=\sum_{S\subseteq[n-1]}c_S
  \FF_{[n-1]\setminus S,n}.
\end{equation}
For a general quasisymmetric $G$, the right-hand side instead
defines $\Omega_n(G)$; the identification with $\omega$ requires
symmetry.

\subsection{Modified Macdonald symmetric functions and Theta operators}\label{subsec:bg-theta}

For symmetric functions $G,A=A(\xxx;q,t)\in\Lambda$, the \emph{plethystic substitution} $G[A]$ is defined by expressing $G$ in
the algebraically independent power sums and replacing each $p_d$
by $p_d[A]\coloneqq A(x_1^d,x_2^d,\ldots;q^d,t^d)$, while leaving its coefficients unchanged. In particular, the
substitutions needed here are determined, for $d\geq1$, by
\begin{align*}
p_d[(1-q)\xxx]&=(1-q^d)p_d(\xxx),\\
p_d[(1-t)\xxx]&=(1-t^d)p_d(\xxx),\\
p_d\!\left[\frac{\xxx}{(1-q)(1-t)}\right]
&=\frac{p_d(\xxx)}{(1-q^d)(1-t^d)},
\end{align*}
where we abuse notation by writing $\xxx=\sum_{i\ge 1}x_i$.

For $\lambda\vdash n\geq1$, the \emph{modified Macdonald symmetric
function} $\widetilde H_\lambda=\widetilde H_\lambda(\xxx;q,t)$ is
uniquely characterized by
\begin{align}
\widetilde H_\lambda[(1-q)\xxx]
&\in\operatorname{span}_{\mathbb Q(q,t)}
  \{s_\mu:\mu\vdash n,\ \mu\unrhd\lambda\},\notag\\
\widetilde H_\lambda[(1-t)\xxx]
&\in\operatorname{span}_{\mathbb Q(q,t)}
  \{s_\mu:\mu\vdash n,\ \mu\unrhd\lambda'\},\notag\\
\langle\widetilde H_\lambda,s_{(n)}\rangle&=1.
\label{eq:bg-modified-macdonald}
\end{align}
Set $\widetilde H_\varnothing=1$. These functions form a basis of
$\Lambda$; see \cite{HaglundHaimanLoehr2005}.

Define an invertible operator $\Pi$ on the positive-degree part of
$\Lambda$ by
\begin{equation}\label{eq:bg-pi-operator}
\Pi\widetilde H_\lambda
:=\left(
\prod_{\substack{(i,j)\in\lambda\\(i,j)\neq(1,1)}}
(1-q^{j-1}t^{i-1})
\right)\widetilde H_\lambda
\qquad(\lambda\neq\varnothing).
\end{equation}
The eigenvalues in \eqref{eq:bg-pi-operator} are nonzero in
$\mathbb Q(q,t)$.

For homogeneous $f\in\Lambda$ of degree $d$ and homogeneous
$G\in\Lambda$ of degree $n$, define
\begin{equation}\label{eq:bg-theta-definition}
\Theta_fG:=
\begin{cases}
0,&n=0,\ d>0,\\[2pt]
fG,&n=d=0,\\[2pt]
\displaystyle
\Pi\!\left(
 f\!\left[\frac{\xxx}{(1-q)(1-t)}\right]\Pi^{-1}G
\right),&n>0,
\end{cases}
\end{equation}
and extend this definition linearly in both arguments $f,G\in\Lambda$. This is the
\emph{Theta operator} of \cite[Section~3]{DAdderioIraciVandenWyngaerd2021},
including its degree-zero convention. It raises degree by $d$,
$\Theta_{e_0}=\operatorname{id}$, and $\Theta_f(1)=0$ when $d>0$.
If $G$ has positive degree, the conjugation formula gives commutativity of Theta operators
\begin{align}\label{eq:theta-operator-commutativity}
\Theta_f\Theta_gG
=\Pi\!\left(
 f\!\left[\frac{\xxx}{(1-q)(1-t)}\right]
 g\!\left[\frac{\xxx}{(1-q)(1-t)}\right]\Pi^{-1}G
\right)
=\Theta_g\Theta_fG.
\end{align}

Specialization is coefficientwise, \emph{after} forming the complete
Theta expression.
To distinguish the Macdonald parameter from the external grading
variable, for a fresh indeterminate $z$ set
\begin{equation}\label{eq:bg-grading-specialization}
\left.G\right|_{t=0,\,q=q^{-1}}
:=
\left.\left(\left.G\right|_{t=0,\,q=z}\right)\right|_{z=q^{-1}}.
\end{equation}

\section{Crossings and normalized affine length}
\label{sec:crossings-and-affine-length}

In this section, we define some statistics and show some identities about them.

We start by defining a way to associate each bicolored permutation $\pi\in\fB_n$ with an affine permutation $f_\pi\in\dschub_n$.

For each $\pi\in\fB_n$, we define the corresponding affine permutation $f_\pi$ by setting its window notation $[f_\pi(1),f_\pi(2),\ldots,f_\pi(n)]$ to be
\[
    f_{\pi}(i)\coloneqq\begin{cases}
        \pi_i-2n & \text{ if }\pi_i>i\\
        |\pi_i|-n & \text{ if $\pi_i$ is barred}\\
        \pi_i   & \text{ otherwise}
    \end{cases}.
\]
Equivalently, the window notation can be obtained from the word $\pi^D$ in Definition~\ref{def:DEX} by making the
replacements
\begin{equation}\label{eq:intro-affine-replacements}
j'\longmapsto j-2n,
\qquad
\overline{j}\longmapsto j-n,
\qquad
j\longmapsto j.
\end{equation}
This correspondence forms a bijection 
\[
    F: \fB_n\longrightarrow \left\{f\in\dschub_n~:~i-2n<f(i)\le i \text{ for all }i\right\} \qquad \text{given by}\qquad  \pi\mapsto f_\pi,
\]
satisfying
\begin{itemize}
    \item[(i)] if $\nega{\pi}=k,~ \exc{\pi}=r$, then $\operatorname{disp}(f_{\pi})=-(2r+k)$;
    \item[(ii)] $\DEX{\pi}=\DES{\left.f_\pi\right|_{[n]}}\coloneqq\left\{i\in [n-1]~:~f_\pi(i)>f_{\pi}(i+1)\right\}$.
\end{itemize}

\begin{remark}
In the literature, the statistics $2\cdot\exc{\pi}+\nega{\pi}$ is known as the \emph{flag excedance number} $\operatorname{fexc}(\pi)$ of $\pi$.
It is one of the flag Eulerian statistics introduced in order to extend the Eulerian statistics in $\fS_n$ to $\fB_n$, or more generally to the $r$-colored permutation group $\ZZ_r\wr\fS_n$. 
\end{remark}



Recall from the introduction, given the bijection $F$, for $\pi\in\coloredperm{n}{k}$ with $\exc{\pi}=r$, the alignment number of $\pi$ is
\[
    \al{\pi}=\ell(f_{\pi})-r(n-k-r).
\]
For any affine permutation $f$ in the image of $F$, we define the Level sets of $f$ as follows:
\begin{align*}
    &\operatorname{Lev}_{-1}(f)\coloneqq\{i\in [n]~:~-n<f(i)\le 0\},\\
    &\operatorname{Lev}_{-2}(f)\coloneqq \{i\in [n]~:~i-2n<f(i)\le -n\}.
\end{align*}

Then our refinement of the bicolored Eulerian quasisymmetric function $\coloredfunc{n}{k}{r}(\xxx;q)$ can be interpreted solely in terms of affine permutations.

\begin{proposition}
For $n\geq1$, $0\leq k\leq n$, and $0\leq r\leq n-k$,
\[
    \coloredfunc{n}{k}{r}(\xxx;q)=\sum_{f} q^{\ell(f)-r(n-k-r)}\FF_{\DES{\left.f\right|_{[n]}},n}(\xxx),
\]
where the sum runs over $f\in\dschub_n$ satisfying $i-2n<f(i)\le i$ for all $i$, $|\operatorname{Lev}_{-1}(f)|=k$, and $|\operatorname{Lev}_{-2}(f)|=r$.
\end{proposition}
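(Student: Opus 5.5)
The proposition is a reindexing of Definition~\ref{def:bicolored-eulerian-qsf} along the bijection $F$. The plan is to substitute $f=f_\pi$ termwise, after checking three things:
\begin{itemize}
\item[(a)] $F$ is a bijection from $\fB_n$ onto $\{f\in\dschub_n : i-2n<f(i)\le i\}$;
\item[(b)] under $F$, $\nega{\pi}=|\operatorname{Lev}_{-1}(f_\pi)|$ and $\exc{\pi}=|\operatorname{Lev}_{-2}(f_\pi)|$;
\item[(c)] $\DEX{\pi}=\DES{\left.f_\pi\right|_{[n]}}$, which is property (ii).
\end{itemize}
Given these, the sum defining $\coloredfunc{n}{k}{r}(\xxx;q)$ over $\pi\in\coloredperm{n}{k}$ with $\exc{\pi}=r$ becomes the stated sum over $f$. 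Each term $q^{\al{\pi}}\FF_{\DEX{\pi},n}$ becomes $q^{\ell(f)-r(n-k-r)}\FF_{\DES{f|_{[n]}},n}$ by Definition~\ref{def:al}.

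\textbf{Step (b).} Read off the three cases defining $f_\pi(i)$.
\begin{itemize}
\item If $\pi_i$ is barred, then $f_\pi(i)=|\pi_i|-n\in(-n,0]$, so $i\in\operatorname{Lev}_{-1}$.
\item If $\pi_i>i$ is unbarred (an excedance; in the color order barred values never exceed $i$), then $f_\pi(i)=\pi_i-2n\in(i-2n,-n]$, so $i\in\operatorname{Lev}_{-2}$.
\item Otherwise $\pi_i$ is unbarred with $\pi_i\le i$, so $f_\pi(i)=\pi_i\in[1,i]$.
\end{itemize}
The three ranges are disjoint, so the level sets count exactly the bars and the excedances.

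\textbf{Step (c).} The replacement rule \eqref{eq:intro-affine-replacements} sends the ordered alphabet $1'<\cdots<n'<\overline{1}<\cdots<\overline{n}<1<\cdots<n$ order-preservingly into the disjoint integer intervals $(-2n,-n]$, $(-n,0]$, $(0,n]$. Hence descents of $\pi^D$ and of the window agree.

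\textbf{Step (a), the main check.} First, the window of $f_\pi$ hits every residue class modulo $n$ exactly once, because $f_\pi(i)\equiv|\pi_i|\pmod n$ and $|\pi|$ is a permutation. So $f_\pi\in\dschub_n$, and the bounds $i-2n<f_\pi(i)\le i$ come from the case analysis in (b).

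For surjectivity, take $f$ with $i-2n<f(i)\le i$. Each $f(i)$ lies in exactly one of $(-2n,-n]$, $(-n,0]$, $(0,i]$. Invert the replacement rule: set $\pi_i=f(i)+2n$ unbarred, or $\overline{f(i)+n}$, or $f(i)$, respectively.
\begin{itemize}
\item In the first case, $f(i)+2n>i$ because $f(i)>i-2n$, so it is indeed an excedance, and $f(i)+2n\le n$.
\item In the third case, $\pi_i=f(i)\le i$, so it is not an excedance.
\end{itemize}
The values $|\pi_i|$ are distinct because the residues are, so $\pi\in\fB_n$ and $F(\pi)=f$. Injectivity is immediate since the three cases are determined by the range of $f(i)$.

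The only care needed is the boundary inequalities: strictness at $i-2n$ and the fact that barred entries are never excedances in the color order. Beyond that, the proof is bookkeeping.
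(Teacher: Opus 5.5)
Your proposal is correct and follows the paper's approach: reindex the defining sum of $\coloredfunc{n}{k}{r}$ along the bijection $F$, using property (ii) and Definition~\ref{def:al}. The paper states this proposition without a written proof and simply asserts that $F$ is a bijection. Your checks (a) and (b) — that $F$ is a bijection onto the bounded window set, and that $\operatorname{Lev}_{-1}$ and $\operatorname{Lev}_{-2}$ count bars and excedances — fill in details the paper leaves implicit.
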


Next, we will give a direct combinatorial interpretation of the alignment number $\al{\pi}$ for $\pi\in\fB_n$ that justifies the name of this statistic.

We fix integers $n\geq 1$, $0\leq k\leq n$, and $0\leq r\leq n-k$, and
put
\[
u:=n-k-r
\]
throughout this section. For a proposition $\mathcal{Q}$, let $\one_{\mathcal{Q}}$ be $1$ if
$\mathcal{Q}$ is true and $0$ otherwise.

\begin{definition}\label{def:admissible-inverse-data}
Let $\operatorname{Adm}_{n,k,r}$ be the set of pairs $(\eta,C)$ with
the following properties:
\begin{enumerate}
    \item $\eta\in\fS_n$;
    \item $C=C_1C_2\cdots C_n$ is a word in the alphabet
    $\{\mathsf{P},\mathsf{B},\mathsf{N}\}$ containing exactly $r$
    letters $\mathsf{P}$, exactly $k$ letters $\mathsf{B}$, and exactly
    $u$ letters $\mathsf{N}$;
    \item for every $i\in[n]$,
    \[
    C_i=\mathsf{P}\Longrightarrow\eta_i<i,
    \qquad
    C_i=\mathsf{N}\Longrightarrow\eta_i\geq i.
    \]
    No inequality is imposed on $\eta_i$ when $C_i=\mathsf{B}$.
\end{enumerate}
For such a word $C$, define the three sets of values
\begin{align*}
\operatorname{Prim}(C)&:=\{i\in[n]:C_i=\mathsf{P}\},\\
\operatorname{Bar}(C)&:=\{i\in[n]:C_i=\mathsf{B}\},\\
\operatorname{NonExc}(C)&:=\{i\in[n]:C_i=\mathsf{N}\}.
\end{align*}
Thus these three sets form a disjoint union of $[n]$ and have
cardinalities $r$, $k$, and $u$, respectively.
\end{definition}

The letters $\mathsf{P}$, $\mathsf{B}$, and $\mathsf{N}$ stand for
\emph{primed}, \emph{barred}, and \emph{nonexcedance}, respectively. We say \emph{``$i$ is of type $\mathsf{T}$''} ($\mathsf{T}\in\mathsf{P},\mathsf{B},\mathsf{N}$) if $C_i=\mathsf{T}$.
Notice that the index $i$ in $C_i$ will correspond to a value, not a position in the
one-line notation of a bicolored permutation (see the bijection below).

We say that ``$(\eta,C)$ is \emph{admissible}'' if it satisfies (1) and (3) in Definition~\ref{def:admissible-inverse-data}.

\begin{proposition}\label{prop:bicolored-admissible-bijection}
There is a bijection
\[
\left\{\pi\in\coloredperm{n}{k}:\exc{\pi}=r\right\}
\longrightarrow \operatorname{Adm}_{n,k,r}.
\]
\end{proposition}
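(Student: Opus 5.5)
We construct the bijection by passing to the inverse permutation, which is why the definition calls $C$ "inverse data" and indexes $C$ by values. Given $\pi\in\coloredperm{n}{k}$ with $\exc{\pi}=r$, let $\sigma=|\pi|\in\fS_n$ and set $\eta:=\sigma^{-1}$. Define $C$ value by value. For each $j\in[n]$ let $i=\eta_j$ be the position with $|\pi_i|=j$. Put $C_j=\mathsf{B}$ if $\pi_i=\overline{j}$ is barred. Put $C_j=\mathsf{P}$ if $\pi_i=j$ is unbarred and $j>i$, that is, $i\in\EXC{\pi}$, so that $\pi^D_i=j'$ is primed. Otherwise put $C_j=\mathsf{N}$, that is, $\pi_i=j$ is unbarred with $j\le i$. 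In the color order \eqref{eq:ColorOrder} a barred entry is smaller than every unbarred index, so barred positions are never excedances. Hence the excedances are exactly the unbarred positions with $\pi_i>i$, and $C$ has exactly $r$ letters $\mathsf{P}$, $k$ letters $\mathsf{B}$, and $u=n-k-r$ letters $\mathsf{N}$.

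Next we verify condition (3) of Definition~\ref{def:admissible-inverse-data}. If $C_j=\mathsf{P}$, then $j=\pi_i>i=\eta_j$, so $\eta_j<j$. If $C_j=\mathsf{N}$, then $j\le i$, so $\eta_j\ge j$. When $C_j=\mathsf{B}$ no condition is needed, and indeed there is none, since barred entries may sit at any position. Thus $(\eta,C)\in\operatorname{Adm}_{n,k,r}$.

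For the inverse map, start from $(\eta,C)\in\operatorname{Adm}_{n,k,r}$. For each value $j$, place at position $\eta_j$ the letter $\overline{j}$ if $C_j=\mathsf{B}$ and $j$ otherwise. Since $\eta$ is a bijection, this is the window of a bicolored permutation with $k$ bars. The admissibility inequalities say that a value of type $\mathsf{P}$ sits strictly left of its own index, which makes it an excedance. A value of type $\mathsf{N}$ sits weakly right, which makes it a non-excedance, and barred values are never excedances. So the excedance number is exactly $r$. The two maps are visibly mutually inverse, since each records the same positional data and bar data and the type letter is determined by them.

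There is no real obstacle here. The only point requiring care is the observation that in the color order, barred letters are never excedances. This is what makes the $\mathsf{B}$ letters unconstrained and forces the count of $\mathsf{P}$ letters to equal $\exc{\pi}$. The "$\ge$" in the $\mathsf{N}$ condition accounts for fixed points. One could additionally record that, under $F$, the sets $\operatorname{Prim}(C)$, $\operatorname{Bar}(C)$ and $\operatorname{NonExc}(C)$ correspond to the values lying in $\operatorname{Lev}_{-2}$, $\operatorname{Lev}_{-1}$ and level $0$ of $f_\pi$.
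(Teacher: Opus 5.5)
Your proof is correct and follows the paper's argument: the same map $\pi\mapsto(\sigma^{-1},C)$, with $C_j$ recording whether $j$ is barred and otherwise whether $\eta_j<j$, the same letter count using the fact that barred entries are never excedances in the color order, and the same explicit inverse. One small correction to your closing aside: $\operatorname{Lev}_{-2}(f_\pi)$ and $\operatorname{Lev}_{-1}(f_\pi)$ are sets of positions, not values, so the precise statement is $\{\eta_v: v\in\operatorname{Prim}(C)\}=\operatorname{Lev}_{-2}(f_\pi)$ and $\{\eta_b: b\in\operatorname{Bar}(C)\}=\operatorname{Lev}_{-1}(f_\pi)$.
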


\begin{proof}
Let $\pi\in\coloredperm{n}{k}$ satisfy $\exc{\pi}=r$.  Let
$\sigma=\sigma_1\cdots\sigma_n\in\fS_n$ be the ordinary
permutation obtained from $\pi$ by erasing all bars, and define the set
of barred values by
\[
\operatorname{BarVal}(\pi)
:=\{\sigma_j:j\in[n]\text{ and }\pi_j=\overline{\sigma_j}\}.
\]
In particular, $\operatorname{BarVal}(\pi)$ is a set of values; it is
not the set of positions at which bars occur.  Set
\[
\eta:=\sigma^{-1}.
\]
Thus, for every $i\in[n]$, the integer $\eta_i$ is the unique position
occupied by the value $i$ in $\sigma$, so that
\[
\sigma_{\eta_i}=i.
\]
Define $C=C_1\cdots C_n$ by
\begin{align}
C_i=
\begin{cases}
\mathsf{B},&i\in\operatorname{BarVal}(\pi),\\
\mathsf{P},&i\notin\operatorname{BarVal}(\pi)\text{ and }\eta_i<i,\\
\mathsf{N},&i\notin\operatorname{BarVal}(\pi)\text{ and }\eta_i\geq i.
\end{cases}
\label{eq:color-word-from-pi}
\end{align}
There are exactly $k$ letters $\mathsf{B}$ in $C$.  If an unbarred
value $i$ occurs in position $j$, then $j=\eta_i$, and this occurrence
is an excedance precisely when
\[
i>j
\quad\Longleftrightarrow\quad
i>\eta_i
\quad\Longleftrightarrow\quad
C_i=\mathsf{P}.
\]
It follows that $C$ has exactly $r$ letters $\mathsf{P}$.  Its remaining
$n-k-r=u$ letters are $\mathsf{N}$.  The inequalities required in
Definition~\ref{def:admissible-inverse-data} follow directly from
\eqref{eq:color-word-from-pi}.  Hence $(\eta,C)$ belongs to
$\operatorname{Adm}_{n,k,r}$.

Conversely, let $(\eta,C)\in\operatorname{Adm}_{n,k,r}$ and put
$\sigma:=\eta^{-1}$.  Starting from the one-line notation
$\sigma_1\cdots\sigma_n$, bar the entry $\sigma_j$ if and only if
\[
C_{\sigma_j}=\mathsf{B}.
\]
This gives a bicolored permutation $\pi$ with exactly $k$ barred
entries.  If $C_i=\mathsf{P}$, then the value $i$ occurs in position
$\eta_i<i$ and is unbarred, so it gives an excedance of $\pi$.  If
$C_i=\mathsf{N}$, then $i$ is unbarred and occurs in position
$\eta_i\geq i$, so it is not an excedance.  Values of type $\mathsf{B}$
are barred and hence are not counted by $\EXC{\pi}$.  Therefore
$\exc{\pi}=r$.

Starting with $\pi$ and yielding $(\eta,C)$, the inverse construction $(\eta,C)\mapsto\widetilde{\pi}$ recovers
$\sigma=\eta^{-1}$ and bars precisely the values in
$\operatorname{BarVal}(\pi)$ and thus yields $\widetilde{\pi}=\pi$.  Starting with $(\eta,C)$ and yielding $\pi$, the inverse construction $\pi\mapsto(\widetilde{\eta},\widetilde{C})$ recovers each letter of $C$ because of formula
\eqref{eq:color-word-from-pi}, so $\widetilde{C}=C$. Additionally, it is clear that $\widetilde{\eta}=\eta$. Thus the two
constructions are mutually inverse.
\end{proof}

\begin{definition}\label{def:cr}
Let $(\eta,C)\in\operatorname{Adm}_{n,k,r}$.  Define its \emph{crossing} by
\begin{align*}
\operatorname{cr}_{\mathsf{B}}(\eta,C)
:={}&
\#\{(p_1,p_2)\in\operatorname{Prim}(C)^2:
       \eta_{p_1}<\eta_{p_2}<p_1<p_2\}\\
&+\#\{(j_1,j_2)\in\operatorname{NonExc}(C)^2:
       j_1<j_2\leq\eta_{j_1}<\eta_{j_2}\}\\
&+\#\{(b_1,b_2)\in\operatorname{Bar}(C)^2:
       b_1<b_2\text{ and }\eta_{b_1}<\eta_{b_2}\}\\
&+\#\{(b,j)\in\operatorname{Bar}(C)\times\operatorname{NonExc}(C):
       j\leq\eta_b<\eta_j\}\\
&+\#\{(b,p)\in\operatorname{Bar}(C)\times\operatorname{Prim}(C):
       \eta_p<b<p\}.
\end{align*}
Define its \emph{alignment} by
\begin{align*}
\operatorname{al}_{\mathsf{B}}(\eta,C)
:={}&
\#\{(p_1,p_2)\in\operatorname{Prim}(C)^2:
       p_1<p_2\text{ and }\eta_{p_1}>\eta_{p_2}\}\\
&+\#\{(b_1,b_2)\in\operatorname{Bar}(C)^2:
       b_1<b_2\text{ and }\eta_{b_1}>\eta_{b_2}\}\\
&+\#\{(j_1,j_2)\in\operatorname{NonExc}(C)^2:
       j_1<j_2\text{ and }\eta_{j_1}>\eta_{j_2}\}\\
&+\sum_{\substack{p\in\operatorname{Prim}(C)\\
                   b\in\operatorname{Bar}(C)}}
  \bigl(\one_{p<b}+\one_{\eta_p>\eta_b}\bigr)\\
&+\sum_{\substack{p\in\operatorname{Prim}(C)\\
                   j\in\operatorname{NonExc}(C)}}
  \bigl(\one_{p<j}+\one_{\eta_p>\eta_j}\bigr)\\
&+\sum_{\substack{b\in\operatorname{Bar}(C)\\
                   j\in\operatorname{NonExc}(C)}}
  \bigl(\one_{b<j}+\one_{\eta_b>\eta_j}\bigr).
\end{align*}
The two indicator functions attached to a pair of different types are
regarded as two distinct alignment incidences.  Consequently, such a
pair may contribute $0$, $1$, or $2$ to
$\operatorname{al}_{\mathsf{B}}(\eta,C)$.

If $\pi$ corresponds to $(\eta,C)$ under
Proposition~\ref{prop:bicolored-admissible-bijection}, define
\[
\cro{\pi}:=\operatorname{cr}_{\mathsf{B}}(\eta,C).
\]
\end{definition}

In particular, both statistics on admissible data in
Definition~\ref{def:cr} are
manifestly nonnegative integer-valued statistics.
\begin{remark}[A disk interpretation of the crossing statistic]
\label{rem:disk-crossing-diagram}
Let $\pi$ have underlying permutation $\sigma$, and put
$\eta=\sigma^{-1}$, so $\eta_v$ is the position of the value $v$. Let $C$ be the word given by \eqref{eq:color-word-from-pi}. 
Place two successive clockwise copies of
\[
\mathrm{tgt}_1,\mathrm{src}_1,\ldots,
\mathrm{tgt}_n,\mathrm{src}_n
\]
on a circle, called segments $\mathrm I$ and $\mathrm {II}$.
Sources $\mathrm{src}_i$ represent positions and targets $\mathrm{tgt}_i$ represent values; all
$4n$ points are distinct, with $\mathrm{tgt}_i$ preceding
$\mathrm{src}_i$ in each copy. Draw straight chords as follows:
\[
\begin{array}{c|c}
\text{type of the value}&\text{endpoints}\\ \hline
v\in\operatorname{Prim}(C)&
\mathrm{src}^{\mathrm I}_{\eta_v},\ \mathrm{tgt}^{\mathrm I}_v\\
b\in\operatorname{Bar}(C)&
\mathrm{src}^{\mathrm {II}}_{\eta_b},\ \mathrm{tgt}^{\mathrm I}_b\\
j\in\operatorname{NonExc}(C)&
\mathrm{src}^{\mathrm {II}}_{\eta_j},\ \mathrm{tgt}^{\mathrm {II}}_j
\end{array}
\]
Then $\cro{\pi}$ counts unordered pairs of chords
whose relative interiors intersect. Thus three chords intersecting at a point count as three intersecting pairs, not one intersection point.

We provide an example below. For $\pi=(\overline4,3,1,2)$, we have $\eta=(3,4,2,1)$.
Figure~\ref{fig:disk-crossing-compact} gives four successive stages:
(a) separate the target and source points;
(b) join position $2$ to value $3$ in segment $\mathrm I$;
(c) join positions $3,4$ to values $1,2$, respectively, in
segment $\mathrm {II}$;
(d) join position $1$ in segment $\mathrm {II}$ to value $4$
in segment $\mathrm I$.
There are two intersecting pairs, so $\cro{\pi}=2$.
Open circles are sources, squares are targets, and unused points
are pale gray.

\begin{figure}[H]
\centering
\begin{minipage}[t]{.48\linewidth}
\centering
\resizebox{.76\linewidth}{!}{\CrDiskPanel{1}}\\[1pt]
\footnotesize (a) Separate values and positions.
\end{minipage}\hfill
\begin{minipage}[t]{.48\linewidth}
\centering
\resizebox{.76\linewidth}{!}{\CrDiskPanel{3}}\\[1pt]
\footnotesize (b) Add the primed chord.
\end{minipage}

\smallskip
\begin{minipage}[t]{.48\linewidth}
\centering
\resizebox{.76\linewidth}{!}{\CrDiskPanel{5}}\\[1pt]
\footnotesize (c) Add the nonexcedance chords.
\end{minipage}\hfill
\begin{minipage}[t]{.48\linewidth}
\centering
\resizebox{.76\linewidth}{!}{\CrDiskPanel{7}}\\[1pt]
\footnotesize (d) Add the barred chord.
\end{minipage}
\caption{Constructing the disk diagram for $(\overline4,3,1,2)$.}
\label{fig:disk-crossing-compact}
\end{figure}
\end{remark}

\begin{lemma}\label{lem:pair-contributions}
For $(\eta,C)\in\operatorname{Adm}_{n,k,r}$, the combined contribution
of a pair to
$\operatorname{cr}_{\mathsf{B}}(\eta,C)+
 \operatorname{al}_{\mathsf{B}}(\eta,C)$
is given by the following table (i.e. $\operatorname{cr}_{\mathsf{B}}(\eta,C)+
 \operatorname{al}_{\mathsf{B}}(\eta,C)$ equals the sum of numbers in the second column over pairs in the first column of the table below):
\begin{equation}\label{eq:pair-contribution-table}
\begin{array}{c|c}
\text{type of pair}&\text{combined contribution}\\ \hline
p_1<p_2,\quad p_1,p_2\in\operatorname{Prim}(C)
    &\one_{\eta_{p_2}<p_1}\\
j_1<j_2,\quad j_1,j_2\in\operatorname{NonExc}(C)
    &\one_{j_2\leq\eta_{j_1}}\\
b_1<b_2,\quad b_1,b_2\in\operatorname{Bar}(C)
    &1\\
p\in\operatorname{Prim}(C),\quad b\in\operatorname{Bar}(C)
    &\one_{\eta_p<b}+\one_{\eta_b<\eta_p}\\
b\in\operatorname{Bar}(C),\quad j\in\operatorname{NonExc}(C)
    &\one_{b<j}+\one_{\eta_b\geq j}\\
p\in\operatorname{Prim}(C),\quad j\in\operatorname{NonExc}(C)
    &\one_{p<j}+\one_{\eta_p>\eta_j}.
\end{array}
\end{equation}
\end{lemma}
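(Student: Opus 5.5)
The plan is a direct pair-by-pair bookkeeping argument. Both $\operatorname{cr}_{\mathsf{B}}(\eta,C)$ and $\operatorname{al}_{\mathsf{B}}(\eta,C)$ in Definition~\ref{def:cr} are sums over unordered pairs of distinct values $\{a,b\}\subseteq[n]$, each term depending only on the types $C_a,C_b$ and on the relative order of $a,b,\eta_a,\eta_b$. I will therefore reorganize both sums according to the six unordered type classes $\{\mathsf P,\mathsf P\}$, $\{\mathsf N,\mathsf N\}$, $\{\mathsf B,\mathsf B\}$, $\{\mathsf P,\mathsf B\}$, $\{\mathsf B,\mathsf N\}$, $\{\mathsf P,\mathsf N\}$. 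For each class I will add the crossing term and the alignment term attached to a fixed pair, and simplify using the admissibility inequalities $\eta_p<p$ for $p\in\operatorname{Prim}(C)$ and $\eta_j\ge j$ for $j\in\operatorname{NonExc}(C)$, together with the fact that $\eta$ is injective.

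The equal-type cases are short. For $p_1<p_2$ primed, the crossing term is $\one_{\eta_{p_1}<\eta_{p_2}<p_1}$ and the alignment term is $\one_{\eta_{p_1}>\eta_{p_2}}$. Their sum is $\one_{\eta_{p_2}<p_1}$: if $\eta_{p_1}>\eta_{p_2}$ then $\eta_{p_2}<\eta_{p_1}<p_1$ automatically, and otherwise the crossing condition reduces to $\eta_{p_2}<p_1$. The nonexcedance case is symmetric: the crossing term is $\one_{j_2\le\eta_{j_1}<\eta_{j_2}}$ and the alignment term is $\one_{\eta_{j_1}>\eta_{j_2}}$. When $\eta_{j_1}>\eta_{j_2}$ we have $\eta_{j_1}>\eta_{j_2}\ge j_2$, so the sum is $\one_{j_2\le\eta_{j_1}}$. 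For two barred values, the crossing and alignment terms are $\one_{\eta_{b_1}<\eta_{b_2}}$ and $\one_{\eta_{b_1}>\eta_{b_2}}$, which sum to $1$.

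For the mixed-type cases I need to collect every crossing term. The pair $\{\mathsf P,\mathsf N\}$ has no crossing term at all, so its entry in the table is just the alignment term. For $\{\mathsf P,\mathsf B\}$, the sum is $\one_{\eta_p<b<p}+\one_{p<b}+\one_{\eta_p>\eta_b}$. Since $\eta_p<p$, the first two indicators combine to $\one_{\eta_p<b}$. The remaining task is to reconcile $\one_{\eta_p>\eta_b}$ with the table's $\one_{\eta_b<\eta_p}$, which is the same quantity.

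For $\{\mathsf B,\mathsf N\}$, the sum is $\one_{j\le\eta_b<\eta_j}+\one_{b<j}+\one_{\eta_b>\eta_j}$. Since $\eta_b\neq\eta_j$, the first and third indicators combine to $\one_{\eta_b\ge j}$: when $\eta_b>\eta_j$ we already have $\eta_b>\eta_j\ge j$, and otherwise only the first term can be nonzero. This gives $\one_{b<j}+\one_{\eta_b\ge j}$.

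The main risk is not any single computation but the bookkeeping. I must confirm that each mixed pair is counted exactly once in each of the two sums, with no ordered double counting. I must also confirm that distinctness of the values, and of their $\eta$-images, is what justifies dropping the strict versus weak inequality distinctions in the case analysis. I will check the $\{\mathsf B,\mathsf N\}$ case most carefully, since it is the one where the weak inequality $j\le\eta_b$ matters.
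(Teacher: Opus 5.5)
Your proposal is correct and follows the paper's proof essentially step for step. Like the paper, you verify the table row by row, combining each type class's crossing and alignment indicators via $\eta_p<p$, $\eta_j\ge j$, injectivity of $\eta$, and distinctness of values of different types (e.g.\ $b\neq p$ in the $\mathsf{P}\mathsf{B}$ row); the bookkeeping concern you raise is immediate, since every condition in Definition~\ref{def:cr} fixes the order within its pair, so each unordered pair of values is counted at most once in each statistic.
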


\begin{proof}
First of all, every pair in Definition~\ref{def:cr} occurs once in the table~\eqref{eq:pair-contribution-table}, so the pairs not in this table have no contribution to $\operatorname{cr}_{\mathsf{B}}(\eta,C)+
 \operatorname{al}_{\mathsf{B}}(\eta,C)$ and it suffices to deal with all the pairs in this table one by one.
 
Fix $p_1<p_2$ in $\operatorname{Prim}(C)$.  The crossing and alignment
contributions of this pair are, respectively,
\[
\one_{\eta_{p_1}<\eta_{p_2}<p_1}
\qquad\text{and}\qquad
\one_{\eta_{p_1}>\eta_{p_2}}.
\]
Since $\eta_{p_1}<p_1$, the condition
$\eta_{p_1}>\eta_{p_2}$ implies $\eta_{p_2}<p_1$.  Conversely, if
$\eta_{p_2}<p_1$, then exactly one of
$\eta_{p_1}<\eta_{p_2}$ and $\eta_{p_1}>\eta_{p_2}$ holds.  In the
first case the pair is counted by the crossing term, and in the second
case it is counted by the alignment term.  This proves the first row.

Fix $j_1<j_2$ in $\operatorname{NonExc}(C)$.  The crossing and alignment
contributions are
\[
\one_{j_2\leq\eta_{j_1}<\eta_{j_2}}
\qquad\text{and}\qquad
\one_{\eta_{j_1}>\eta_{j_2}}.
\]
Because $\eta_{j_2}\geq j_2$, the second condition implies
$\eta_{j_1}>\eta_{j_2}\geq j_2$.  Conversely, if
$j_2\leq\eta_{j_1}$, then exactly one of
$\eta_{j_1}<\eta_{j_2}$ and $\eta_{j_1}>\eta_{j_2}$ holds; these two
cases give the crossing and alignment contributions, respectively.
This proves the second row.

If $b_1<b_2$ belong to $\operatorname{Bar}(C)$, then their crossing and
alignment contributions are
\[
\one_{\eta_{b_1}<\eta_{b_2}}
\qquad\text{and}\qquad
\one_{\eta_{b_1}>\eta_{b_2}}.
\]
The entries $\eta_{b_1}$ and $\eta_{b_2}$ are distinct, because $\eta$
is a permutation.  Exactly one of the two inequalities holds, proving
the third row.

For $p\in\operatorname{Prim}(C)$ and
$b\in\operatorname{Bar}(C)$, the crossing contribution is
$\one_{\eta_p<b<p}$ and the alignment contribution consists of
$\one_{p<b}$ and $\one_{\eta_p>\eta_b}$.  Since $\eta_p<p$ and
$b\neq p$, the two conditions $p<b$ and $\eta_p<b<p$ are disjoint and
their union is the condition $\eta_p<b$.  Hence
\[
\one_{p<b}+\one_{\eta_p<b<p}=\one_{\eta_p<b}.
\]
Moreover, $\eta_p\neq\eta_b$, so
$\one_{\eta_p>\eta_b}=\one_{\eta_b<\eta_p}$.  This proves the fourth
row.

For $b\in\operatorname{Bar}(C)$ and
$j\in\operatorname{NonExc}(C)$, the crossing contribution is
$\one_{j\leq\eta_b<\eta_j}$, while the alignment contribution consists of
$\one_{b<j}$ and $\one_{\eta_b>\eta_j}$.  Since $\eta_j\geq j$ and
$\eta_b\neq\eta_j$, the conditions
\[
j\leq\eta_b<\eta_j
\qquad\text{and}\qquad
\eta_b>\eta_j
\]
are disjoint and their union is $\eta_b\geq j$.  Therefore
\[
\one_{j\leq\eta_b<\eta_j}+\one_{\eta_b>\eta_j}
=\one_{\eta_b\geq j},
\]
which proves the fifth row after adding $\one_{b<j}$.

Finally, no term in the definition of
$\operatorname{cr}_{\mathsf{B}}$ is indexed by a
$\mathsf{P}\mathsf{N}$ pair.  Its combined contribution is therefore
exactly its alignment contribution
$\one_{p<j}+\one_{\eta_p>\eta_j}$.  This proves the sixth row.
\end{proof}

Define
\begin{equation}\label{eq:delta-definition}
\delta_{n,k,r}
:=\binom{n}{2}-\binom{r+1}{2}-\binom{u}{2},
\qquad u=n-k-r.
\end{equation}

\begin{theorem}\label{thm:cr-al-complement}
For every $(\eta,C)\in\operatorname{Adm}_{n,k,r}$,
\[
\operatorname{cr}_{\mathsf{B}}(\eta,C)
+\operatorname{al}_{\mathsf{B}}(\eta,C)
=\delta_{n,k,r}.
\]
\end{theorem}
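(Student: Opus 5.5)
The plan is to read off $\operatorname{cr}_{\mathsf{B}}+\operatorname{al}_{\mathsf{B}}$ from Lemma~\ref{lem:pair-contributions} and then evaluate the resulting sum of indicators by double counting. The Bar–Bar row already contributes $\binom{k}{2}$. Expanding
\[
\binom{n}{2}=\binom r2+\binom k2+\binom u2+rk+ru+ku
\]
shows that
\[
\delta_{n,k,r}=\binom k2+rk+ru+ku-r .
\]
So the task reduces to showing that the remaining five rows of the table \eqref{eq:pair-contribution-table}, namely the $\mathsf{PP}$, $\mathsf{NN}$, $\mathsf{PB}$, $\mathsf{BN}$ and $\mathsf{PN}$ rows, sum to $rk+ru+ku-r$.

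The main tool is a threshold identity. Because $\eta$ is a bijection of $[n]$, every $m\in[n]$ satisfies
\[
\#\{x\in[n]:\eta_x<m\}=m-1=\#\{x\in[n]:x<m\},
\]
and similarly $\#\{x:\eta_x\ge m\}=\#\{x:x\ge m\}$. I will apply the first identity with $m=p$ for each $p\in\operatorname{Prim}(C)$ and the second with $m=j$ for each $j\in\operatorname{NonExc}(C)$.

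Take a primed value $p$ first. Split both sides of the first identity according to the type of $x$. The left side becomes
\[
\#\{p'\in\operatorname{Prim}:\eta_{p'}<p\}+\#\{b:\eta_b<p\}+\#\{j:\eta_j<p\}.
\]
The first of these terms contains $p$ itself, since $\eta_p<p$. The right side splits in the same way into primed, barred and nonexcedance values below $p$. For a nonexcedance value, $\eta_j<p$ forces $j\le\eta_j<p$, so the nonexcedance count on the left is at most the one on the right.

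The same decomposition at $j\in\operatorname{NonExc}(C)$, with the inequalities reversed, produces $\#\{j'\ne j:\eta_{j'}\ge j\}$ and related counts. Summing all of these identities over $p$ and $j$ yields linear relations among the counts:
\begin{itemize}
\item the $\mathsf{PP}$ terms $\one_{\eta_{p_2}<p_1}$;
\item the $\mathsf{NN}$ terms $\one_{j_2\le\eta_{j_1}}$;
\item mixed terms such as $\one_{\eta_p<b}$ versus $\one_{b<p}$, and $\one_{\eta_b\ge j}$ versus $\one_{b\ge j}$.
\end{itemize}
Using these relations, I will trade each condition involving $\eta$ against a condition involving only the values. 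After the trade, every mixed pair should contribute exactly $1$ through $\one_{x<y}+\one_{y<x}$, which accounts for $rk+ru+ku$. The self-incidences $\eta_p<p$, one for each of the $r$ primed values, should account for the correction $-r$. The corresponding $\mathsf{N}$ self-incidences $\eta_j\ge j$ should cancel, because the threshold identity for $\mathsf{N}$ uses $\ge$ on both sides.

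The main obstacle is this bookkeeping. The $\mathsf{PN}$ row contains $\one_{\eta_p>\eta_j}$, which compares two $\eta$-values rather than an $\eta$-value with a value. It has to be paired with the mixed $\mathsf{P}/\mathsf{N}$ terms coming from the threshold identities, and the admissibility inequalities $\eta_p<p$ and $\eta_j\ge j$ from Definition~\ref{def:admissible-inverse-data} are what make those pieces combine. If the direct summation becomes unwieldy, the fallback is a local argument: show that the total is unchanged when two adjacent entries of $\eta$ are swapped, provided admissibility is preserved. That would reduce the theorem to a single normal-form pair $(\eta,C)$, for instance the one with all primed values first, where the total can be computed by hand.
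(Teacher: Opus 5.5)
Your reduction is correct: the $\mathsf{BB}$ row gives $\binom k2$, $\delta_{n,k,r}=\binom k2+rk+ru+ku-r$, and your identity at $m=j\in\operatorname{NonExc}(C)$ is exactly the cut-balance identity $\#\{x:x<j\le\eta_x\}=\#\{x:\eta_x<j\le x\}$ needed for the $\mathsf{NN}$ and $\mathsf{BN}$ rows. The gap is on the primed side, at the step you flag as the obstacle. The identities you plan to use, at thresholds $m=p$ and $m=j$, only compare an $\eta$-value with a value. The $\mathsf{PB}$ and $\mathsf{PN}$ rows, however, contain $\one_{\eta_b<\eta_p}$ and $\one_{\eta_j<\eta_p}$, which compare two $\eta$-values. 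None of your relations involves such terms, so the pairing you envisage cannot take place. Consequently the claims that mixed pairs ``should'' contribute $1$ and that self-incidences ``should'' give $-r$ are unsupported.

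The threshold that works is $\eta_p$, not $p$; this is your own identity applied at $m=\eta_p$. Write $B=\operatorname{Bar}(C)$ and $N=\operatorname{NonExc}(C)$. Group all terms in which $p$ is the primed member, taking $p$ to be the larger value in the $\mathsf{PP}$ row. The $\eta$-versus-$\eta$ terms then total $\#\{x\in B\cup N:\eta_x<\eta_p\}=\eta_p-1-\#\{p'\in\operatorname{Prim}(C):\eta_{p'}<\eta_p\}$. This $\eta_p$ cancels against $\#\{x\in B\cup N:x>\eta_p\}=n-\eta_p-r+\#\{p'\in\operatorname{Prim}(C):p'\le\eta_p\}$. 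The last term of that count combines with the $\mathsf{PP}$ count $\#\{p':\eta_p<p'<p\}$ to give $\#\{p':p'<p\}$, because $\eta_p<p$.

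That second count needs $\one_{\eta_p<j}$ for $j\in N$, whereas the $\mathsf{PN}$ row only supplies $\one_{p<j}$. So you must add the correction $\operatorname{gap}(p)=\#\{j\in N:\eta_p<j<p\}$. With it, the primed terms for $p$ plus $\operatorname{gap}(p)$ equal $k+u-1+\#\{p'<p\}-\#\{p':\eta_{p'}<\eta_p\}$. The last two counts each sum to $\binom r2$ over $p$, so the three primed rows total $r(k+u-1)-\sum_p\operatorname{gap}(p)$.

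Now split your cut-balance identity by type: a primed value cannot occur on the left and a nonexcedance value cannot occur on the right. Summing over $j\in N$ shows that the $\mathsf{NN}$ and $\mathsf{BN}$ rows total $ku+\sum_p\operatorname{gap}(p)$. The gap terms cancel, leaving $rk+ru+ku-r$. This linking term is the missing idea. The $\mathsf{PP}$ and $\mathsf{NN}$ rows do not vanish separately; they are absorbed by the rank cancellation and by the gap, respectively.

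Your fallback swap argument does not close the gap as stated. You would have to prove that $\operatorname{Adm}_{n,k,r}$ is connected under admissibility-preserving moves that also change $C$, since $\delta_{n,k,r}$ is independent of $C$. You would also have to evaluate the total at the resulting normal forms. Neither step is supplied.
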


\begin{proof}
All pairs of values belong to exactly one of the six rows in
\eqref{eq:pair-contribution-table}.  We sum the contributions in those
six rows.

First, every pair of values in $\operatorname{Bar}(C)$ contributes
$1$.  Their total contribution is therefore
\begin{equation}\label{eq:bar-bar-total}
\binom{k}{2}.
\end{equation}

We next collect the contributions of all
$\mathsf{P}\mathsf{P}$, $\mathsf{P}\mathsf{B}$, and
$\mathsf{P}\mathsf{N}$ pairs.  For $p\in\operatorname{Prim}(C)$, set
\begin{align*}
\operatorname{contrib}_{\mathsf{P}}(p)
:={}&
\#\{p'\in\operatorname{Prim}(C):\eta_p<p'<p\}\\
&+\#\{b\in\operatorname{Bar}(C):\eta_p<b\}
 +\#\{b\in\operatorname{Bar}(C):\eta_b<\eta_p\}\\
&+\#\{j\in\operatorname{NonExc}(C):p<j\}
 +\#\{j\in\operatorname{NonExc}(C):\eta_j<\eta_p\}.
\end{align*}
In the first term, $p$ is the larger of the two
$\operatorname{Prim}(C)$-values.  It follows from the first, fourth, and
sixth rows of \eqref{eq:pair-contribution-table} that the total
contribution of these three kinds of pairs is
\begin{equation}\label{eq:prim-pairs-as-sum}
\sum_{p\in\operatorname{Prim}(C)}
\operatorname{contrib}_{\mathsf{P}}(p).
\end{equation}

For $p\in\operatorname{Prim}(C)$, define
\[
\operatorname{gap}_{\mathsf{N}}(p)
:=\#\{j\in\operatorname{NonExc}(C):\eta_p<j\leq p\},
\]
and put
\begin{equation}\label{eq:gap-total-definition}
\operatorname{Gap}_{\mathsf{P},\mathsf{N}}(\eta,C)
:=\sum_{p\in\operatorname{Prim}(C)}
  \operatorname{gap}_{\mathsf{N}}(p).
\end{equation}
Since $\eta_p<p$, the two disjoint intervals
$\eta_p<j\leq p$ and $p<j$ together give the interval $\eta_p<j$.
Consequently,
\begin{align}\label{eq:prim-contribution-expanded}
&\operatorname{contrib}_{\mathsf{P}}(p)
 +\operatorname{gap}_{\mathsf{N}}(p)\\
 ={}&
\#\{p'\in\operatorname{Prim}(C):\eta_p<p'<p\}\notag\\
&+(\#\{b\in\operatorname{Bar}(C):
                 \eta_p<b\} + \#\{j\in\operatorname{NonExc}(C):p<j\}+\#\{j\in\operatorname{NonExc}(C):\eta_p<j\leq p\})\notag\\
&+(\#\{b\in\operatorname{Bar}(C):\eta_b<\eta_p\} + \#\{j\in\operatorname{NonExc}(C):\eta_j<\eta_p\}\notag \\
={}&
\#\{p'\in\operatorname{Prim}(C):\eta_p<p'<p\}\notag\\
&+\#\{x\in\operatorname{Bar}(C)\cup\operatorname{NonExc}(C):
                 x>\eta_p\}\notag\\
&+\#\{x\in\operatorname{Bar}(C)\cup\operatorname{NonExc}(C):
                 \eta_x<\eta_p\}.\notag
\end{align}
There are $n-\eta_p$ values in $[n]$ that are strictly larger than
$\eta_p$.  Among them, exactly
\[
r-\#\{p'\in\operatorname{Prim}(C):p'\leq\eta_p\}
\]
belong to $\operatorname{Prim}(C)$.  Hence
\begin{align}
&\#\{x\in\operatorname{Bar}(C)\cup\operatorname{NonExc}(C):
                 x>\eta_p\}\notag\\
={}&n-\eta_p-r
 +\#\{p'\in\operatorname{Prim}(C):p'\leq\eta_p\}\notag\\
={}&k+u-\eta_p
 +\#\{p'\in\operatorname{Prim}(C):p'\leq\eta_p\}.
\label{eq:values-above-etap}
\end{align}
Because $\eta$ is a permutation, exactly $\eta_p-1$ values
$x\in[n]$ satisfy $\eta_x<\eta_p$.  Removing the values in
$\operatorname{Prim}(C)$ gives
\begin{align}
&\#\{x\in\operatorname{Bar}(C)\cup\operatorname{NonExc}(C):
                 \eta_x<\eta_p\}\notag\\
={}&\eta_p-1
 -\#\{p'\in\operatorname{Prim}(C):\eta_{p'}<\eta_p\}.
\label{eq:positions-below-etap}
\end{align}
Substituting \eqref{eq:values-above-etap} and
\eqref{eq:positions-below-etap} into
\eqref{eq:prim-contribution-expanded}, and using $\eta_p<p$, gives
\begin{align}
&\operatorname{contrib}_{\mathsf{P}}(p)
 +\operatorname{gap}_{\mathsf{N}}(p)\notag\\
={}&k+u-1
 +\#\{p'\in\operatorname{Prim}(C):p'<p\}
 -\#\{p'\in\operatorname{Prim}(C):\eta_{p'}<\eta_p\}.
\label{eq:prim-contribution-ranks}
\end{align}
Indeed, the first cardinality after the last equal sign in
\eqref{eq:prim-contribution-expanded} and the last cardinality in
\eqref{eq:values-above-etap} add to
\[
\#\{p'\in\operatorname{Prim}(C):p'<p\},
\]
because the disjoint intervals $p'\leq\eta_p$ and
$\eta_p<p'<p$ partition the $\operatorname{Prim}(C)$-values below $p$ (recall that $p\in\operatorname{Prim}(C)\Rightarrow\eta_p<p$ so that the inequality $p'\leq\eta_p$ can imply $p^\prime<p$).

As $p$ ranges over $\operatorname{Prim}(C)$, each unordered pair of
distinct $\operatorname{Prim}(C)$-values is counted exactly once in
\[
\sum_{p\in\operatorname{Prim}(C)}
\#\{p'\in\operatorname{Prim}(C):p'<p\}.
\]
Therefore this sum equals $\binom{r}{2}$.  The numbers
$\{\eta_p:p\in\operatorname{Prim}(C)\}$ are also distinct, so the same
argument, now using their usual order, gives
\[
\sum_{p\in\operatorname{Prim}(C)}
\#\{p'\in\operatorname{Prim}(C):\eta_{p'}<\eta_p\}
=\binom{r}{2}.
\]
Consequently, summing \eqref{eq:prim-contribution-ranks} over all
$p\in\operatorname{Prim}(C)$ and applying
\eqref{eq:gap-total-definition} yields
\[\sum_{p\in\operatorname{Prim}(C)}\operatorname{contrib}_{\mathsf{P}}(p)+\operatorname{Gap}_{\mathsf{P},\mathsf{N}}(\eta,C) = r(k+u-1) + \binom{r}{2} - \binom{r}{2} = r(k+u-1)\]
and thus
\begin{equation}\label{eq:prim-total}
\sum_{p\in\operatorname{Prim}(C)}
\operatorname{contrib}_{\mathsf{P}}(p)
=r(k+u-1)-\operatorname{Gap}_{\mathsf{P},\mathsf{N}}(\eta,C).
\end{equation}

It remains to count the $\mathsf{N}\mathsf{N}$ and
$\mathsf{B}\mathsf{N}$ contributions.  Let their total be
$\operatorname{contrib}_{\mathsf{N}}(\eta,C)$.  The second and fifth
rows of \eqref{eq:pair-contribution-table} give
\begin{align}
\operatorname{contrib}_{\mathsf{N}}(\eta,C)
={}&
\sum_{\substack{j_1,j_2\in\operatorname{NonExc}(C)\\j_1<j_2}}
\one_{j_2\leq\eta_{j_1}}\notag\\
&+\sum_{\substack{b\in\operatorname{Bar}(C)\\
                   j\in\operatorname{NonExc}(C)}}
\bigl(\one_{b<j}+\one_{\eta_b\geq j}\bigr).
\label{eq:nonexc-total-start}
\end{align}
There are $ku$ pairs in
$\operatorname{Bar}(C)\times\operatorname{NonExc}(C)$, and for each such
pair
\[
\one_{\eta_b\geq j}-1=-\one_{\eta_b<j}.
\]
Subtracting $ku$ from \eqref{eq:nonexc-total-start} therefore gives
\begin{align}
&\operatorname{contrib}_{\mathsf{N}}(\eta,C)-ku\notag\\
={}&
\sum_{\substack{j_1,j_2\in\operatorname{NonExc}(C)\\j_1<j_2}}
\one_{j_2\leq\eta_{j_1}}\notag\\
&+\sum_{\substack{b\in\operatorname{Bar}(C)\\
                   j\in\operatorname{NonExc}(C)}}
\bigl(\one_{b<j}-\one_{\eta_b<j}\bigr).
\label{eq:nonexc-total-minus-ku}
\end{align}
For every $b\in\operatorname{Bar}(C)$ and
$j\in\operatorname{NonExc}(C)$, we have
\begin{align}
\one_{b<j}-\one_{\eta_b<j}
={}&\one_{b<j,\ \eta_b\geq j}
    -\one_{\eta_b<j,\ b\geq j}\notag\\
={}&\one_{b<j\leq\eta_b}-\one_{\eta_b<j\leq b}.
\label{eq:bar-cut-difference}
\end{align}
Here the first equality follows by cancelling the case in which both
$b<j$ and $\eta_b<j$ hold, and the second uses $b\neq j$.

We now use the following cut-balance identity.  For every $\tau\in[n]$,
\begin{equation}\label{eq:cut-balance}
\#\{x\in[n]:x<\tau\leq\eta_x\}
=\#\{x\in[n]:\eta_x<\tau\leq x\}.
\end{equation}
To prove it, set
\[
A_\tau:=\{x\in[n]:x<\tau\},
\qquad
D_\tau:=\{x\in[n]:\eta_x<\tau\}.
\]
Then the left-hand side of \eqref{eq:cut-balance} is
$|A_\tau\setminus D_\tau|$, and the right-hand side is
$|D_\tau\setminus A_\tau|$.  We have $|A_\tau|=\tau-1$.  Since $\eta$
is a permutation, exactly $\tau-1$ of its values are smaller than
$\tau$, so $|D_\tau|=\tau-1$ as well.  The identity
\[
|A_\tau\setminus D_\tau|-|D_\tau\setminus A_\tau|
=|A_\tau|-|D_\tau|=0
\]
proves \eqref{eq:cut-balance}.

Apply \eqref{eq:cut-balance} with $\tau=j$ for a fixed
$j\in\operatorname{NonExc}(C)$.  A value
$p\in\operatorname{Prim}(C)$ cannot contribute to the left-hand side,
because $p<j$ and $\eta_p<p$ would imply $\eta_p<j$, contrary to
$j\leq\eta_p$.  A value $j'\in\operatorname{NonExc}(C)$ cannot contribute
to the right-hand side, because $\eta_{j'}\geq j'$ and $j\leq j'$ would
imply $j\leq j'\leq\eta_{j'}$, contrary to $\eta_{j'}<j$.  Splitting
the remaining values according to their types therefore gives
\begin{align}
&\#\{j'\in\operatorname{NonExc}(C):j'<j\leq\eta_{j'}\}
 +\#\{b\in\operatorname{Bar}(C):b<j\leq\eta_b\}\notag\\
={}&
\#\{p\in\operatorname{Prim}(C):\eta_p<j\leq p\}
 +\#\{b\in\operatorname{Bar}(C):\eta_b<j\leq b\}.
\label{eq:cut-balance-by-type}
\end{align}
Sum \eqref{eq:cut-balance-by-type} over
$j\in\operatorname{NonExc}(C)$.  The first term on its left is the first
sum on the right-hand side of \eqref{eq:nonexc-total-minus-ku}; the
difference between the second terms on its two sides is the second sum
in \eqref{eq:nonexc-total-minus-ku}, by
\eqref{eq:bar-cut-difference}; and the first term on its right is
$\operatorname{Gap}_{\mathsf{P},\mathsf{N}}(\eta,C)$.  It follows that
\begin{equation}\label{eq:nonexc-total}
\operatorname{contrib}_{\mathsf{N}}(\eta,C)
=ku+\operatorname{Gap}_{\mathsf{P},\mathsf{N}}(\eta,C).
\end{equation}

Finally, combine \eqref{eq:bar-bar-total}, \eqref{eq:prim-total}, and
\eqref{eq:nonexc-total}.  The two gap terms cancel, and hence
\begin{align*}
\operatorname{cr}_{\mathsf{B}}(\eta,C)
+\operatorname{al}_{\mathsf{B}}(\eta,C)
&=\binom{k}{2}+r(k+u-1)+ku\\
&=\binom{k}{2}+rk+ru+ku-r.
\end{align*}
Since $n=r+k+u$, expanding $\binom{n}{2}$ gives
\[
\binom{n}{2}
=\binom{r}{2}+\binom{k}{2}+\binom{u}{2}+rk+ru+ku.
\]
Using $\binom{r+1}{2}=\binom{r}{2}+r$, we obtain
\begin{align*}
\delta_{n,k,r}
&=\binom{n}{2}-\binom{r+1}{2}-\binom{u}{2}\\
&=\binom{k}{2}+rk+ru+ku-r.
\end{align*}
This proves the theorem.
\end{proof}

\begin{proposition}\label{prop:signed-formulas}
Let $(\eta,C)\in\operatorname{Adm}_{n,k,r}$.  Write the three sets of
values in increasing order as
\begin{align*}
\operatorname{Prim}(C)&=\{p_1<\cdots<p_r\},\\
\operatorname{Bar}(C)&=\{b_1<\cdots<b_k\},\\
\operatorname{NonExc}(C)&=\{j_1<\cdots<j_u\}.
\end{align*}
An empty set contributes an empty block in the following definitions.
Define two permutations of $[n]$ by
\begin{align*}
\operatorname{ValWord}(C)
&:=(p_1,\ldots,p_r,b_1,\ldots,b_k,j_1,\ldots,j_u),\\
\operatorname{PosWord}(\eta,C)
&:=(\eta_{p_1},\ldots,\eta_{p_r},
    \eta_{b_1},\ldots,\eta_{b_k},
    \eta_{j_1},\ldots,\eta_{j_u}).
\end{align*}
Then
\begin{align}
\operatorname{cr}_{\mathsf{B}}(\eta,C)
&=\binom{k}{2}-r
  +\inv\bigl(\operatorname{ValWord}(C)\bigr)
  -\inv\bigl(\operatorname{PosWord}(\eta,C)\bigr),
\label{eq:signed-cr}\\
\operatorname{al}_{\mathsf{B}}(\eta,C)
&=rk+ru+ku
  +\inv\bigl(\operatorname{PosWord}(\eta,C)\bigr)
  -\inv\bigl(\operatorname{ValWord}(C)\bigr).
\label{eq:signed-al}
\end{align}
\end{proposition}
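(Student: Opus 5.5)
By Theorem~\ref{thm:cr-al-complement}, $\operatorname{cr}_{\mathsf{B}}+\operatorname{al}_{\mathsf{B}}=\delta_{n,k,r}=\binom{k}{2}+rk+ru+ku-r$. The right-hand sides of \eqref{eq:signed-cr} and \eqref{eq:signed-al} also add to exactly this quantity. It therefore suffices to prove \eqref{eq:signed-al}; then \eqref{eq:signed-cr} follows by subtraction. Equivalently, we may prove \eqref{eq:signed-cr} directly and get \eqref{eq:signed-al} the same way. We choose to work with the alignment formula, because every term in it is an indicator that compares values or positions, which is exactly what inversion numbers count.

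We decompose both inversion numbers according to the types of the two entries involved. For $\operatorname{ValWord}(C)$, the blocks are increasing, so there are no within-block inversions. A cross-block pair consisting of an earlier-block value $x$ and a later-block value $y$ is an inversion iff $x>y$. For the three block pairs this gives
\[
\inv(\operatorname{ValWord}(C))=\sum_{p,b}\one_{p>b}+\sum_{p,j}\one_{p>j}+\sum_{b,j}\one_{b>j}.
\]
For $\operatorname{PosWord}(\eta,C)$, the within-block inversions are exactly the $\mathsf{PP}$, $\mathsf{BB}$ and $\mathsf{NN}$ terms of $\operatorname{al}_{\mathsf{B}}$, namely the pairs $x<y$ of the same type with $\eta_x>\eta_y$. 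The cross-block inversions contribute $\sum_{p,b}\one_{\eta_p>\eta_b}+\sum_{p,j}\one_{\eta_p>\eta_j}+\sum_{b,j}\one_{\eta_b>\eta_j}$. These are precisely the second indicators in the mixed-type sums of $\operatorname{al}_{\mathsf{B}}$.

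Comparing with Definition~\ref{def:cr}, we obtain
\[
\operatorname{al}_{\mathsf{B}}-\inv(\operatorname{PosWord})=\sum_{p,b}\one_{p<b}+\sum_{p,j}\one_{p<j}+\sum_{b,j}\one_{b<j}.
\]
Since values of different types are distinct, $\one_{x<y}=1-\one_{x>y}$ for each cross-type pair. Summing, the right-hand side equals $rk+ru+ku-\inv(\operatorname{ValWord}(C))$, which is \eqref{eq:signed-al}.

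The only real care needed is bookkeeping. First, the orientation in the block order ($\mathsf{P}$ before $\mathsf{B}$ before $\mathsf{N}$) must match the orientation of the indicators in $\operatorname{al}_{\mathsf{B}}$. Second, the within-block $\operatorname{PosWord}$ inversions must be indexed by increasing values, so that they match the same-type alignment terms. Empty blocks cause no issue. Finally, the check that the two target right-hand sides sum to $\binom{k}{2}-r+rk+ru+ku=\delta_{n,k,r}$ is immediate from the expansion already done in the proof of Theorem~\ref{thm:cr-al-complement}.
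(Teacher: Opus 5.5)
Your proposal is correct and follows essentially the same route as the paper. The paper also proves \eqref{eq:signed-al} by a type-by-type pair comparison of the two inversion numbers, using $\one_{x<y}=1-\one_{x>y}$ for the $rk+ru+ku$ cross-type pairs, and then deduces \eqref{eq:signed-cr} from Theorem~\ref{thm:cr-al-complement}.
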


\begin{proof}
We first prove \eqref{eq:signed-al}.  Consider a pair $x<y$ of values
of the same type (i.e. $x,y\in T$ for some $T\in\{\operatorname{Prim}(C),\operatorname{Bar}(C),\operatorname{NonExc}(C)\}$).  The value $x$ occurs before $y$ in
$\operatorname{ValWord}(C)$, and the corresponding entries in
$\operatorname{PosWord}(\eta,C)$ are $\eta_x$ and $\eta_y$.  Thus this
pair contributes $0$ to $\inv(\operatorname{ValWord}(C))$ and
$\one_{\eta_x>\eta_y}$ to
$\inv(\operatorname{PosWord}(\eta,C))$.  This is exactly the
same-type contribution of the pair to
$\operatorname{al}_{\mathsf{B}}(\eta,C)$.

Now let $x$ and $y$ have different types, with the type of $x$ preceding
the type of $y$ in the order
\[
\mathsf{P}<\mathsf{B}<\mathsf{N}.
\]
The value $x$ occurs before $y$ in both words.  Hence this pair
contributes
\[
\one_{\eta_x>\eta_y}-\one_{x>y}
\]
to
\[
\inv\bigl(\operatorname{PosWord}(\eta,C)\bigr)
-\inv\bigl(\operatorname{ValWord}(C)\bigr).
\]
On the other hand, the definition of
$\operatorname{al}_{\mathsf{B}}$ assigns to this pair the contribution
\[
\one_{x<y}+\one_{\eta_x>\eta_y}.
\]
Since $x\neq y$, we have
$\one_{x<y}=1-\one_{x>y}$, and consequently
\[
\one_{x<y}+\one_{\eta_x>\eta_y}
=1+\one_{\eta_x>\eta_y}-\one_{x>y}.
\]
There are $rk$ pairs of types $\mathsf{P}\mathsf{B}$, $ru$ pairs of
types $\mathsf{P}\mathsf{N}$, and $ku$ pairs of types
$\mathsf{B}\mathsf{N}$.  Summing the same-type and different-type
contributions over all pairs proves \eqref{eq:signed-al}.

By Theorem~\ref{thm:cr-al-complement} and
\eqref{eq:signed-al},
\begin{align*}
\operatorname{cr}_{\mathsf{B}}(\eta,C)
&=\delta_{n,k,r}-\operatorname{al}_{\mathsf{B}}(\eta,C)\\
&=\binom{k}{2}+rk+ru+ku-r-(rk+ru+ku)\\
&\qquad
  -\inv\bigl(\operatorname{PosWord}(\eta,C)\bigr)
  +\inv\bigl(\operatorname{ValWord}(C)\bigr)\\
&=\binom{k}{2}-r
  +\inv\bigl(\operatorname{ValWord}(C)\bigr)
  -\inv\bigl(\operatorname{PosWord}(\eta,C)\bigr).
\end{align*}
This is \eqref{eq:signed-cr}.
\end{proof}

\begin{proposition}\label{prop:affine-length-and-al}
Let $(\eta,C)\in\operatorname{Adm}_{n,k,r}$.  Define the level of a
value $i\in[n]$ by
\[
\operatorname{lev}_C(i):=
\begin{cases}
2,&C_i=\mathsf{P},\\
1,&C_i=\mathsf{B},\\
0,&C_i=\mathsf{N}.
\end{cases}
\]
Define an extended affine permutation $f_{\eta,C}:\mathbb{Z}\to
\mathbb{Z}$ by specifying its window as
\begin{equation}\label{eq:affine-window-from-data}
f_{\eta,C}(\eta_i):=i-n\operatorname{lev}_C(i)
\qquad(i\in[n])
\end{equation}
and extending periodically by
\[
f_{\eta,C}(j+n)=f_{\eta,C}(j)+n
\qquad(j\in\mathbb{Z}).
\]
Then
\begin{align}
\ell(f_{\eta,C})
&=rk+2ru+ku
 +\inv\bigl(\operatorname{PosWord}(\eta,C)\bigr)
 -\inv\bigl(\operatorname{ValWord}(C)\bigr),
\label{eq:affine-length-signed}\\
\ell(f_{\eta,C})
&=ru+\operatorname{al}_{\mathsf{B}}(\eta,C).
\label{eq:affine-length-al}
\end{align}
Consequently,
\begin{equation}\label{eq:affine-length-cr}
\ell(f_{\eta,C})
=ru+\delta_{n,k,r}-\operatorname{cr}_{\mathsf{B}}(\eta,C).
\end{equation}
If $\pi$ corresponds to $(\eta,C)$ under
Proposition~\ref{prop:bicolored-admissible-bijection}, then $f_{\eta,C}=f_\pi$ and
\begin{equation}\label{eq:al-positive-formula}
\al{\pi}=\operatorname{al}_{\mathsf{B}}(\eta,C).
\end{equation}
\end{proposition}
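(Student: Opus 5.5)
The plan is to compute $\ell(f_{\eta,C})$ directly from the window formula \eqref{eq:bg-affine-window-length}, organised by pairs of \emph{values} rather than pairs of positions. Then \eqref{eq:affine-length-al} will follow by comparing with \eqref{eq:signed-al}, \eqref{eq:affine-length-cr} with Theorem~\ref{thm:cr-al-complement}, and \eqref{eq:al-positive-formula} with Definition~\ref{def:al}.

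\textbf{Setup.} The window of $f_{\eta,C}$ is well defined because $\eta$ is a permutation, so $\{\eta_i\}=[n]$. Its entries $i-n\operatorname{lev}_C(i)$ represent every residue class mod $n$, so $f_{\eta,C}\in\dschub_n$. The positions $\eta_x,\eta_y$ of two values $x,y$ are distinct, so every unordered pair of positions $\{a,b\}\subseteq[n]$ comes from a unique unordered pair of values $\{x,y\}$.

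\textbf{Pair contributions.} Write $a=\eta_x<b=\eta_y$. The summand in \eqref{eq:bg-affine-window-length} is
\[
\left|\left\lfloor\frac{(y-x)-n(\operatorname{lev}(y)-\operatorname{lev}(x))}{n}\right\rfloor\right|.
\]
Since $0<|y-x|<n$, this equals $|\operatorname{lev}(x)-\operatorname{lev}(y)|$ when $y>x$, and $|\operatorname{lev}(x)-\operatorname{lev}(y)+1|$ when $y<x$. Going through the cases:
\begin{itemize}
\item Same type: the pair contributes $\one_{x>y}$, i.e. an inversion of the position order relative to the value order.
\item Types differing by one level (the $\mathsf{P}\mathsf{B}$ and $\mathsf{B}\mathsf{N}$ cases): the contribution is $0,1$ or $1,2$ depending on the orientation.
\item Types $\mathsf{P}$ and $\mathsf{N}$: the contribution is $1,2$ or $2,3$.
\end{itemize}
I will record each case in the uniform form: with $x$ of the earlier type in the order $\mathsf{P}<\mathsf{B}<\mathsf{N}$, the contribution is
\[
(\text{level difference})+\one_{\eta_x>\eta_y}-\one_{x>y}.
\]
The level difference is $1$ for $\mathsf{P}\mathsf{B}$ and $\mathsf{B}\mathsf{N}$ pairs and $2$ for $\mathsf{P}\mathsf{N}$ pairs.

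\textbf{Summing.} Exactly as in the proof of Proposition~\ref{prop:signed-formulas}, same-type pairs contribute their share of $\inv(\operatorname{PosWord})$. Mixed pairs sit in type order in both words, so they contribute $\one_{\eta_x>\eta_y}-\one_{x>y}$ to $\inv(\operatorname{PosWord})-\inv(\operatorname{ValWord})$. Adding the constants $rk+2ru+ku$ gives \eqref{eq:affine-length-signed}. Subtracting \eqref{eq:signed-al} gives \eqref{eq:affine-length-al}, and Theorem~\ref{thm:cr-al-complement} then gives \eqref{eq:affine-length-cr}.

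\textbf{Main obstacle.} The step I expect to need the most care is the floor computation. For $\mathsf{P}\mathsf{N}$ pairs the sign of $y-x$ must be tracked correctly, and I will check each of the six type-and-orientation cases explicitly. The other point is that formula \eqref{eq:bg-affine-window-length} applies to any $f\in\dschub_n$ regardless of displacement, which holds since length is defined after translating by $\zeta_{-\operatorname{disp}}$ and translation does not change differences of window entries.

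\textbf{Identification with $f_\pi$.} In the bijection of Proposition~\ref{prop:bicolored-admissible-bijection}, position $j=\eta_i$ holds the value $i$, with $\sigma_j=i$. The definition of $f_\pi$ gives:
\begin{itemize}
\item $f_\pi(j)=i-2n$ if $i$ is an unbarred excedance, which is type $\mathsf{P}$;
\item $f_\pi(j)=i-n$ if $i$ is barred, which is type $\mathsf{B}$;
\item $f_\pi(j)=i$ otherwise, which is type $\mathsf{N}$.
\end{itemize}
This matches \eqref{eq:affine-window-from-data}, so $f_{\eta,C}=f_\pi$. Definition~\ref{def:al} then gives $\al{\pi}=\ell(f_\pi)-ru=\operatorname{al}_{\mathsf{B}}(\eta,C)$.
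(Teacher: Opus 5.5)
Your proposal is correct and follows the paper's proof essentially step for step. Like the paper, you sum the per-pair window contribution in the uniform form $\kappa+\one_{\eta_x>\eta_y}-\one_{x>y}$ (with $\kappa$ the level difference) to get \eqref{eq:affine-length-signed}, subtract \eqref{eq:signed-al}, invoke Theorem~\ref{thm:cr-al-complement}, and match the windows of $f_{\eta,C}$ and $f_\pi$ type by type. One slip to fix when you write out the six cases: for $\eta_x<\eta_y$ and $y<x$, the floor equals $\operatorname{lev}_C(x)-\operatorname{lev}_C(y)-1$, so the summand is $\left|\operatorname{lev}_C(x)-\operatorname{lev}_C(y)-1\right|$, not $\left|\operatorname{lev}_C(x)-\operatorname{lev}_C(y)+1\right|$; the uniform formula you record is nevertheless the correct one.
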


\begin{proof}
The values on the right-hand side of
\eqref{eq:affine-window-from-data} are pairwise incongruent modulo $n$,
because their residues are $1,2,\ldots,n$.  Hence the periodic extension
is a bijection of $\mathbb{Z}$ and is an affine permutation.
Its Coxeter length is given by the standard window formula
\begin{equation}\label{eq:affine-window-length-formula}
\ell(f_{\eta,C})
=\sum_{1\leq j_1<j_2\leq n}
\left|
\left\lfloor
\frac{f_{\eta,C}(j_2)-f_{\eta,C}(j_1)}{n}
\right\rfloor
\right|.
\end{equation}

Write
\[
\operatorname{ValWord}(C)=(v_1,\ldots,v_n).
\]
Then
\[
\operatorname{PosWord}(\eta,C)
=(\eta_{v_1},\ldots,\eta_{v_n}).
\]
Fix $1\leq a_1<a_2\leq n$, and put
\[
x:=v_{a_1},\qquad y:=v_{a_2},\qquad
X:=\eta_x,\qquad Y:=\eta_y,
\]
and
\[
\kappa:=\operatorname{lev}_C(x)-\operatorname{lev}_C(y).
\]
The construction of $\operatorname{ValWord}(C)$ implies
$\kappa\geq0$.  Moreover, if $\kappa=0$, then $x$ and $y$ have the
same type and hence $x<y$.

Suppose first that $X<Y$.  The contribution of the positions $X,Y$ to
\eqref{eq:affine-window-length-formula} is
\begin{align*}
\left|
\left\lfloor\frac{f_{\eta,C}(Y)-f_{\eta,C}(X)}{n}\right\rfloor
\right|
&=\left|
\left\lfloor\frac{y-x}{n}+\kappa\right\rfloor
\right|\\
&=\left|\kappa-\one_{x>y}\right|\\
&=\kappa-\one_{x>y}.
\end{align*}
The second equality uses $-(n-1)\leq y-x\leq n-1$ and $x\neq y$.
The expression in the last absolute value is nonnegative: if
$\kappa=0$, then $x<y$, while if $\kappa>0$, then
$\kappa-\one_{x>y}\geq0$.

Suppose next that $X>Y$.  Now the smaller position is $Y$, and its pair
contribution is
\begin{align*}
\left|
\left\lfloor\frac{f_{\eta,C}(X)-f_{\eta,C}(Y)}{n}\right\rfloor
\right|
&=\left|
\left\lfloor\frac{x-y}{n}-\kappa\right\rfloor
\right|\\
&=\left|-\kappa-\one_{x<y}\right|\\
&=\kappa+\one_{x<y}\\
&=\kappa+1-\one_{x>y}.
\end{align*}
Combining the two cases, the contribution of the pair indexed by
$a_1<a_2$ is
\begin{equation}\label{eq:affine-pair-contribution}
\kappa+\one_{X>Y}-\one_{x>y}.
\end{equation}

We sum \eqref{eq:affine-pair-contribution} over all
$a_1<a_2$.  The sum of $\one_{X>Y}$ is
$\inv(\operatorname{PosWord}(\eta,C))$, and the sum of
$\one_{x>y}$ is $\inv(\operatorname{ValWord}(C))$.  A
$\mathsf{P}\mathsf{B}$ pair contributes $1$ to $\kappa$, a
$\mathsf{P}\mathsf{N}$ pair contributes $2$, and a
$\mathsf{B}\mathsf{N}$ pair contributes $1$; same-type pairs contribute
$0$.  Since there are respectively $rk$, $ru$, and $ku$ such pairs,
the total contribution of the $\kappa$-terms is $rk+2ru+ku$.  This
proves \eqref{eq:affine-length-signed}.

Subtracting \eqref{eq:signed-al} from
\eqref{eq:affine-length-signed} gives
\[
\ell(f_{\eta,C})-\operatorname{al}_{\mathsf{B}}(\eta,C)=ru,
\]
which is \eqref{eq:affine-length-al}.  Finally,
Theorem~\ref{thm:cr-al-complement} gives
\[
\operatorname{al}_{\mathsf{B}}(\eta,C)
=\delta_{n,k,r}-\operatorname{cr}_{\mathsf{B}}(\eta,C),
\]
and substituting this into \eqref{eq:affine-length-al} proves
\eqref{eq:affine-length-cr}.

If $\pi$ corresponds to $(\eta,C)$, then the construction preceding
\eqref{eq:affine-length-signed} shows that $f_\pi=f_{\eta,C}$.  Since
$u=n-k-r$, Definition~\ref{def:al} and
\eqref{eq:affine-length-al} give
\[
\al{\pi}
=\ell(f_\pi)-ru
=\ell(f_{\eta,C})-ru
=\operatorname{al}_{\mathsf{B}}(\eta,C),
\]
which proves \eqref{eq:al-positive-formula}.
\end{proof}

\begin{corollary}\label{cor:cr-al-complement-for-pi}
If $\pi\in\coloredperm{n}{k}$ satisfies $\exc{\pi}=r$, then
\[
\cro{\pi}+\al{\pi}=\delta_{n,k,r}.
\]
\end{corollary}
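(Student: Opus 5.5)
This corollary is a transfer of Theorem~\ref{thm:cr-al-complement} from admissible data to bicolored permutations, using the bijection of Proposition~\ref{prop:bicolored-admissible-bijection}. Let $\pi\in\coloredperm{n}{k}$ with $\exc{\pi}=r$. Let $(\eta,C)\in\operatorname{Adm}_{n,k,r}$ be its image under that bijection, so $\eta=\sigma^{-1}$ with $\sigma=|\pi|$, and $C$ is given by \eqref{eq:color-word-from-pi}. The bijection lands in $\operatorname{Adm}_{n,k,r}$ with the same parameters $k$ and $r$, because $C$ has exactly $k$ letters $\mathsf{B}$ and $r$ letters $\mathsf{P}$. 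Hence the constant $\delta_{n,k,r}$ attached to $(\eta,C)$ is the one in the statement.

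Next I identify each statistic of $\pi$ with the corresponding statistic of $(\eta,C)$.
\begin{itemize}
\item For the crossing number this holds by definition: Definition~\ref{def:cr} sets $\cro{\pi}=\operatorname{cr}_{\mathsf{B}}(\eta,C)$.
\item For the alignment number, $\al{\pi}$ was defined in Definition~\ref{def:al} through the affine length $\ell(f_\pi)-r(n-k-r)$. Equation \eqref{eq:al-positive-formula} of Proposition~\ref{prop:affine-length-and-al} gives $\al{\pi}=\operatorname{al}_{\mathsf{B}}(\eta,C)$.
\end{itemize}
The only point worth checking is the claim $f_\pi=f_{\eta,C}$ used there. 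For a value $i$ at position $\eta_i$, the definition of $f_\pi$ assigns:
\begin{itemize}
\item $i-2n$ if $i$ is an unbarred excedance, i.e.\ type $\mathsf{P}$;
\item $i-n$ if $i$ is barred, i.e.\ type $\mathsf{B}$;
\item $i$ otherwise, i.e.\ type $\mathsf{N}$.
\end{itemize}
This matches \eqref{eq:affine-window-from-data} with levels $2,1,0$.

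Finally, adding the two identities and applying Theorem~\ref{thm:cr-al-complement} to $(\eta,C)$ gives
\[
\cro{\pi}+\al{\pi}=\operatorname{cr}_{\mathsf{B}}(\eta,C)+\operatorname{al}_{\mathsf{B}}(\eta,C)=\delta_{n,k,r}.
\]
There is no real obstacle. All the substance sits in Theorem~\ref{thm:cr-al-complement} and Proposition~\ref{prop:affine-length-and-al}, and the only bookkeeping is confirming that the parameters $(n,k,r)$ are preserved by the bijection.
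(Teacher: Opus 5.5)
Your proposal is correct and is the paper's proof: identify $\cro{\pi}=\operatorname{cr}_{\mathsf{B}}(\eta,C)$ by Definition~\ref{def:cr} and $\al{\pi}=\operatorname{al}_{\mathsf{B}}(\eta,C)$ by \eqref{eq:al-positive-formula}, then apply Theorem~\ref{thm:cr-al-complement}. Your level-by-level check that $f_\pi=f_{\eta,C}$ correctly spells out a step the paper only asserts.
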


\begin{proof}
Let $(\eta,C)\in\operatorname{Adm}_{n,k,r}$ correspond to $\pi$.
By Definition~\ref{def:cr} and \eqref{eq:al-positive-formula},
\[
\cro{\pi}=\operatorname{cr}_{\mathsf{B}}(\eta,C),
\qquad
\al{\pi}=\operatorname{al}_{\mathsf{B}}(\eta,C).
\]
The result now follows from Theorem~\ref{thm:cr-al-complement}.
\end{proof}

\begin{proposition}\label{prop:barred-unbarred-relation}
Let $n\geq1$, $1\leq k\leq n$, and $0\leq r\leq n-k$, and set
$u=n-k-r$.  There is a bijection from the set of permutations
$\pi\in\coloredperm{n}{k}$ with $\exc{\pi}=r$ for which the value $1$
is barred to the set of permutations
$\pi^\prime\in\coloredperm{n}{k-1}$ with $\exc{\pi^\prime}=r$ for which the value
$1$ is unbarred.  Under this bijection $\pi\mapsto\pi^\prime$,
\begin{align}
\DEX{\pi^\prime}&=\DEX{\pi},
\label{eq:bar-unbar-dex}\\
\al{\pi}&=\al{\pi^{\prime}}+u,
\label{eq:bar-unbar-al}\\
\ell(f_\pi)&=\ell(f_{\pi^\prime})+u-r.
\label{eq:bar-unbar-affine-length}
\end{align}
Consequently,
\begin{equation}\label{eq:barred-unbarred-function-relation}
\Phi^{\mathrm{bar}}_{n,k,r}(\xxx;q)
=
q^{n-k-r}\Phi^{\mathrm{unb}}_{n,k-1,r}(\xxx;q),
\end{equation}
and hence
\begin{equation}\label{eq:full-reduced-function-relation}
\coloredfunc{n}{k}{r}(\xxx;q)
=
\Phi^{\mathrm{unb}}_{n,k,r}(\xxx;q)
+q^{n-k-r}\Phi^{\mathrm{unb}}_{n,k-1,r}(\xxx;q).
\end{equation}
For $k=0$, the barred-one part $\Phi^{\mathrm{bar}}_{n,0,r}(\xxx;q) = 0$ and thus
$\coloredfunc{n}{0}{r}=\Phi^{\mathrm{unb}}_{n,0,r}$.
\end{proposition}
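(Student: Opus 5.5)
The plan is to build the bijection on affine windows, where $\DEX{\pi}=\DES{\left.f_\pi\right|_{[n]}}$ by property (ii) of the map $F$, and where lengths can be computed with \eqref{eq:affine-length-signed}. First I record what the naive bijection does wrong. Erasing the bar on the value $1$ keeps $1$ as a nonexcedance value, because $1\le\eta_1$. So that map sends $\coloredperm{n}{k}$ to $\coloredperm{n}{k-1}$, preserves $\exc{\cdot}=r$, and turns $u$ into $u+1$. However, in the window it changes the entry $1-n$ to $1$. This can change a comparison with an adjacent barred entry $|\pi_i|-n$ with $|\pi_i|\ge2$, so $\DEX{\cdot}$ need not be preserved. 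The actual bijection must therefore also move the relative order of $1$ among the other letters.

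The map I would try first is a cyclic relabelling of residues. Starting from $f=f_\pi$, subtract $1$ from every window entry except the entry $1-n$ coming from the barred $1$, and give that entry a new value in residue class $n$. Equivalently, compose on the left with a rotation of values, modified at one class, and choose the level of the image class so that the resulting window again satisfies $i-2n<f(i)\le i$. Subtracting a constant from the whole window preserves both $\DES{\cdot}$ and $\ell$, since $\ell(\zeta_a\circ f)=\ell(f)$. So any change in $\DES{\cdot}$ or in $\ell$ comes only from the single modified entry and from the boundary effects at positions $i$ where $f(i)$ sits at the edge $i-2n$ or $i$ of the allowed range. The key steps, in order, are as follows.
\begin{enumerate}
\item Fix the image of the special entry so that the level counts become $|\operatorname{Lev}_{-1}|=k-1$ and $|\operatorname{Lev}_{-2}|=r$, and so that, after relabelling back to $\pi'$, the value $1$ is unbarred.
\item Check that every comparison between adjacent window entries is unchanged. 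This gives \eqref{eq:bar-unbar-dex}.
\item Write down the inverse map and check it, giving bijectivity.
\end{enumerate}

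For the length identities, I would transport the map to admissible data $(\eta,C)\mapsto(\eta',C')$ via Proposition~\ref{prop:bicolored-admissible-bijection}, and use \eqref{eq:affine-length-signed}. The change in $rk+2ru+ku$ under $(k,u)\mapsto(k-1,u+1)$ is $r-u-1+\dots$, and this must be combined with the changes in $\inv(\operatorname{PosWord})$ and $\inv(\operatorname{ValWord})$. I expect the moved value to contribute exactly one shift in $\operatorname{ValWord}$, namely passing from the $\mathsf B$ block to the $\mathsf N$ block, and a controlled shift in $\operatorname{PosWord}$. Summing these should give \eqref{eq:bar-unbar-affine-length}. Then \eqref{eq:bar-unbar-al} follows from Definition~\ref{def:al}:
\[
\al{\pi}-\al{\pi'}=\bigl(\ell(f_\pi)-\ell(f_{\pi'})\bigr)-ru+r(u+1)=(u-r)+r=u.
\]
With the bijection, \eqref{eq:bar-unbar-dex}, and \eqref{eq:bar-unbar-al} in hand, summing $q^{\al{\pi}}\FF_{\DEX{\pi},n}$ over the barred-one set gives \eqref{eq:barred-unbarred-function-relation}. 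Identity \eqref{eq:full-reduced-function-relation} then follows from \eqref{eq:one-bar-status-decomposition}. The case $k=0$ is immediate, because no value is barred.

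The main obstacle is step 2. Designing the single-entry modification so that the descent set of the window is literally unchanged requires care, in particular at adjacent barred entries and at the window boundary. If the simple rotation fails there, I would instead define the map on the disk diagram of Remark~\ref{rem:disk-crossing-diagram}. There I would move the chord of the value $1$ from the $\mathrm{src}^{\mathrm{II}}$–$\mathrm{tgt}^{\mathrm I}$ type to the $\mathrm{src}^{\mathrm{II}}$–$\mathrm{tgt}^{\mathrm{II}}$ type and rotate the labels so that the positions are fixed. I would then verify $\DEX{\cdot}$ directly, and obtain $\cro{\cdot}$ from \eqref{eq:signed-cr}.
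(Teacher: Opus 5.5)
\textbf{The gap: the bijection is never constructed.} The proposition is essentially the bijection, and your proposal does not produce one. The residue rotation you try first does not land in the target set. After you subtract $1$ from the other window entries and apply $F^{-1}$, the value $1$ of $\pi'$ is the old value $2$. Choosing the image of the single special entry cannot control its bar status. Take $n=2$ and $\pi=\overline{1}\,\overline{2}$, so $f_\pi=[-1,0]$. Your rule turns the second entry into $-1$, which is a barred $1$ in $\pi'$ whatever you put at position $1$.

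The boundary problem you anticipate is also fatal rather than cosmetic. A primed entry $(i+1)-2n$ is pushed to $i-2n$, which lies outside the range $i-2n<f(i)\le i$; for example, $\pi=2\,\overline{1}$ gives $f_\pi=[-2,-1]\mapsto[\,\cdot\,,-3]$ at position $1$. So the level counts $k-1$ and $r$ cannot be repaired by adjusting one entry. The disk-diagram fallback is not specified precisely enough to check. Since your steps 2 and 3 and the length computation all presuppose the map, they currently have nothing to act on. Two smaller bookkeeping points: the change of $rk+2ru+ku$ under $(k,u)\mapsto(k-1,u+1)$ is $r+k-u-1$. Moreover, in the correct bijection, $\operatorname{ValWord}$ gains $k-1$ inversions, not ``one shift'', and $\operatorname{PosWord}$ does not move at all.

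\textbf{The missing idea: rotate only the barred values.} Write $\operatorname{Bar}(C)=\{1=b_1<b_2<\cdots<b_k\}$. Set $C'_1=\mathsf N$, $\eta'_1=\eta_{b_k}$, $\eta'_{b_i}=\eta_{b_{i-1}}$ for $2\le i\le k$, and leave everything else unchanged. In $\pi$, this moves $\overline{b_i}$ into the slot of $\overline{b_{i-1}}$ and puts an unbarred $1$ into the slot of $\overline{b_k}$. Admissibility holds because $\eta'_1\ge1$ is automatic.

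In the color order, the ordinary $1$ lies above every barred letter and below every ordinary letter $\ge2$. Hence the letters $\overline{b_1}<\cdots<\overline{b_k}$ at the barred positions are replaced, in the same relative order, by $\overline{b_2}<\cdots<\overline{b_k}<1$. Thus the standardization of $\pi^D$ is unchanged, which is equivalent to $\operatorname{PosWord}(\eta',C')=\operatorname{PosWord}(\eta,C)$, and $\DEX{\pi'}=\DEX{\pi}$ follows. In $\operatorname{ValWord}$, the only change is that $1$ moves past $b_2,\dots,b_k$, which creates exactly $k-1$ inversions.

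Then \eqref{eq:affine-length-signed} gives
\[
\ell(f_{\pi'})-\ell(f_\pi)=(r+k-u-1)+0-(k-1)=r-u,
\]
and your computation of $\al{\pi}-\al{\pi'}=u$ goes through. On the example above, the map sends $\overline{1}\,\overline{2}\mapsto\overline{2}\,1$ and $\overline{2}\,\overline{1}\mapsto 1\,\overline{2}$, preserving $\operatorname{DEX}$. The paper derives the shifts in the other order: it first gets invariance of $\operatorname{cr}_{\mathsf B}$ from \eqref{eq:signed-cr}, then uses $\delta_{n,k-1,r}=\delta_{n,k,r}-u$ for the alignment, and finally obtains the length. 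Either order works once the map is in place.
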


\begin{proof}
Under the bijection $\pi\mapsto(\eta,C)$ in Proposition~\ref{prop:bicolored-admissible-bijection}, the value
$1$ is barred in $\pi$ precisely when $C_1=\mathsf{B}$.  If it is unbarred,
then $C_1$ cannot be $\mathsf{P}$, because the defining inequality
$\eta_1<1$ is impossible.  Thus the value $1$ is unbarred precisely
when $C_1=\mathsf{N}$.  It is therefore enough first to construct a
bijection
\begin{equation}\label{eq:bar-unbar-admissible-bijection}
\begin{split}
\{(\eta,C)\in\operatorname{Adm}_{n,k,r}:C_1=\mathsf{B}\}
\longrightarrow{}&
\{(\eta^\prime,C^\prime)\in\operatorname{Adm}_{n,k-1,r}:\nobreak
 C^\prime_1=\mathsf{N}\}.
\end{split}
\end{equation}

Fix a pair $(\eta,C)$ in the set on the left and write
\[
\operatorname{Bar}(C)=\{b_1<b_2<\cdots<b_k\}.
\]
Since $C_1=\mathsf{B}$, we have $b_1=1$.  Define $C^\prime$ by
\[
C^\prime_1=\mathsf{N},
\qquad
C^\prime_i=C_i\quad(2\leq i\leq n),
\]
and define $\eta^\prime$ by
\begin{align}
\eta^\prime_1&=\eta_{b_k},
\label{eq:bar-unbar-eta-prime-first}\\
\eta^\prime_{b_i}&=\eta_{b_{i-1}}
\qquad(2\leq i\leq k),
\label{eq:bar-unbar-eta-prime-bars}\\
\eta^\prime_x&=\eta_x
\qquad\bigl(x\notin\operatorname{Bar}(C)\bigr).
\label{eq:bar-unbar-eta-prime-other}
\end{align}
When $k=1$, these equations simply say that $C^\prime_1=\mathsf{N}$ and
$\eta^\prime=\eta$, with all other entries unchanged.

The entries of $\eta^\prime$ are obtained by cyclically permuting the entries
$\eta_{b_1},\ldots,\eta_{b_k}$ of $\eta$, so $\eta^\prime$ belongs to
$\fS_n$.  Every value of type $\mathsf{P}$ has the same type
and the same $\eta$-entry after the operation, and the same is true of
every value that was of type $\mathsf{N}$.  The only new
$\mathsf{N}$-value is $1$, for which the required inequality
$\eta^\prime_1\geq1$ holds automatically.  Hence $(\eta^\prime,C^\prime)$ is
admissible (i.e. $(\eta^\prime,C^\prime)$ satisfies conditions (1) and (3) in Definition~\ref{def:admissible-inverse-data}).  Its numbers of letters of types $\mathsf{P}$,
$\mathsf{B}$, and $\mathsf{N}$ are respectively $r$, $k-1$, and
$u+1$, so it lies in the set on the right of
\eqref{eq:bar-unbar-admissible-bijection}.

For completeness, the inverse map is explicit.  Given a pair on the
right of \eqref{eq:bar-unbar-admissible-bijection}, if $k\geq2$, list
\[
\operatorname{Bar}(C^\prime)=\{b_2<\cdots<b_k\}
\]
and put $b_1=1$.  If $k=1$, put $b_1=1$ and regard the list
$b_2,\ldots,b_k$ as empty.  Change $C^\prime_1=\mathsf{N}$ to
$C_1=\mathsf{B}$, leave all other letters unchanged, and set
\begin{align*}
\eta_{b_i}&=\eta^\prime_{b_{i+1}}
&& (1\leq i\leq k-1),\\
\eta_{b_k}&=\eta^\prime_1,\\
\eta_x&=\eta^\prime_x
&&\bigl(x\notin\{b_1,\ldots,b_k\}\bigr).
\end{align*}
If $k=1$, the displayed list of barred values $\operatorname{Bar}(C^\prime)$ is empty and we have $b_1=1$ and $\eta_1=\eta^\prime_1$.  In either case the values newly
declared barred are subject to no admissibility inequality, while all
remaining inequalities are unchanged.  Direct substitution in
\eqref{eq:bar-unbar-eta-prime-first}--
\eqref{eq:bar-unbar-eta-prime-other} shows that the two constructions
are inverse.  This proves that \eqref{eq:bar-unbar-admissible-bijection}
is a bijection.

We next verify the statistics.  Write
\begin{align*}
\operatorname{Prim}(C)&=\{p_1<\cdots<p_r\},\\
\operatorname{NonExc}(C)&=\{j_1<\cdots<j_u\}.
\end{align*}
The value $1$ belongs to neither set.  By
\eqref{eq:bar-unbar-eta-prime-first}--
\eqref{eq:bar-unbar-eta-prime-other},
\begin{align}\label{eq:posword}
\operatorname{PosWord}(\eta^\prime,C^\prime)
={}&(\eta_{p_1},\ldots,\eta_{p_r},
      \eta_{b_1},\eta_{b_2},\ldots,\eta_{b_k},
      \eta_{j_1},\ldots,\eta_{j_u})\\
={}&\operatorname{PosWord}(\eta,C).\notag
\end{align}
Indeed, in the first line the entries
$\eta_{b_1},\ldots,\eta_{b_{k-1}}$ form the new barred block of $\operatorname{PosWord}(\eta^\prime,C^\prime)$, and
$\eta_{b_k}=\eta^\prime_1$ is the first entry of the new nonexcedance block of $\operatorname{PosWord}(\eta^\prime,C^\prime)$.

Let $\rho\in\symm_n$ be the standardization of the word $\pi^D$ in the ordered
alphabet of Definition~\ref{def:DEX}, and write
\[
\operatorname{ValWord}(C)=(v_1,\ldots,v_n).
\]
The sequence $v_1,\ldots,v_n$ records the underlying values of the
letters of $\pi^D$ in the increasing alphabet order of Definition~\ref{def:DEX}, and the position of
the letter with underlying value $v_a$ is $\eta_{v_a}$.  Consequently,
for every $a\in[n]$,
\[
\rho_{\eta_{v_a}}=a.
\]
Since
$\operatorname{PosWord}(\eta,C)
=(\eta_{v_1},\ldots,\eta_{v_n})$, this is equivalent to
\[
\rho=\operatorname{PosWord}(\eta,C)^{-1}.
\]
Standardization preserves the descent set.  Applying the same argument
to $\pi^\prime$ and using the equality~\eqref{eq:posword} of the two position words gives
\[
\DEX{\pi}
=\DES{\operatorname{PosWord}(\eta,C)^{-1}}
=\DES{\operatorname{PosWord}(\eta^\prime,C^\prime)^{-1}}
=\DEX{\pi^\prime},
\]
which proves \eqref{eq:bar-unbar-dex}.

The word $\operatorname{ValWord}(C^\prime)$ is obtained from
$\operatorname{ValWord}(C)$ by moving the entry $1$ to the right past
the $k-1$ entries $b_2,\ldots,b_k$, without changing any other relative
order.  Since each $b_i>1$ ($i=2,\dots,k$), this creates exactly $k-1$ inversions.
Thus
\begin{equation}\label{eq:bar-unbar-valword-inversions}
\inv\bigl(\operatorname{ValWord}(C^\prime)\bigr)
=
\inv\bigl(\operatorname{ValWord}(C)\bigr)+k-1.
\end{equation}
Using \eqref{eq:signed-cr}, the equality~\eqref{eq:posword} of the position words, and
\eqref{eq:bar-unbar-valword-inversions}, we obtain
\begin{align*}
\operatorname{cr}_{\mathsf{B}}(\eta^\prime,C^\prime)
={}&
\binom{k-1}{2}-r
+\inv\bigl(\operatorname{ValWord}(C^\prime)\bigr)
 -\inv\bigl(\operatorname{PosWord}(\eta^\prime,C^\prime)\bigr)\\
={}&
\binom{k-1}{2}-r
+\inv\bigl(\operatorname{ValWord}(C)\bigr)+k-1
 -\inv\bigl(\operatorname{PosWord}(\eta,C)\bigr)\\
={}&
\binom{k}{2}-r
+\inv\bigl(\operatorname{ValWord}(C)\bigr)
 -\inv\bigl(\operatorname{PosWord}(\eta,C)\bigr)\\
={}&\operatorname{cr}_{\mathsf{B}}(\eta,C).
\end{align*}
The number of nonexcedance values in the target is $u+1$, and hence
\begin{align*}
\delta_{n,k-1,r}
&=
\binom{n}{2}-\binom{r+1}{2}-\binom{u+1}{2}\\
&=\delta_{n,k,r}-u.
\end{align*}
Theorem~\ref{thm:cr-al-complement} now yields
\begin{align*}
\operatorname{al}_{\mathsf{B}}(\eta,C)
&=\delta_{n,k,r}-\operatorname{cr}_{\mathsf{B}}(\eta,C)\\
&=\delta_{n,k-1,r}+u-
  \operatorname{cr}_{\mathsf{B}}(\eta^\prime,C^\prime)\\
&=\operatorname{al}_{\mathsf{B}}(\eta^\prime,C^\prime)+u.
\end{align*}
Equation~\eqref{eq:al-positive-formula} proves
\eqref{eq:bar-unbar-al}.  Since the source has $u$ nonexcedance values
and the target has $u+1$, Definition~\ref{def:al} gives
\begin{align*}
\ell(f_\pi)-ru
&=\ell(f_{\pi^\prime})-r(u+1)+u,
\end{align*}
which is equivalent to \eqref{eq:bar-unbar-affine-length}.

Finally, \eqref{eq:bar-unbar-dex} and \eqref{eq:bar-unbar-al} show that
the summand indexed by $\pi$ in \eqref{eq:barred-one-part} is $q^u$
times the summand indexed by $\pi^\prime$ in
\eqref{eq:unbarred-one-part}.  Summing over the bijection gives
\eqref{eq:barred-unbarred-function-relation}.  Equation
\eqref{eq:full-reduced-function-relation} follows from
\eqref{eq:one-bar-status-decomposition}; the assertion for $k=0$ is
immediate from the definition of the barred-one part.
\end{proof}

\begin{corollary}\label{cor:minimum-affine-length}
For every $(\eta,C)\in\operatorname{Adm}_{n,k,r}$ and every
corresponding bicolored permutation $\pi\in\coloredperm{n}{k}$, one has
\[
\ell(f_{\eta,C})\geq ru
\qquad\text{and}\qquad
\al{\pi}\geq0.
\]
If $\operatorname{Adm}_{n,k,r}$ is nonempty, then
\[
\min_{(\eta,C)\in\operatorname{Adm}_{n,k,r}}
\ell(f_{\eta,C})=ru.
\]
Equivalently, the minimum value of
$\operatorname{al}_{\mathsf{B}}$ on
$\operatorname{Adm}_{n,k,r}$ is $0$.  Under the bijection in
Proposition~\ref{prop:bicolored-admissible-bijection}, this is also the
statement that
\[
\min_{\substack{\pi\in\coloredperm{n}{k}\\\exc{\pi}=r}}
\al{\pi}=0.
\]
\end{corollary}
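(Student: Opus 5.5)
The lower bound needs no new work. By \eqref{eq:affine-length-al} in Proposition~\ref{prop:affine-length-and-al}, $\ell(f_{\eta,C})=ru+\operatorname{al}_{\mathsf{B}}(\eta,C)$. The alignment statistic of Definition~\ref{def:cr} is a sum of cardinalities and indicator functions, so it is nonnegative. Hence $\ell(f_{\eta,C})\geq ru$. By \eqref{eq:al-positive-formula}, $\al{\pi}=\operatorname{al}_{\mathsf{B}}(\eta,C)\geq 0$. It therefore suffices to exhibit, whenever $\operatorname{Adm}_{n,k,r}\neq\varnothing$, a pair $(\eta,C)$ with $\operatorname{al}_{\mathsf{B}}(\eta,C)=0$. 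The statements about $\al{\pi}$ then follow from Proposition~\ref{prop:bicolored-admissible-bijection}.

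For the construction I will use \eqref{eq:signed-al}, which reads
\[
\operatorname{al}_{\mathsf{B}}=rk+ru+ku+\inv(\operatorname{PosWord})-\inv(\operatorname{ValWord}).
\]
Alternatively I can read zero-alignment conditions directly from Definition~\ref{def:cr}. Vanishing requires three things. First, within each type, $\eta$ is increasing on the values of that type. Second, for every cross-type pair with the type of $x$ earlier in the order $\mathsf{P}<\mathsf{B}<\mathsf{N}$, we need $x>y$ and $\eta_x<\eta_y$. Third, consequently every $\mathsf{P}$-value exceeds every $\mathsf{B}$-value, which exceeds every $\mathsf{N}$-value.

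These conditions force the value sets
\[
\operatorname{NonExc}=\{1,\dots,u\},\qquad \operatorname{Bar}=\{u+1,\dots,u+k\},\qquad \operatorname{Prim}=\{u+k+1,\dots,n\}.
\]
They also force the position sets: the primed values occupy positions $1,\dots,r$, the barred values positions $r+1,\dots,r+k$, and the nonexcedance values positions $r+k+1,\dots,n$, each increasingly. So $\eta_{u+k+i}=i$, $\eta_{u+b}=r+b$, and $\eta_j=r+k+j$.

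It remains to check admissibility. For a primed value, $\eta_{u+k+i}=i<u+k+i$ holds provided $u+k\geq1$. For a nonexcedance value, $\eta_j=r+k+j\geq j$ always holds. Barred values impose no condition. The only failure is $u+k=0$ with $r\ge1$, that is $(k,r)=(0,n)$. This is exactly the case where the indexing set is empty, by the Remark after Definition~\ref{def:bicolored-eulerian-qsf}; equivalently, no $\eta$ has all $\eta_i<i$, since $\eta_1<1$ is impossible. Hence whenever $\operatorname{Adm}_{n,k,r}$ is nonempty this pair lies in it and has alignment $0$, so the minimum of $\ell(f_{\eta,C})$ is $ru$.

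The only step needing care is confirming that every one of the six families in Definition~\ref{def:cr} vanishes for this pair. Same-type pairs give $0$ because $\eta$ is increasing within each block. Each cross-type pair has $\one_{x<y}=0$ and $\one_{\eta_x>\eta_y}=0$ by the block placement.
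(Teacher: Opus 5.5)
Your proposal is correct and follows essentially the same route as the paper. Both get the lower bound from \eqref{eq:affine-length-al} and the nonnegativity of $\operatorname{al}_{\mathsf{B}}$. Both then exhibit the same block minimizer, with $\operatorname{Prim}(C)=\{u+k+1,\dots,n\}$, $\operatorname{Bar}(C)=\{u+1,\dots,u+k\}$ and $\operatorname{NonExc}(C)=\{1,\dots,u\}$ at increasing consecutive positions, which is admissible once $k+u>0$. Your additional observation that the vanishing conditions force this pair, so the minimizer is unique, is also correct but not needed.
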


\begin{proof}
Equation~\eqref{eq:affine-length-al} and the nonnegative definition of
$\operatorname{al}_{\mathsf{B}}$ give
$\ell(f_{\eta,C})\geq ru$ for every admissible pair.
Equation~\eqref{eq:al-positive-formula} then gives
$\al{\pi}\geq0$ for every corresponding bicolored
permutation.

Since $\operatorname{Adm}_{n,k,r}$ is nonempty, not every value can have
type $\mathsf{P}$: indeed, $\eta_1<1$ is impossible.  Thus $k+u>0$.
Consider the bicolored permutation whose underlying permutation is
\[
\sigma=
(k+u+1,k+u+2,\ldots,n,
 u+1,u+2,\ldots,u+k,
 1,2,\ldots,u),
\]
where the middle block is barred and the other two blocks are unbarred.
Empty blocks are omitted.  Its associated admissible data satisfy
\begin{align*}
\operatorname{Prim}(C)&=\{k+u+1,\ldots,n\},
&\eta_{k+u+j}&=j &&(1\leq j\leq r),\\
\operatorname{Bar}(C)&=\{u+1,\ldots,u+k\},
&\eta_{u+j}&=r+j &&(1\leq j\leq k),\\
\operatorname{NonExc}(C)&=\{1,\ldots,u\},
&\eta_j&=r+k+j &&(1\leq j\leq u).
\end{align*}
The positions are increasing within each type.  Moreover, for every
pair of different types, the value of the earlier type in the order
$\mathsf{P}<\mathsf{B}<\mathsf{N}$ is larger, while its position is
smaller.  Hence every indicator occurring in the definition of
$\operatorname{al}_{\mathsf{B}}$ is zero.  Thus
$\operatorname{al}_{\mathsf{B}}(\eta,C)=0$, and
\eqref{eq:affine-length-al} gives $\ell(f_{\eta,C})=ru$.
Equation~\eqref{eq:al-positive-formula} then also gives
$\al{\pi}=0$ for the corresponding bicolored
permutation.
\end{proof}

\section{Proofs of the Theta formulas}
\label{sec:proofs-theta-formulas}

We prove Theorems~\ref{conj:unbarred-one-theta} and
\ref{conj:bicolored-theta}. Fix $n\geq1$, $k,r\geq0$ with $u=n-k-r\geq0$. We further assume that $u\ge 1$ until finishing the proof of Theorem~\ref{conj:unbarred-one-theta}. Write
\[
\operatorname{Red}_{n,k,r}
:=\{\pi\in\coloredperm{n}{k}:\exc{\pi}=r
       \text{ and the value $1$ is unbarred}\}.
\]
For $\pi\in\operatorname{Red}_{n,k,r}$, retain the notation
$\sigma=\sigma_1\dots\sigma_n\in\symm_n$ obtained from $\pi$ by erasing all the bars, $\eta=\sigma^{-1}$, and $C=C_1\dots C_n$ given by
\[C_i\coloneqq\begin{cases}
    \mathsf{B}, &\text{if the letter $i$ is barred in $\pi$} \\
    \mathsf{P}, &\text{if the letter $i$ is unbarred in $\pi$ and $\eta_i<i$} \\
    \mathsf{N}, &\text{if the letter $i$ is unbarred in $\pi$ and $\eta_i\ge i$}
\end{cases}\]
which occur in the bijection $\pi\mapsto(\eta,C)$ in
Proposition~\ref{prop:bicolored-admissible-bijection}. In particular,
$C_1=\mathsf{N}$. All sets of values of the same type in this section are the sets
$\operatorname{Prim}(C)$, $\operatorname{Bar}(C)$, and
$\operatorname{NonExc}(C)$ of Definition~\ref{def:admissible-inverse-data}.
The statistics $\cro{\pi}$ and $\al{\pi}$
retain exactly their earlier normalizations.

The key step is to construct a bijection from
$\operatorname{Red}_{n,k,r}$ to segmented permutations with
$u-1$ ascents and $r$ descents, matching crossings with
sminversions and DEX sets with complementary Split sets; see the next subsection for the definitions of segmented permutations, sminversions, and Split sets.
We factor this bijection through Lentfer's monomial codes:
our main construction encodes bicolored permutations by these
codes, after which Lentfer's insertion bijection
\cite{Lentfer2025} produces the required segmented permutations. This gives Theorem~\ref{conj:unbarred-one-theta}.
Theorem~\ref{conj:bicolored-theta} then follows from the relation between
the barred-one and unbarred-one parts.

\subsection{Segmented permutations and the known Theta expansion}
\label{subsec:theta-segmented-preliminaries}

All definitions and results in this subsection arise from Iraci--Nadeau--Vanden Wyngaerd~
\cite{MR4766722}. A \emph{segmented permutation} of $[n]$ is a permutation
$w_1\cdots w_n$ together with a choice of separators between consecutive
entries, which separates $w$ into \emph{nonempty} blocks. Its nonempty blocks are called segments. Only adjacent pairs
within the same segment contribute to its numbers of \emph{ascents} and \emph{descents},
denoted by $\operatorname{asc}(w)\coloneqq\#\{i\in[n-1]\,:\,\text{$w_i<w_{i+1}$, the positions $i,i+1$ belong to the same segment}\}$ and $\operatorname{des}(w)\coloneqq\#\{i\in[n-1]\,:\,\text{$w_i>w_{i+1}$, the positions $i,i+1$ belong to the same segment}\}$.
For $d_1,d_2\geq0$ with $d_1+d_2<n$, write
\[
\operatorname{SW}(1^n;d_1,d_2)
:=\{w:\operatorname{asc}(w)=d_1,
          \operatorname{des}(w)=d_2\}.
\]
Every member of this set has $n-d_1-d_2$ segments.

An entry of $w$ is \emph{thick} if it starts a segment or is the smaller
entry of an adjacent descent within its segment; it is \emph{thin}
otherwise. These are properties of the entries, rather than their positions. Define $\operatorname{Split}(w)\subseteq[n-1]$
by declaring that $i\in\operatorname{Split}(w)$ precisely in one of
the following cases:
\begin{enumerate}[label=\textup{(\roman*)}]
\item the entry $i$ is thick and the entry $i+1$ is thin;
\item both entries are thin, and $i$ occurs to the left of $i+1$;
\item both entries are thick, and $i+1$ occurs to the left of $i$.
\end{enumerate}
Let $\operatorname{Start}(w)\subseteq[n]$ be the set of positions
starting segments. Define
\begin{equation}\label{eq:theta-sminv-definition}
\begin{split}
\operatorname{sminv}(w):=\#\{(a,b):{}&1\leq a<b\leq n,\ w_a>w_b,\\
&b\in\operatorname{Start}(w)\text{ or }
 (b\notin\operatorname{Start}(w)\text{ and }w_{b-1}>w_a)\}.
\end{split}
\end{equation}
The pairs counted here are called \emph{sminversions}. This is the specialization
of the Smirnov inversion statistic in Iraci--Nadeau--Vanden Wyngaerd~\cite{MR4766722} to words with distinct entries.

We use the following result of Iraci--Nadeau--Vanden Wyngaerd, combining
\cite[Theorem~3.1 and Proposition~5.3]{MR4766722}:
\begin{equation}\label{eq:theta-known-smirnov-expansion}
\sum_{w\in\operatorname{SW}(1^n;d_1,d_2)}
z^{\operatorname{sminv}(w)}
\FF_{\operatorname{Split}(w),n}(\xxx)
=
\left.
\Theta_{e_{d_1}}\Theta_{e_{d_2}}
\widetilde H_{(n-d_1-d_2)}
\right|_{t=0,\,q=z}.
\end{equation}
Here $z$ is an auxiliary grading variable.

\subsection{A prefix code for the unbarred-one part}
\label{subsec:theta-prefix-code}

We define a set $\operatorname{Code}_n$ equal to the set of exponent sequences of monomials introduced by \cite[Definition 3.1]{Lentfer2025}. Let $\operatorname{Code}_n$ consist of the sequences
\[
\mathbf c=((\alpha_i,\beta_i,\gamma_i))_{i=1}^n,
\qquad \alpha_i\in\mathbb Z_{\geq0},\quad
\beta_i,\gamma_i\in\{0,1\},
\]
such that
\begin{equation}\label{eq:theta-code-bounds}
\begin{gathered}
(\alpha_1,\beta_1,\gamma_1)=(0,0,0),\\
\operatorname{cap}_i=\operatorname{cap}_i(\mathbf c)
:=i-1-\sum_{a=1}^i(\beta_a+\gamma_a),
\qquad 0\leq\alpha_i\leq\operatorname{cap}_i
\quad(1\leq i\leq n).
\end{gathered}
\end{equation}
In particular, $\operatorname{cap}_1=0$, and
\begin{equation}\label{eq:theta-code-capacity-recursion}
\operatorname{cap}_i
=\operatorname{cap}_{i-1}+1-\beta_i-\gamma_i
\qquad(2\leq i\leq n).
\end{equation}
The subset $\operatorname{Code}_{n,k,r}$ is defined by the additional
conditions
\[
\sum_{i=1}^n\beta_i=u-1,
\qquad \sum_{i=1}^n\gamma_i=r.
\]
Thus $\operatorname{cap}_n=k$ for every code in this subset.

Fix $\pi\in\operatorname{Red}_{n,k,r}$, and use its associated
permutation $\sigma$, inverse permutation $\eta=\sigma^{-1}$, and type word $C$ as mentioned at the beginning of Section~\ref{sec:proofs-theta-formulas}.
Define
\begin{equation}\label{eq:theta-code-bits}
\beta_i:=\one_{\sigma_i\in\operatorname{NonExc}(C)\setminus\{1\}},
\qquad
\gamma_i:=\one_{i\in\operatorname{Prim}(C)}.
\end{equation}
The subscript of $\beta_i$ specifies a position in $\sigma$, whereas
the subscript of $\gamma_i$ specifies a value. For $1\leq i\leq n$,
define a set of position indices and a set of values, respectively, by
\begin{align}
\mathcal{P}\mathrm{os}_i
&:=\{a\in[i-1]:\sigma_a\in\operatorname{Bar}(C)
                         \text{ or }\sigma_a\notin[2,i]\},
\label{eq:theta-position-set}\\
\mathcal{V}\mathrm{al}_i
&:=\{v\in[2,i]:v\in\operatorname{Bar}(C)\text{ or }\eta_v>i\}.
\label{eq:theta-available-values}
\end{align}
We take $[0]=[2,1]=\varnothing$, so that
$\mathcal{P}\mathrm{os}_1=\mathcal{V}\mathrm{al}_1=\varnothing$.

We define the remaining coordinate directly by counting elements
of these two sets:
\begin{equation}\label{eq:theta-code-coordinate}
\alpha_i:=
\begin{cases}
\#\{a\in\mathcal{P}\mathrm{os}_i:\pi_a^D<\pi_i^D\},&\beta_i=0,\\[1mm]
\#\{v\in\mathcal{V}\mathrm{al}_i:v>\sigma_i\},&\beta_i=1.
\end{cases}
\end{equation}
The first comparison is in the ordered alphabet of
Definition~\ref{def:DEX}; the second is a comparison of integers.
Write $\operatorname{Enc}(\pi)=((\alpha_i,\beta_i,\gamma_i))_{i=1}^n$.

\begin{lemma}\label{lem:theta-code-ranks}
The sequence $\operatorname{Enc}(\pi)$ belongs to
$\operatorname{Code}_{n,k,r}$. More precisely, for every $i\in[n]$,
\begin{equation}\label{eq:theta-pos-val-cardinalities}
|\mathcal{P}\mathrm{os}_i|=\operatorname{cap}_i+\beta_i,
\qquad |\mathcal{V}\mathrm{al}_i|=\operatorname{cap}_i.
\end{equation}
Furthermore, for $2\leq i\leq n$,
\begin{equation}\label{eq:theta-available-set-transition}
\mathcal{V}\mathrm{al}_i
=\left(\mathcal{V}\mathrm{al}_{i-1}\cup
 \begin{cases}\{i\},&\gamma_i=0,\\
                \varnothing,&\gamma_i=1
 \end{cases}\right)
 \setminus
 \begin{cases}\{\sigma_i\},&\beta_i=1,\\
                \varnothing,&\beta_i=0.
 \end{cases}
\end{equation}
\end{lemma}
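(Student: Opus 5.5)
The plan is to compute both cardinalities in \eqref{eq:theta-pos-val-cardinalities} by counting complements inside $[2,i]$ and $[i-1]$. We then derive \eqref{eq:theta-available-set-transition} by tracking which values change status when $i-1$ is replaced by $i$. Membership in $\operatorname{Code}_{n,k,r}$ then follows quickly. Throughout, two facts from admissibility are used repeatedly. First, a value $p\in\operatorname{Prim}(C)$ satisfies $\eta_p<p$. Second, a value $j\in\operatorname{NonExc}(C)$ occurring at position $a=\eta_j$ satisfies $j\le a$. Unwinding \eqref{eq:theta-code-bits} gives
\[
\operatorname{cap}_i=i-1-\#\{a\le i:\sigma_a\in\operatorname{NonExc}(C)\setminus\{1\}\}-\#\{p\in\operatorname{Prim}(C):p\le i\}.
\]

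\emph{The set $\mathcal{V}\mathrm{al}_i$.} Its complement in $[2,i]$ consists of the unbarred values $v\in[2,i]$ with $\eta_v\le i$. Every $p\in\operatorname{Prim}(C)$ with $p\le i$ lies in this complement, since $\eta_p<p\le i$. The nonexcedance values in the complement are exactly the $\sigma_a$ with $a\le i$ and $\sigma_a\in\operatorname{NonExc}(C)\setminus\{1\}$. Indeed, such a value has $\sigma_a\le a\le i$ automatically, and $1\notin[2,i]$. Hence $|\mathcal{V}\mathrm{al}_i|=(i-1)-\#\{a\le i:\beta_a=1\}-\#\{p\le i:\gamma_p=1\}=\operatorname{cap}_i$.

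\emph{The set $\mathcal{P}\mathrm{os}_i$.} Its complement in $[i-1]$ consists of the positions $a\le i-1$ carrying an unbarred value $\sigma_a\in[2,i]$. Primed values $p\le i$ contribute all of their positions, because $\eta_p\le i-1$. Nonexcedance values contribute exactly the positions $a\le i-1$ with $\beta_a=1$. Therefore $|\mathcal{P}\mathrm{os}_i|=i-1-\#\{p\le i:\gamma_p=1\}-\#\{a\le i-1:\beta_a=1\}=\operatorname{cap}_i+\beta_i$.

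\emph{The transition.} Passing from $i-1$ to $i$ adds the candidate value $i$ and tightens the condition from $\eta_v>i-1$ to $\eta_v>i$. The tightening removes only an unbarred $v\le i-1$ with $\eta_v=i$, that is, $v=\sigma_i$. I will split into cases according to the type of $\sigma_i$.
\begin{enumerate}[label=\textup{(\alph*)}]
\item If $\sigma_i$ is primed, then $\sigma_i>i$ and nothing is removed.
\item If $\sigma_i$ is barred, the barred clause keeps it and nothing is removed.
\item If $\sigma_i=1$, it is outside $[2,i]$ and nothing is removed.
\item If $\sigma_i\in\operatorname{NonExc}(C)\setminus\{1\}$, so that $\beta_i=1$, then $\sigma_i\le i$. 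If $\sigma_i<i$, it is removed.
\end{enumerate}
The new value $i$ enters $\mathcal{V}\mathrm{al}_i$ unless $i$ is primed ($\gamma_i=1$) or $i$ is nonexcedance with $\eta_i=i$. The latter is exactly the subcase $\sigma_i=i$ of case (d). That subcase is consistent with the formula ``add $i$, then remove $\sigma_i=i$''. This yields \eqref{eq:theta-available-set-transition}.

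\emph{Code membership.} At $i=1$, the value $1$ has type $\mathsf{N}$, so $\gamma_1=0$. A nonexcedance value at position $1$ must equal $1$, so $\beta_1=0$. Since $\mathcal{P}\mathrm{os}_1=\varnothing$, also $\alpha_1=0$. The counts are $\sum\beta_i=|\operatorname{NonExc}(C)\setminus\{1\}|=u-1$ and $\sum\gamma_i=r$. For the bound $0\le\alpha_i\le\operatorname{cap}_i$ (nonnegativity of $\operatorname{cap}_i$ holds because it is a cardinality), there are two cases. If $\beta_i=0$, then $\alpha_i\le|\mathcal{P}\mathrm{os}_i|=\operatorname{cap}_i$. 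If $\beta_i=1$, then $\sigma_i$ is unbarred with $\eta_{\sigma_i}=i\not>i$, so $\sigma_i\notin\mathcal{V}\mathrm{al}_i$; hence $\alpha_i\le|\mathcal{V}\mathrm{al}_i|=\operatorname{cap}_i$.

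The main obstacle is purely bookkeeping: keeping positions and values straight, and checking the boundary subcases $\sigma_i=i$ and $\sigma_i=1$. None of the steps requires a new idea.
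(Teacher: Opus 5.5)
Your proof is correct and follows essentially the paper's route: your complement counts inside $[i-1]$ and $[2,i]$ are exactly the disjoint partitions the paper uses to get $|\mathcal{P}\mathrm{os}_i|=\operatorname{cap}_i+\beta_i$ and $|\mathcal{V}\mathrm{al}_i|=\operatorname{cap}_i$, and your code-membership checks coincide with the paper's. The differences are minor. You derive the transition rule by a direct membership case split on the type of $\sigma_i$, whereas the paper reads it off by comparing the value partition at $i-1$ and at $i$. Also, your remark that $\sigma_i\notin\mathcal{V}\mathrm{al}_i$ is not needed, since $\alpha_i$ already counts a subset of $\mathcal{V}\mathrm{al}_i$.
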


\begin{proof}
By \eqref{eq:theta-position-set}, we claim that
\begin{equation}\label{eq:theta-pos-partition}
    [i-1]=\mathcal{P}\mathrm{os}_i\sqcup\{a\in[i-1]\,:\,\beta_a=1\}\sqcup\{\eta_v:v\in\operatorname{Prim}(C)\cap[i]\}.
\end{equation}
In fact, this is because
\begin{align*}
    [i-1]\setminus\mathcal{P}\mathrm{os}_i &= \{a\in[i-1]:\text{$\sigma_a\notin\operatorname{Bar}(C)$ and $\sigma_a\in[2,i]$}\} \\
    &= \{a\in[i-1]:\text{$\sigma_a\notin\operatorname{Bar}(C)$ and $\sigma_a\in[2,i]$ and $\sigma_a\le a$}\}\sqcup \\
    &\quad\{a\in[i-1]:\text{$\sigma_a\notin\operatorname{Bar}(C)$ and $\sigma_a\in[2,i]$ and $\sigma_a>a$}\} \\
    &= \{a\in[i-1]:\text{$\sigma_a\notin\operatorname{Bar}(C)$ and $\sigma_a\in[2,i]$ and $\sigma_a\le a$}\}\sqcup \\
    &\quad\{a\in[n]:\text{$\sigma_a\notin\operatorname{Bar}(C)$ and $\sigma_a\in[2,i]$ and $\sigma_a>a$}\} \\
    &=\{a\in[i-1]:\beta_a=1\}\sqcup\{\eta_v:v\in\operatorname{Prim}(C)\cap[2,i]\}\\
    &=\{a\in[i-1]:\beta_a=1\}\sqcup\{\eta_v:v\in\operatorname{Prim}(C)\cap[i]\}.
\end{align*}
Therefore, Equation~\eqref{eq:theta-pos-partition} gives
\begin{align*}
|\mathcal{P}\mathrm{os}_i|
&=i-1-\sum_{a<i}\beta_a
   -|\operatorname{Prim}(C)\cap[i]|\\
&=i-1-\sum_{a<i}\beta_a-\sum_{a\leq i}\gamma_a
 =\operatorname{cap}_i+\beta_i.
\end{align*}

To count the values in $\mathcal{V}\mathrm{al}_i$, we claim that the following union is disjoint:
\begin{equation}\label{eq:theta-value-partition}
[2,i]
=\mathcal{V}\mathrm{al}_i
 \sqcup\{\sigma_a:a\leq i,\ \beta_a=1\}
 \sqcup\bigl(\operatorname{Prim}(C)\cap[i]\bigr).
\end{equation}
Indeed, we have
\begin{align*}
    [2,i]\setminus\mathcal{V}\mathrm{al}_i &=\{v\in[2,i]:v\notin\operatorname{Bar}(C)\text{ and }\eta_v\le i\} \\
    &=\{v\in[2,i]:v\notin\operatorname{Bar}(C)\text{ and }\eta_v\le i\text{ and }\eta_v\ge v\}\sqcup \\
    &\quad\{v\in[2,i]:v\notin\operatorname{Bar}(C)\text{ and }\eta_v\le i\text{ and }\eta_v< v\} \\
    &=\{v\in[2,n]:v\notin\operatorname{Bar}(C)\text{ and }\eta_v\le i\text{ and }\eta_v\ge v\}\sqcup \\
    &\quad\{v\in[i]:v\notin\operatorname{Bar}(C)\text{ and }\eta_v< v\} \\
    &=\{\sigma_a:a\leq i,\ \beta_a=1\}\sqcup\bigl(\operatorname{Prim}(C)\cap[i]\bigr),
\end{align*}
so Equation~\eqref{eq:theta-value-partition} holds, which gives
\[
|\mathcal{V}\mathrm{al}_i|
=i-1-\sum_{a\leq i}\beta_a-\sum_{a\leq i}\gamma_a
=\operatorname{cap}_i.
\]

If $\beta_i=0$, then $\alpha_i$ counts a subset of
$\mathcal{P}\mathrm{os}_i$, whose cardinality is $\operatorname{cap}_i$.
If $\beta_i=1$, then $\alpha_i$ counts a subset of
$\mathcal{V}\mathrm{al}_i$, with the same cardinality. Thus
$0\leq\alpha_i\leq\operatorname{cap}_i$ in both cases.
If $\sigma_1>1$, its occurrence at position $1$ is either barred
or an unbarred excedance, so $\beta_1=0$; if $\sigma_1=1$, it immediately follows that $\beta_1=0$. Also $1\notin\operatorname{Prim}(C)$ gives
$\gamma_1=0$, and $\mathcal{P}\mathrm{os}_1=\mathcal{V}\mathrm{al}_1=\varnothing$ gives $\alpha_1=0$.
The equalities $\sum_i\beta_i=u-1$ and $\sum_i\gamma_i=r$ follow
from \eqref{eq:theta-code-bits}. We thereby deduce that $\operatorname{Enc}(\pi)\in\operatorname{Code}_{n,k,r}$.

Finally, to show \eqref{eq:theta-available-set-transition}, compare the disjoint partitions
\eqref{eq:theta-value-partition} at $i-1$ and $i$, where $i\geq2$. For the rightmost set in \eqref{eq:theta-value-partition}, we have
\[\operatorname{Prim}(C)\cap[i]=\bigl(\operatorname{Prim}(C)\cap[i-1]\bigr)\cup\begin{cases}
    \varnothing, &\gamma_i=0\\
    \{i\}, &\gamma_i=1.
\end{cases}\]
For the middle set in \eqref{eq:theta-value-partition}, we have
\[\{\sigma_a:a\leq i,\ \beta_a=1\}=\{\sigma_a:a\leq i-1,\ \beta_a=1\}\cup\begin{cases}
    \{\sigma_i\}, &\beta_i=1\\
    \varnothing, &\beta_i=0.
\end{cases}\]
Combining these two identities and \eqref{eq:theta-value-partition} yields \eqref{eq:theta-available-set-transition}.
\end{proof}

We now describe an inverse algorithm $\operatorname{Dec}$ of $\operatorname{Enc}$ for any
$\mathbf c\in\operatorname{Code}_n$. It uses an ordered list of
\emph{position indices} and a set of unused \emph{values}, both
initially empty. A position index in the list specifies a fixed
position of $\sigma$ that has not yet been assigned a value.
The order of indices $i$ in this auxiliary list need not be their
numerical order but aligns with the alphabet order of their values $\pi_i^D$ according to Definition~\ref{def:DEX}.

For $i=1,\ldots,n$, perform the following two operations in the
stated order.
\begin{enumerate}[label=\textup{(\arabic*)}]
\item\label{op:1} If $\gamma_i=1$, remove the leftmost index $a$ from the
position list, set $\sigma_a=i$, and declare this value unbarred.
If $\gamma_i=0$, add the value $i$ to the set of unused values.
\item\label{op:2} If $\beta_i=1$, select the $(\alpha_i+1)$st largest element
of the set of unused values after excluding $1$. Set $\sigma_i$
equal to the selected value, declare it unbarred, and remove it
from the set. If $\beta_i=0$, insert the index $i$ into the position
list so that exactly $\alpha_i$ indices are to its left in that list.
\end{enumerate}
After the last step, assign the value $1$ to the position specified
by the rightmost index in the position list, and declare it unbarred. Assign the other unused
values in increasing order to the positions specified by the other
indices, read from left to right in the list, and bar those values.
The value $1$ remains unbarred. Denote the resulting permutation by
$\operatorname{Dec}(\mathbf c)$.

\begin{example}\label{ex:theta-dec-scan}
\normalfont
We exhibit how to reconstruct the bicolored permutation $\pi=\operatorname{Dec}(\mathbf c)$ from $\mathbf{c}=\operatorname{Enc}(\pi)$ in four diagrams below, where $\mathbf{c}=((\alpha_i,\beta_i,\gamma_i))_{i=1}^n$ is given by the table below. Place positions above values, with both rows labelled $1,\ldots,n$.
An arrow $a\to v$ represents $\sigma_a=v$, with its bar status
recorded separately. At column $i$, process the value first and the
position second. During the scan, an unbarred arrow with target
different from $1$ is assigned exactly when both endpoints are to
the left of the scan line. Among the scanned vertices:
\begin{itemize}[leftmargin=1.6em,itemsep=0pt,topsep=3pt]
\item a scanned position remains unassigned exactly when its arrow
is barred, has target $1$, or has its target to the right of the
scan line;
\item a scanned value other than $1$ remains unused exactly when
its arrow is barred or has its source to the right of the scan line.
\end{itemize}
The value $1$ is reserved until the final assignments. The auxiliary
position list has its own order; it is independent of the fixed order $1,2,\dots,n$ in the upper row in the four diagrams below.

Take $\mathbf c\in\operatorname{Code}_{7,2,2}$ as specified in the
second column below. The last two columns record the state after each
complete step; a dash means no new assignment.
\begin{center}
\small
\setlength{\tabcolsep}{5pt}
\renewcommand{\arraystretch}{1.22}
\begin{tabular}{c|c|c|c|c|c}
$i$ & $(\alpha_i,\beta_i,\gamma_i)$ & $\operatorname{cap}_i$
& \shortstack{New unbarred\\arrows}
& \shortstack{Ordered\\position list}
& \shortstack{Unused\\values}\\ \hline
$1$ & $(0,0,0)$ & $0$ & --- & $(1)$ & $\{1\}$\\
$2$ & $(1,0,0)$ & $1$ & --- & $(1,2)$ & $\{1,2\}$\\
$3$ & $(1,1,0)$ & $1$ & $3\to2$ & $(1,2)$ & $\{1,3\}$\\
$4$ & $(1,0,0)$ & $2$ & --- & $(1,4,2)$ & $\{1,3,4\}$\\
$5$ & $(1,0,1)$ & $2$ & $1\to5$ & $(4,5,2)$ & $\{1,3,4\}$\\
$6$ & $(1,1,1)$ & $1$ & $4\to6,\ 6\to3$ & $(5,2)$ & $\{1,4\}$\\
$7$ & $(1,0,0)$ & $2$ & --- & $(5,7,2)$ & $\{1,4,7\}$
\end{tabular}
\end{center}
For example, $\alpha_4=1$ inserts $4$ between $1$ and $2$ in the
position list. At step $6$, $\gamma_6=1$ removes the leftmost index
$4$, assigning $\sigma_4=6$; then $\beta_6=1$ and $\alpha_6=1$ select
the second largest value in $\{3,4\}$, assigning $\sigma_6=3$.
Finally, the list $(5,7,2)$ gives $\sigma_2=1$; its other two indices
receive $4<7$, both barred. Thus
\[
\operatorname{Dec}(\mathbf c)
=5\,1\,2\,6\,\overline{4}\,3\,\overline{7}.
\]
In the diagrams below, solid arrows are assigned; dashed arrows show the
final matching for reference only, not information known to
$\operatorname{Dec}$. Orange arrows are barred, so $5\to4$ remains
unassigned at step $5$ although both endpoints have been scanned;
violet arrows have target $1$.
The records shown are after complete steps (i.e. both operations \ref{op:1} and \ref{op:2} are finished, which means a complete step): the unused values are
$\{1\}\sqcup\mathcal{V}\mathrm{al}_i$, and the position list contains
$\mathcal{P}\mathrm{os}_i$ together with $i$ if $\beta_i=0$.

\begingroup
\definecolor{thetaDecBlue}{RGB}{27,96,151}
\definecolor{thetaDecOrange}{RGB}{181,102,17}
\definecolor{thetaDecGreen}{RGB}{0,119,102}
\definecolor{thetaDecViolet}{RGB}{116,69,153}
\definecolor{thetaDecScan}{RGB}{190,48,47}
\newcommand{\thetaDecPanel}[4]{%
\begin{tikzpicture}[x=.80cm,y=.78cm,
  every node/.style={font=\footnotesize},
  theta vertex/.style={circle,inner sep=1.2pt,minimum size=14pt},
  theta future/.style={theta vertex,text=black!40},
  theta assigned/.style={theta vertex,draw=thetaDecBlue,
                         fill=thetaDecBlue!8,text=thetaDecBlue},
  theta position/.style={theta vertex,draw=thetaDecOrange,
                         fill=thetaDecOrange!15,text=thetaDecOrange},
  theta value/.style={theta vertex,draw=thetaDecGreen,
                     fill=thetaDecGreen!12,text=thetaDecGreen},
  theta one/.style={theta vertex,draw=thetaDecViolet,
                   fill=thetaDecViolet!10,text=thetaDecViolet},
  theta done/.style={draw=thetaDecBlue,line width=.85pt,
                    -{Stealth[length=3.2pt]}},
  theta later/.style={draw=black!32,densely dashed,line width=.6pt,
                     -{Stealth[length=3.2pt]}},
  theta barred/.style={draw=thetaDecOrange,densely dashed,
                      line width=.8pt,-{Stealth[length=3.2pt]}},
  theta reserved/.style={draw=thetaDecViolet,densely dashed,
                        line width=.8pt,-{Stealth[length=3.2pt]}}]
\path[use as bounding box] (-1.35,-3.55) rectangle (7.65,.95);
\ifnum#1<8
  \fill[black!3] (.5,.35) rectangle (#1+.5,-2.35);
  \draw[thetaDecScan,line width=.85pt]
       (#1+.5,.45)--(#1+.5,-2.45);
  \draw[thetaDecScan,-{Stealth[length=3.5pt]},line width=.85pt]
       (#1+.17,.45)--(#1+.83,.45);
\else
  \fill[black!3] (.5,.35) rectangle (7.5,-2.35);
\fi
\node[font=\small\bfseries] at (3.6,.8) {#2};
\node[anchor=east] at (.35,0) {positions};
\node[anchor=east] at (.35,-2) {values};
\foreach \thetaSource/\thetaTarget/\thetaType in
 {1/5/0,2/1/2,3/2/0,4/6/0,5/4/1,6/3/0,7/7/1} {
  \def\thetaUpperStyle{theta future}
  \def\thetaLowerStyle{theta future}
  \ifnum#1=8
    \def\thetaUpperStyle{theta assigned}
    \def\thetaLowerStyle{theta assigned}
  \else
    \ifnum\thetaSource>#1\else
      \def\thetaUpperStyle{theta position}
      \ifnum\thetaType=0
        \ifnum\thetaTarget>#1\else
          \def\thetaUpperStyle{theta assigned}
        \fi
      \fi
    \fi
    \ifnum\thetaTarget>#1\else
      \def\thetaLowerStyle{theta value}
      \ifnum\thetaType=0
        \ifnum\thetaSource>#1\else
          \def\thetaLowerStyle{theta assigned}
        \fi
      \fi
      \ifnum\thetaTarget=1
        \def\thetaLowerStyle{theta one}
      \fi
    \fi
  \fi
  \node[\thetaUpperStyle] (thetaUpper\thetaSource)
       at (\thetaSource,0) {$\thetaSource$};
  \node[\thetaLowerStyle] (thetaLower\thetaTarget)
       at (\thetaTarget,-2) {$\thetaTarget$};
}
\foreach \thetaSource/\thetaTarget/\thetaType in
 {1/5/0,2/1/2,3/2/0,4/6/0,5/4/1,6/3/0,7/7/1} {
  \ifnum\thetaType=0
    \def\thetaArrowStyle{theta done}
    \ifnum\thetaSource>#1
      \def\thetaArrowStyle{theta later}
    \fi
    \ifnum\thetaTarget>#1
      \def\thetaArrowStyle{theta later}
    \fi
  \else
    \ifnum\thetaType=1
      \def\thetaArrowStyle{theta barred}
    \else
      \def\thetaArrowStyle{theta reserved}
    \fi
  \fi
  \ifnum#1=8
    \draw[\thetaArrowStyle,solid]
      (thetaUpper\thetaSource.south)--(thetaLower\thetaTarget.north);
  \else
    \draw[\thetaArrowStyle]
      (thetaUpper\thetaSource.south)--(thetaLower\thetaTarget.north);
  \fi
}
\node[anchor=west] at (-.85,-2.8)
  {position list: \textcolor{thetaDecOrange}{$#3$}};
\node[anchor=west] at (-.85,-3.3)
  {unused values: \textcolor{thetaDecGreen}{$#4$}};
\end{tikzpicture}%
}
\begin{center}
\begin{tabular}{@{}c@{\hspace{.45cm}}c@{}}
\thetaDecPanel{3}{After step $3$}{(1,2)}{\{1,3\}}
&
\thetaDecPanel{5}{After step $5$}{(4,5,2)}{\{1,3,4\}}\\[3mm]
\thetaDecPanel{7}{After step $7$}{(5,7,2)}{\{1,4,7\}}
&
\thetaDecPanel{8}{After the final assignments}{()}{\varnothing}
\end{tabular}

\smallskip
{\footnotesize
\textcolor{thetaDecOrange}{Orange vertices: unassigned positions};
\quad
\textcolor{thetaDecGreen}{green vertices: unused values $>1$};\\
\textcolor{thetaDecViolet}{violet vertex: the reserved value $1$};
\quad
\textcolor{thetaDecScan}{red line: scan boundary}.}
\end{center}
\endgroup
\end{example}

\begin{proposition}\label{prop:theta-prefix-bijection}
The algorithm $\operatorname{Dec}$ is well-defined on
$\operatorname{Code}_n$, and its restriction to
$\operatorname{Code}_{n,k,r}$ is the inverse of
$\operatorname{Enc}$. In particular, the following map is a bijection:
\[
\operatorname{Enc}:\operatorname{Red}_{n,k,r}
\longrightarrow\operatorname{Code}_{n,k,r}.
\]
\end{proposition}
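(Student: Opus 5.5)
The argument has three parts: $\operatorname{Dec}$ never gets stuck on any $\mathbf c\in\operatorname{Code}_n$; $\operatorname{Dec}\circ\operatorname{Enc}=\mathrm{id}$ on $\operatorname{Red}_{n,k,r}$; and $\operatorname{Dec}$ maps $\operatorname{Code}_{n,k,r}$ into $\operatorname{Red}_{n,k,r}$ with $\operatorname{Enc}\circ\operatorname{Dec}=\mathrm{id}$. Together these show that $\operatorname{Enc}$ is a bijection with inverse $\operatorname{Dec}$.

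\textbf{Well-definedness.} Let $L_i$ be the position list after step $i$, and let $U_i$ be the set of unused values other than $1$ after step $i$. One checks by induction on $i$ that
\[
|L_i|=\operatorname{cap}_i+1
\qquad\text{and}\qquad
|U_i|=\operatorname{cap}_i,
\]
using the recursion \eqref{eq:theta-code-capacity-recursion}. At $i=1$, the index $1$ is inserted and the value $1$ is added, giving $|L_1|=1$ and $|U_1|=0$.
\begin{itemize}
\item Removing the leftmost index when $\gamma_i=1$ is legal, since $|L_{i-1}|=\operatorname{cap}_{i-1}+1\ge1$.
\item Selecting the $(\alpha_i+1)$st largest element of $U$ when $\beta_i=1$ is legal, since after operation (1) that set has $\operatorname{cap}_i+1$ elements and $\alpha_i\le\operatorname{cap}_i$.
\item Inserting with exactly $\alpha_i$ indices to the left when $\beta_i=0$ is legal, since the list then has length $\operatorname{cap}_i$.
\end{itemize}
At the end, $|L_n|=\operatorname{cap}_n+1=|U_n|+1$. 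Hence the final assignment (value $1$ to the rightmost index, the rest increasing and barred) produces a genuine bicolored permutation with $\operatorname{cap}_n$ bars and $1$ unbarred. For codes in $\operatorname{Code}_{n,k,r}$ this gives exactly $k$ bars.

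\textbf{$\operatorname{Dec}\circ\operatorname{Enc}=\mathrm{id}$.} Fix $\pi\in\operatorname{Red}_{n,k,r}$ and run $\operatorname{Dec}$ on $\operatorname{Enc}(\pi)$. Prove by induction on $i$ the invariant suggested by Example~\ref{ex:theta-dec-scan} and Lemma~\ref{lem:theta-code-ranks}:
\begin{itemize}
\item after step $i$, the unused values other than $1$ are exactly $\mathcal{V}\mathrm{al}_i$;
\item the position list consists of $\mathcal{P}\mathrm{os}_i$, together with $i$ if $\beta_i=0$, ordered by the alphabet order of $\pi^D_a$;
\item every assignment made so far agrees with $\sigma$.
\end{itemize}

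The value side follows from the transition \eqref{eq:theta-available-set-transition}, provided the value chosen when $\beta_i=1$ is $\sigma_i$. This holds because $\alpha_i$ counts the elements of $\mathcal{V}\mathrm{al}_i$ above $\sigma_i$, and $\sigma_i$ itself lies in $\mathcal{V}\mathrm{al}_{i-1}\cup\{i\}$: it is an unbarred nonexcedance value with $\sigma_i\le i$ and $\eta_{\sigma_i}=i$.

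The position side is the crux. When $\gamma_i=1$, the leftmost index of the list must be $\eta_i$. The index $\eta_i$ lies in the list because $\eta_i<i$, so it was scanned, and $\sigma_{\eta_i}=i$ was not yet in $[2,i-1]$. It is leftmost because $\pi^D_{\eta_i}=i'$ is primed, while every other index $a$ in the list is one of the following:
\begin{itemize}
\item a barred position;
\item the position of $1$, which is ordinary;
\item a primed position whose value exceeds $i$.
\end{itemize}
Under the order $1'<\cdots<n'<\bar1<\cdots<\bar n<1<\cdots<n$, all of these exceed $i'$. When $\beta_i=0$, the definition of $\alpha_i$ as a count of elements $a\in\mathcal{P}\mathrm{os}_i$ with $\pi^D_a<\pi^D_i$ is exactly the insertion rank needed to keep the list ordered.

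At the end, the list is $\mathcal{P}\mathrm{os}_n$, which consists of the barred positions and $\eta_1$, ordered by $\pi^D$. Since barred letters precede ordinary ones, $\eta_1$ is rightmost, and the barred positions carry increasing barred values in list order. So the final assignments reproduce $\pi$.

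\textbf{$\operatorname{Enc}\circ\operatorname{Dec}=\mathrm{id}$ on $\operatorname{Code}_{n,k,r}$.} Let $\pi=\operatorname{Dec}(\mathbf c)$. First show that the recorded bits are the right types:
\begin{itemize}
\item A value assigned in operation (1) at step $i$ goes to an index $a<i$, so it is a $\mathsf P$-value and $\gamma$ is recovered.
\item A value $v$ assigned in operation (2) at step $i$ was added at a step $\le i$, so $v\le i=\eta_v$ and $v\ne1$; thus it is an $\mathsf N$-value and $\beta$ is recovered.
\item The value $1$ and the barred values are assigned only at the end, so the type counts are $r$, $u-1$ (plus the value $1$), and $k$.
\end{itemize}
Then the same invariant as above, now proved for $\operatorname{Dec}(\mathbf c)$, identifies the list and the unused set with $\mathcal{P}\mathrm{os}_i$ and $\mathcal{V}\mathrm{al}_i$. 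For the list order, one needs that the final assignment respects the list order, which holds by construction. Reading off ranks then recovers each $\alpha_i$.

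The main obstacle is the position-list ordering claim in both directions: the list order built by insertion ranks must coincide with the $\pi^D$-order of the eventually assigned letters. This requires checking that primed letters are consumed leftmost in increasing order of value, and that the barred values and $1$ sit at the right in the forced order.
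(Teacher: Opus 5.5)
Your proposal is correct and follows essentially the same route as the paper. Both use the same cardinality induction for well-definedness, the same type bookkeeping to recover $\beta$ and $\gamma$, and the same invariants identifying the position list with $\mathcal{P}\mathrm{os}_i$ (ordered by $\pi^D$) and the unused values with $\{1\}\sqcup\mathcal{V}\mathrm{al}_i$; the paper just treats $\operatorname{Enc}\circ\operatorname{Dec}$ before $\operatorname{Dec}\circ\operatorname{Enc}$. The ordering claim you flag as the main obstacle is settled in the paper exactly as you indicate: two indices never change relative order in the list, so the leftmost-removal rule forces the left one to be removed at an earlier step (hence a smaller primed letter), while indices never removed receive increasing barred letters, with the unbarred $1$ at the right end.
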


\begin{proof}
Note that each step includes two operations \ref{op:1} and \ref{op:2}. Throughout this proof, ``after step $i$'' means after both operations in step $i$ have been finished. 

We first prove that the operations are well-defined. At step $1$,
the algorithm adds the value $1$ to the unused values and inserts
the index $1$ into the empty position list. Hence both have
$\operatorname{cap}_1+1=1$ element. Inductively, suppose that both
have $\operatorname{cap}_{i-1}+1$ elements after step $i-1$. We next analyze step $i$.
If $\gamma_i=1$, the position list is nonempty and the required
deletion is possible. If $\gamma_i=0$, the new value $i$ has not
been used, since every value introduced at an earlier step is at
most $i-1$. After operation \ref{op:1}, the respective cardinalities are
\begin{align*}
\#\{\text{indices in the position list}\}
 &=\operatorname{cap}_{i-1}+1-\gamma_i
  =\operatorname{cap}_i+\beta_i,\\
\#\{\text{unused values}\}
 &=\operatorname{cap}_{i-1}+2-\gamma_i
  =\operatorname{cap}_i+\beta_i+1.
\end{align*}
If $\beta_i=0$, there are $\operatorname{cap}_i+1$ possible places
to insert the index $i$ into the position list. If $\beta_i=1$, the unused values include
$1$, which is never selected in operation \ref{op:2}; excluding it leaves
$\operatorname{cap}_i+1$ choices. Thus
$0\leq\alpha_i\leq\operatorname{cap}_i$ makes operation \ref{op:2}
well-defined in both cases. Afterwards both cardinalities equal
$\operatorname{cap}_i+1$. This completes the induction, and also
proves that the final assignments after the last step are possible, because we always have $\#\{\text{indices in the position list}\}=\#\{\text{unused values}\}=\operatorname{cap}_i+1$ after step $i$.

We then prove that $\operatorname{Dec}$ maps $\operatorname{Code}_{n,k,r}$ into $\operatorname{Red}_{n,k,r}$. Write $\pi=\operatorname{Dec}(\mathbf c)$, and let $C$ be its associated type word given by \eqref{eq:color-word-from-pi}. Every index $a$ removed from the position list in operation \ref{op:1} at step $i$ was inserted at a
step $a<i$, implying that $a\in\EXC{\pi}$. Every value $v$ selected in operation \ref{op:2} belongs to $[2,i]$,
so $v\in\operatorname{NonExc}(C)$. Besides these selected values, we also have $1\in\operatorname{NonExc}(C)$.
An index $i$ is inserted into the position list only at step $i$ and only if that position
is not assigned a value in operation \ref{op:2}. All subsequent assignments
at that position remove its index from the list, or occur in the
final assignments. Thus every position is assigned to a value exactly once.
Likewise, each value is assigned to a position exactly once, and is either
assigned immediately in operation \ref{op:1}, removed from the unused values
in operation \ref{op:2}, or assigned at the end. The result $\sigma$ is a permutation, which gives $\pi=\operatorname{Dec}(\mathbf c)$ after bars are added.
It follows that
\[
\exc{\pi}=\sum_{i=1}^n\gamma_i,
\qquad |\operatorname{NonExc}(C)|=1+\sum_{i=1}^n\beta_i,
\qquad \numcolor{\pi}=n-\sum_{i=1}^n\gamma_i-\sum_{i=1}^n\beta_i-1.
\]
In particular, $\operatorname{Dec}$ maps
$\operatorname{Code}_{n,k,r}$ into $\operatorname{Red}_{n,k,r}$.
For the rest of the proof, restrict $\mathbf c$ to
$\operatorname{Code}_{n,k,r}$.

We next prove $\operatorname{Enc}(\operatorname{Dec}(\mathbf c)) = \mathbf c$. Write $\mathbf c = ((\alpha_i,\beta_i,\gamma_i))_{i=1}^n$.
For the resulting permutation $\pi=\operatorname{Dec}(\mathbf c)$, the position list always has
the following property: if index $a$ is to the left of index $b$
in the list, then $\pi_a^D<\pi_b^D$. To prove this, fix two indices
that are simultaneously in the list. Their relative order cannot
change after both are inserted into the list. If the right index is eventually removed
in operation \ref{op:1}, then the left index must be removed at an earlier
step. The two assigned values therefore give primed letters in
strictly increasing order. If only the left index is removed in
operation \ref{op:1}, its primed letter is smaller than the barred letter
or ordinary $1$ eventually assigned to the right index. If neither
is removed in operation \ref{op:1}, the final assignments give increasing
barred letters, with the unbarred $1$ assigned at the rightmost
index in the list. Since every barred letter is smaller than the unbarred $1$,
the claimed inequality holds in this case as well.

Just after operation \ref{op:1} of step $i$, an index $a<i$ is still in
the position list precisely when $\pi_a$ is barred or $\sigma_a\notin[2,i]$.
Indeed, if $a<i$ and $\pi_a=\sigma_a\in[2,i]$ is unbarred, then there are two cases: either $\sigma_a\le a$ and hence $a$ was assigned by operation \ref{op:2} of step $a$, or $\sigma_a>a$ and hence $a$ was removed from the list by operation \ref{op:1} of step $\sigma_a$. Hence
\eqref{eq:theta-position-set} identifies the indices in the list
with $\mathcal{P}\mathrm{os}_i$.
Thus, when $\beta_i=0$, inserting $i$ with $\alpha_i$ indices on
its left makes $\alpha_i$ equal to the first count in
\eqref{eq:theta-code-coordinate} (recall the order preserving property shown in the previous paragraph). After completing step $i$, the
unused values are precisely
\begin{equation}\label{eq:theta-decoder-unused-values}
\{1\}\sqcup\mathcal{V}\mathrm{al}_i.
\end{equation}
Indeed, among $[2,i]$ the values already assigned are exactly
$\operatorname{Prim}(C)\cap[i]$ and the values assigned
at positions $a\leq i$ with $\beta_a=1$, so applying
\eqref{eq:theta-value-partition} gives \eqref{eq:theta-decoder-unused-values}. If $\beta_i=1$, the value selected
in operation \ref{op:2} has exactly $\alpha_i$ larger values in the unused
set. Removing the selected value does not change that count, and
the reserved value $1$ is smaller than the selected value. Therefore
\eqref{eq:theta-decoder-unused-values} identifies this count with
the second count in \eqref{eq:theta-code-coordinate}. The type
verification in the third paragraph of this proof also recovers $\beta_i$ and $\gamma_i$.
Hence $\operatorname{Enc}(\operatorname{Dec}(\mathbf c))=\mathbf c$.

Conversely, we prove $\operatorname{Dec}(\operatorname{Enc}(\pi))=\pi$. Start with a given $\pi\in\operatorname{Red}_{n,k,r}$
and apply the decoding algorithm $\operatorname{Dec}$ to its code $\operatorname{Enc}(\pi)=((\alpha_i,\beta_i,\gamma_i))_{i=1}^n$. Just after step $1$, the
position list is $(1)$ and the set of unused values is $\{1\}$, as required.
Inductively, given $i>1$, assume that all assignments at all steps $j<i$
agree with $\pi$, that the position list is ordered by increasing
$\pi_a^D$, and that its set of unused values equals
$\{1\}\sqcup\mathcal{V}\mathrm{al}_{j}$. We then analyze step $i$.

We first process operation \ref{op:1} of step $i$. If $\gamma_i=1$, then
$\eta_i<i$, so the index $\eta_i$ is in the position list.
Its letter $\pi_{\eta_i}^D=i^\prime$ is smaller than every other letter $\pi_a^D$ associated to
an index $a\neq\eta_i$ in that list: for any other index $a\neq\eta_i$ in the list, either $\pi_a^D$ has no prime and hence $\pi_a^D>i^\prime$ according to the alphabet order, or $\pi_a^D=\sigma_a^\prime$ is primed and thus $\sigma_a>i$ (otherwise $\sigma_a<i$, so $a$ was removed from the position list by operation \ref{op:1} at step $\sigma_a<i$, which contradicts the fact that $a$ is still in the list) implies $\pi_a^D>i^\prime$. Consequently, the induction hypothesis means that $\eta_i$ is leftmost in the position list. Thus operation \ref{op:1} removes exactly $\eta_i$
and makes the correct assignment. If $\gamma_i=0$, it adds the
value $i$, as in \eqref{eq:theta-available-set-transition}.
Using the induction hypothesis, the same description of the
unassigned positions shows that after operation \ref{op:1} the position
list consists of $\mathcal{P}\mathrm{os}_i$ in the stated order.

We second process operation \ref{op:2} of step $i$.
If $\beta_i=0$, the first
count in \eqref{eq:theta-code-coordinate} inserts $i$ in exactly
the place that preserves the position list order mentioned in the induction hypothesis. If $\beta_i=1$, the unused
values other than $1$ are
$\mathcal{V}\mathrm{al}_i\sqcup\{\sigma_i\}$, by
\eqref{eq:theta-available-set-transition}. The second count in
\eqref{eq:theta-code-coordinate} therefore selects exactly
$\sigma_i$. This proves the induction. 

Finally (i.e. after step $n$), the induction result implies that the unused values
are $\{1\}\sqcup\operatorname{Bar}(C)$, and the remaining indices
are ordered by their increasing letters in $\pi^D$. The prescribed
final assignments consequently recover the remaining entries of
$\pi$. Thus $\operatorname{Dec}(\operatorname{Enc}(\pi))=\pi$.
\end{proof}

\subsection{The crossing weight and the descent set of the code}
\label{subsec:theta-code-statistics}

The bijectivity of $\operatorname{Enc}$ has now been established.
We turn to figuring out what the crossing weight and the descent set become after the map $\pi\mapsto\operatorname{Enc}(\pi)=((\alpha_i,\beta_i,\gamma_i))_{i=1}^n$, beginning with
an alternative expression for the coordinates $\alpha_i$.
For this purpose, we first record the set identity
\begin{equation}\label{eq:theta-position-set-difference}
\mathcal{P}\mathrm{os}_i
=\{a<i:\beta_a=0\}
 \setminus\{\eta_v:v\in\operatorname{Prim}(C)\cap[i]\}.
\end{equation}
Indeed, the positions with $\beta_a=0$ have primed letters, barred
letters, or the ordinary letter $1$. For every
$v\in\operatorname{Prim}(C)\cap[i]$, admissibility gives
$\eta_v<v\leq i$, so $\eta_v<i$ and $\beta_{\eta_v}=0$. Thus
the set removed on the right is a subset of $\{a<i:\beta_a=0\}$,
and its removal leaves exactly the positions with barred values,
the ordinary value $1$, or primed values greater than $i$.
If $a<i$ and $\sigma_a>i$ is unbarred, then $a<\sigma_a$, so
$\sigma_a$ is primed. The remaining positions are therefore exactly
those in \eqref{eq:theta-position-set}, proving the identity.

We introduce the two auxiliary counts
\begin{equation}\label{eq:theta-prefix-counts}
L_i:=\#\{a<i:\pi_a^D<\pi_i^D\},
\qquad \nu_i:=|\operatorname{Prim}(C)\cap[i]|.
\end{equation}
The definition \eqref{eq:theta-code-coordinate} implies the following new counting of $\alpha_i$:
\begin{lemma}\label{lem:theta-code-algebraic-coordinate}
For $i\in[n]$, we have
\begin{equation}\label{eq:theta-code-algebraic-coordinate}
\alpha_i=
\begin{cases}
L_i-\nu_i,&\beta_i=0,\\[1mm]
L_i-(\sigma_i-1)-|\operatorname{Prim}(C)\cap(\sigma_i,i]|,
 &\beta_i=1.
\end{cases}
\end{equation}
\end{lemma}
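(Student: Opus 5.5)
We treat the two cases of \eqref{eq:theta-code-coordinate} separately.

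\emph{Case $\beta_i=0$.} Here $\alpha_i=\#\{a\in\mathcal{P}\mathrm{os}_i:\pi_a^D<\pi_i^D\}$. By the set identity \eqref{eq:theta-position-set-difference}, $\mathcal{P}\mathrm{os}_i$ is $\{a<i:\beta_a=0\}$ with the $\nu_i$ positions $\eta_v$, $v\in\operatorname{Prim}(C)\cap[i]$, removed. So we compare $L_i$ with this count in two steps.

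First, positions $a<i$ with $\beta_a=1$ never satisfy $\pi_a^D<\pi_i^D$. Such an $a$ carries an ordinary value $\sigma_a\ge 2$ with $\sigma_a\le a<i$. Since $\beta_i=0$, the letter $\pi_i^D$ is primed, barred, or the ordinary $1$. Each of these is smaller than every ordinary value $\ge2$ in the alphabet of Definition~\ref{def:DEX}.

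Second, every removed position $\eta_v$ does satisfy $\pi_{\eta_v}^D=v'<\pi_i^D$. If $\pi_i^D$ is barred or the ordinary $1$, this is immediate. If $\pi_i^D=\sigma_i'$ is primed, then $\sigma_i>i\ge v$, so again $v'<\pi_i^D$. Also $v\ne\sigma_i$, because $\eta_v<i$.

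Together these give $L_i=\alpha_i+\nu_i$.

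\emph{Case $\beta_i=1$.} Now $\sigma_i\in[2,i]$ is ordinary, and every primed, barred, or $1$-letter lies below it. Hence $L_i$ equals the number of $a<i$ with $\pi_a^D$ non-ordinary, plus the number of $a<i$ with $\pi_a^D$ ordinary and $\sigma_a<\sigma_i$.

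We instead count values, using the disjoint partition \eqref{eq:theta-value-partition}. Every $v<\sigma_i$ has $\eta_v\ne i$. The values $v<\sigma_i$ with $\eta_v>i$ are therefore unbarred non-$1$ values sitting at positions after $i$. These are exactly the elements of $\mathcal{V}\mathrm{al}_i$ below $\sigma_i$ that are unbarred, plus possibly primed values.

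The cleanest route is to write $\sigma_i-1$ as the number of values in $[1,\sigma_i)$ and then match position counts against value counts. I would write
\[
L_i=\#\{a<i:\sigma_a<\sigma_i\text{ or }\pi_a^D\text{ non-ordinary}\},
\]
and decompose $[i-1]$ by \eqref{eq:theta-pos-partition}. The $\beta_a=1$ positions contribute those ordinary values $<\sigma_i$ placed before $i$. The positions $\eta_v$ with $v\in\operatorname{Prim}(C)\cap[i]$ all contribute. The positions in $\mathcal{P}\mathrm{os}_i$ all contribute.

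On the other side, $\alpha_i$ counts elements of $\mathcal{V}\mathrm{al}_i$ larger than $\sigma_i$. We have $|\mathcal{V}\mathrm{al}_i|=\operatorname{cap}_i$ and $|\mathcal{P}\mathrm{os}_i|=\operatorname{cap}_i+1$ by Lemma~\ref{lem:theta-code-ranks}. Applying \eqref{eq:theta-value-partition} to the values in $[2,\sigma_i)$ expresses $\sigma_i-2$ as
\[
\#\bigl(\mathcal{V}\mathrm{al}_i\cap[2,\sigma_i)\bigr)+\#\{\text{$\beta$-values }<\sigma_i\}+\#\bigl(\operatorname{Prim}(C)\cap[2,\sigma_i)\bigr).
\]

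Substituting and simplifying should leave exactly the correction $|\operatorname{Prim}(C)\cap(\sigma_i,i]|$: these are the primed values counted in $\nu_i$ but lying above $\sigma_i$. The extra $1$ comes from $|\mathcal{P}\mathrm{os}_i|-|\mathcal{V}\mathrm{al}_i|=1$, which accounts for the position of the value $1$.

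The main obstacle is this second case. It needs careful bookkeeping of the off-by-one terms: the reserved value $1$, the fact that $\sigma_i$ itself is excluded, and the split of the primed values in $[i]$ at $\sigma_i$. I would check the final identity on Example~\ref{ex:theta-dec-scan} at $i=3$ and $i=6$ to fix the constants.
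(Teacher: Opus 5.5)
Your case $\beta_i=0$ is exactly the paper's argument. For $\beta_i=1$ your bottom-up count is a different but valid route. You stopped before doing the arithmetic, but it closes without any constant-fixing.

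Put $m:=\#\{a<i:\beta_a=1,\ \sigma_a<\sigma_i\}$.

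\emph{Counting $L_i$.} Every letter at a position of $\mathcal{P}\mathrm{os}_i$ is barred, primed, or the ordinary $1$, and every primed letter also lies below $\pi_i^D=\sigma_i$. So your decomposition of $[i-1]$ by \eqref{eq:theta-pos-partition} gives $L_i=(\operatorname{cap}_i+1)+\nu_i+m$.

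\emph{Counting $\alpha_i$.} We have $\sigma_i\notin\mathcal{V}\mathrm{al}_i$, since $\sigma_i$ is unbarred with $\eta_{\sigma_i}=i$. Hence $\alpha_i=\operatorname{cap}_i-\#(\mathcal{V}\mathrm{al}_i\cap[2,\sigma_i))$. Intersecting \eqref{eq:theta-value-partition} with $[2,\sigma_i)$ gives
\[
\sigma_i-2=\#(\mathcal{V}\mathrm{al}_i\cap[2,\sigma_i))+m+|\operatorname{Prim}(C)\cap[2,\sigma_i)|.
\]
The middle term is the same $m$, because the index $a=i$ contributes $\sigma_i\notin[2,\sigma_i)$.

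\emph{Subtracting.} This gives
\[
L_i-\alpha_i=(\sigma_i-1)+\nu_i-|\operatorname{Prim}(C)\cap[2,\sigma_i)|
=(\sigma_i-1)+|\operatorname{Prim}(C)\cap(\sigma_i,i]|,
\]
using $\sigma_i\le i$ and $1,\sigma_i\notin\operatorname{Prim}(C)$.

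Delete the exploratory paragraph claiming that the values $v<\sigma_i$ with $\eta_v>i$ are unbarred, different from $1$, ``plus possibly primed values.'' Such values may be barred or equal to $1$. They can never be primed, because a primed $v<\sigma_i\le i$ has $\eta_v<v<i$. That paragraph is not used afterwards, so nothing breaks.

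For comparison, the paper counts from the top rather than the bottom. Only ordinary letters larger than $\sigma_i$ exceed $\pi_i^D$, so $L_i=i-1-\#\{a<i:\beta_a=1,\ \sigma_a>\sigma_i\}$. Intersecting \eqref{eq:theta-value-partition} with $(\sigma_i,i]$ then expresses $\alpha_i=\#(\mathcal{V}\mathrm{al}_i\cap(\sigma_i,i])$ through that same count. This uses only the value partition and needs neither \eqref{eq:theta-pos-partition} nor the cardinality formulas \eqref{eq:theta-pos-val-cardinalities}. It has no off-by-one bookkeeping, which is why the case you flagged as the main obstacle takes three lines there.

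Your version uses both partitions and both cardinality formulas, so it is longer. In exchange, it handles both cases through the same position decomposition and shows where the constant comes from. The extra $1$ is $\beta_i=|\mathcal{P}\mathrm{os}_i|-|\mathcal{V}\mathrm{al}_i|$. It arises from the index $a=i$, which lies in the value-side set $\{\sigma_a:a\le i,\ \beta_a=1\}$ but not in $[i-1]$; it does not come from the value $1$.
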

\begin{proof}
    Suppose first that $\beta_i=0$.
For every $v\in\operatorname{Prim}(C)\cap[i]$, admissibility gives
$\eta_v<v\leq i$. We also have $v'<\pi_i^D$: this is immediate
if $\pi_i^D$ is barred or is the ordinary $1$; if
$\pi_i^D=\sigma_i'$, then $i<\sigma_i$, and hence
$v\leq i<\sigma_i$ gives $v'<\sigma_i'=\pi_i^D$.
Furthermore, every letter of $\pi^D$ at a position with $\beta_a=1$ is greater
than $\pi_i^D$ because $\pi_i^D$ is primed, is barred, or is the ordinary $1$. Thus
\[
\{\eta_v:v\in\operatorname{Prim}(C)\cap[i]\}
\subseteq\{a<i:\pi_a^D<\pi_i^D\}
\subseteq\{a<i:\beta_a=0\}.
\]
Using this set chain, \eqref{eq:theta-position-set-difference}, and the injectivity
of $\eta$, we obtain
\begin{align*}
\alpha_i
&=\#\{a\in\mathcal{P}\mathrm{os}_i:\pi_a^D<\pi_i^D\}\\
&=\#\{a<i:\pi_a^D<\pi_i^D\}
  -\#\{\eta_v:v\in\operatorname{Prim}(C)\cap[i]\}
 =L_i-\nu_i.
\end{align*}

Suppose next that $\beta_i=1$, and put $y=\sigma_i$.
Then $2\leq y\leq i$ and $\pi_i^D=y$. For $a<i$,
\[
\pi_a^D>\pi_i^D
\quad\Longleftrightarrow\quad
\beta_a=1\text{ and }\sigma_a>y,
\]
because the primed letters, barred letters, and ordinary $1$ are
all smaller than $y$. Since the letters are distinct,
\[
L_i=i-1-\#\{a<i:\pi_a^D>\pi_i^D\}=i-1-\#\{a<i:\beta_a=1,\ \sigma_a>y\}.
\]
Intersecting the disjoint partition \eqref{eq:theta-value-partition}
with $(y,i]$ gives
\[i-y=\alpha_i+\#\{\sigma_a:a\leq i,\ \beta_a=1,\sigma_a>y\}
 +\lvert\operatorname{Prim}(C)\cap(y,i]\rvert\]
and hence
\begin{align*}
\alpha_i
&=(i-y)-|\operatorname{Prim}(C)\cap(y,i]|
         -\#\{a\le i:\beta_a=1,\ \sigma_a>y\}\\
&=(i-y)-|\operatorname{Prim}(C)\cap(y,i]|
         -\#\{a<i:\beta_a=1,\ \sigma_a>y\}=L_i-(y-1)-|\operatorname{Prim}(C)\cap(y,i]|.
\end{align*}
The position $a=i$ does not contribute to the removed set, since
$\sigma_i=y$. This proves \eqref{eq:theta-code-algebraic-coordinate}.
\end{proof}

\begin{lemma}\label{lem:theta-code-crossing}
For every $\pi\in\operatorname{Red}_{n,k,r}$,
\[
\sum_{i=1}^n\alpha_i=\cro{\pi}.
\]
\end{lemma}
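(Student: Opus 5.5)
We will use the algebraic form of the coordinates from Lemma~\ref{lem:theta-code-algebraic-coordinate} and sum it over $i$. Since $\pi^D$ has distinct letters, $\sum_i L_i$ is the number of non-inversions of $\pi^D$, namely $\binom n2-\inv(\pi^D)$. The proof of Proposition~\ref{prop:barred-unbarred-relation} shows that the standardization of $\pi^D$ is $\operatorname{PosWord}(\eta,C)^{-1}$. Combined with $\inv(w)=\inv(w^{-1})$, this gives
\[
\sum_i L_i=\binom n2-\inv\bigl(\operatorname{PosWord}(\eta,C)\bigr).
\]
By \eqref{eq:signed-cr}, the target is
\[
\cro{\pi}=\binom{k}{2}-r+\inv(\operatorname{ValWord}(C))-\inv(\operatorname{PosWord}(\eta,C)).
\]
The $\operatorname{PosWord}$ terms therefore cancel. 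It remains to prove the purely value-theoretic identity
\[
\sum_{i:\beta_i=0}\nu_i+\sum_{i:\beta_i=1}\Bigl((\sigma_i-1)+|\operatorname{Prim}(C)\cap(\sigma_i,i]|\Bigr)
=\binom n2-\binom k2+r-\inv(\operatorname{ValWord}(C)).
\]

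Next, we reindex the $\beta_i=1$ sum by values. As $i$ ranges over positions with $\beta_i=1$, the value $j=\sigma_i$ ranges over $\operatorname{NonExc}(C)\setminus\{1\}$, with $i=\eta_j\ge j$. The second sum becomes
\[
\sum_{j\in \operatorname{NonExc}(C)\setminus\{1\}}\Bigl((j-1)+\#\{p\in\operatorname{Prim}(C): j<p\le \eta_j\}\Bigr).
\]
The first sum is
\[
\sum_{i:\beta_i=0}\nu_i=\sum_{p\in\operatorname{Prim}(C)}\#\{i\ge p:\beta_i=0\}.
\]
Writing $\#\{i\ge p:\beta_i=0\}=(n-p+1)-\#\{j\in\operatorname{NonExc}(C)\setminus\{1\}:\eta_j\ge p\}$ splits the first sum into two parts. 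The cross term $\#\{(p,j): \eta_j\ge p\}$ combines with $\#\{(p,j):j<p\le\eta_j\}$ from the second sum. Since $\eta_j\ge j$ for type $\mathsf N$, the pairs with $p\le \eta_j$ split as $p\le j$ or $j<p\le\eta_j$, so the combined quantity is $-\#\{(p,j):p<j\}$; here $p\neq j$, and $j=1$ contributes nothing since $p>1$. Every appearance of $\eta$ thus disappears. This is the step I expect to be delicate, because of the boundary bookkeeping at $p=\eta_j$ and the excluded value $1$; I would double-check it against Example~\ref{ex:theta-dec-scan}.

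What is left is an identity about the three value sets alone:
\[
\sum_{p\in\operatorname{Prim}}(n-p+1)-\#\{(p,j)\in\operatorname{Prim}\times\operatorname{NonExc}:p<j\}+\sum_{j\in\operatorname{NonExc}}(j-1)
=\binom n2-\binom k2+r-\inv(\operatorname{ValWord}(C)).
\]
The term $j=1$ contributes $0$, so we may sum over all of $\operatorname{NonExc}$. To verify this, we expand $\inv(\operatorname{ValWord}(C))$ as the sum of $\#\{(p,b):p>b\}$, $\#\{(p,j):p>j\}$ and $\#\{(b,j):b>j\}$. We also use $\sum_p(n-p)=\#\{(p,x):x>p\}$ and $\sum_j(j-1)=\#\{(j,x):x<j\}$, each split by the type of $x$. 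Same-type pairs give $\binom r2+\binom u2$, and $\binom n2-\binom k2$ accounts for all remaining pairs. The cross-type pairs should then match term by term: the $\mathsf{P}\mathsf{B}$ and $\mathsf{B}\mathsf{N}$ pairs total $rk+ku$, the $\mathsf{P}\mathsf{N}$ pairs total $ru$, and the extra $+r$ is exactly the $+1$ in $n-p+1$. This final pair count is routine once the cancellation above has been established.
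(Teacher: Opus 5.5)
Your proposal is correct and follows essentially the same route as the paper: sum Lemma~\ref{lem:theta-code-algebraic-coordinate}, use $\sum_i L_i=\binom n2-\inv(\operatorname{PosWord}(\eta,C))$, eliminate $\eta$, and match against \eqref{eq:signed-cr} by counting pairs in $\operatorname{ValWord}(C)$. The only difference is bookkeeping: the paper removes $\eta$ in one line by writing $|\operatorname{Prim}(C)\cap(\sigma_i,i]|=\nu_i-\nu_{\sigma_i}$, so the $\nu$-terms merge into $\sum_{i=1}^n\nu_i-\sum_{x\in\operatorname{NonExc}(C)}\nu_x$, which is exactly what your swap of summation and cancellation of the $(p,j)$ pairs computes (your boundary cases at $p=\eta_j$ and $j=1$ are handled correctly).
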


\begin{proof}
The standardization of $\pi^D$ is
$\operatorname{PosWord}(\eta,C)^{-1}$, as shown in the proof of
Proposition~\ref{prop:barred-unbarred-relation}. Standardization
preserves inversions, and a permutation and its inverse have equal
inversion numbers. Every pair of positions contributes either an
inversion or a contribution to exactly one $L_i$. Therefore
\begin{equation}\label{eq:theta-sum-prefix-ranks}
\sum_{i=1}^n L_i=\binom n2-\inv\bigl(\operatorname{PosWord}(\eta,C)\bigr).
\end{equation}
Equation~\eqref{eq:theta-code-algebraic-coordinate} can be written
uniformly as
\[
\alpha_i=L_i-\nu_i
 -\one_{\sigma_i\in\operatorname{NonExc}(C)}\cdot(\sigma_i-1)
 +\one_{\beta_i=1}\cdot\nu_{\sigma_i}.
\]
For $\beta_i=1$, this uses
$|\operatorname{Prim}(C)\cap(\sigma_i,i]|=\nu_i-\nu_{\sigma_i}$.
At the position $\eta_1$ of the ordinary $1$, it uses $\sigma_{\eta_1}-1=0,\beta_{\eta_1}=0$.
For a barred value $x$, the number $\nu_x$ counts the primed values
strictly smaller than $x$, since $x\notin\operatorname{Prim}(C)$.
We have
\begin{align}
\sum_{i=1}^n\nu_i-\sum_{x\in\operatorname{NonExc}(C)}\nu_x
&=\sum_{x\in\operatorname{Prim}(C)\cup\operatorname{Bar}(C)}
              |\operatorname{Prim}(C)\cap[x]|\notag\\
&=\binom{r+1}{2}+\sum_{x\in\operatorname{Bar}(C)}\nu_x.
\label{eq:theta-prim-count-sum}
\end{align}
Indeed, the terms indexed by the primed values sum to
$1+\cdots+r$, and the terms indexed by the barred values count
exactly the increasing pairs with a primed smaller value and a barred
larger value. Summing the uniform formula
for $\alpha_i$ above consequently gives
\begin{align}
\sum_{i=1}^n\alpha_i
=&\sum_{i=1}^n L_i - \sum_{i=1}^n \nu_i -\sum_{x\in\operatorname{NonExc}(C)}(x-1) +\sum_{x\in\operatorname{NonExc}(C)\setminus\{1\}} \nu_x \notag \\
=&\sum_{i=1}^n L_i - \sum_{i=1}^n \nu_i -\sum_{x\in\operatorname{NonExc}(C)}(x-1) +\sum_{x\in\operatorname{NonExc}(C)} \nu_x \notag \\
=&\binom n2-\inv\bigl(\operatorname{PosWord}(\eta,C)\bigr)
 -\sum_{x\in\operatorname{NonExc}(C)}(x-1)
-\binom{r+1}{2}
 -\sum_{x\in\operatorname{Bar}(C)}\nu_x
\label{eq:theta-alpha-sum-expanded}
\end{align}
where the last equality follows from \eqref{eq:theta-sum-prefix-ranks} and \eqref{eq:theta-prim-count-sum}.

The increasing pairs (i.e. non-inversions) of entries in $\operatorname{ValWord}(C)$ (defined in Proposition~\ref{prop:signed-formulas}) split into four disjoint
classes: pairs within the primed block, pairs within the barred
block, increasing pairs from the primed block to the barred block,
and increasing pairs whose larger value belongs to the nonexcedance
block. For each $x\in\operatorname{NonExc}(C)$, all $x-1$ smaller
values occur to its left in this word, by the definition of its
three increasing blocks. Hence
\begin{align}
\binom n2-\inv\bigl(\operatorname{ValWord}(C)\bigr)
={}&\binom r2+\binom k2
 +\sum_{x\in\operatorname{Bar}(C)}\nu_x\notag\\
&\hspace{8mm}+\sum_{x\in\operatorname{NonExc}(C)}(x-1).
\label{eq:theta-valword-noninversions}
\end{align}
Substituting this into \eqref{eq:theta-alpha-sum-expanded} and using
$\binom r2-\binom{r+1}{2}=-r$ yields
\[
\sum_{i=1}^n\alpha_i
=\binom k2-r
 +\inv\bigl(\operatorname{ValWord}(C)\bigr)
 -\inv\bigl(\operatorname{PosWord}(\eta,C)\bigr).
\]
This equals $\cro{\pi}$ by \eqref{eq:signed-cr} and
Definition~\ref{def:cr}.
\end{proof}

For $\mathbf c\in\operatorname{Code}_n$, define
$\operatorname{Asc}(\mathbf c)\subseteq[n-1]$ by
\begin{equation}\label{eq:theta-code-ascent-set}
\begin{split}
i\in\operatorname{Asc}(\mathbf c)\quad\Longleftrightarrow\quad
&\beta_i<\beta_{i+1},\quad\text{or}\\
&\beta_i=\beta_{i+1}=1\text{ and }
 \alpha_i\geq\alpha_{i+1}+\gamma_{i+1},\quad\text{or}\\
&\beta_i=\beta_{i+1}=0\text{ and }
 \alpha_i<\alpha_{i+1}+\gamma_{i+1}.
\end{split}
\end{equation}
This definition aligns with the definition above \cite[Conjecture 4.1]{Lentfer2025}.

\begin{lemma}\label{lem:theta-code-dex}
For every $\pi\in\operatorname{Red}_{n,k,r}$,
\[
\DEX{\pi}
=[n-1]\setminus\operatorname{Asc}(\operatorname{Enc}(\pi)).
\]
\end{lemma}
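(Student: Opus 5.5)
We prove, for each fixed $i\in[n-1]$, that $\pi_i^D>\pi_{i+1}^D$ holds exactly when $i\notin\operatorname{Asc}(\operatorname{Enc}(\pi))$. The proof is a case analysis on the pair $(\beta_i,\beta_{i+1})$, matching the three clauses of \eqref{eq:theta-code-ascent-set}.

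\emph{Mixed cases.} A position $a$ has $\beta_a=1$ precisely when $\pi_a^D$ is an ordinary letter $\geq2$. A position has $\beta_a=0$ precisely when its letter is primed, barred, or the ordinary $1$, and all such letters are smaller than every ordinary letter $\geq 2$. So if $\beta_i<\beta_{i+1}$ we get $\pi_i^D<\pi_{i+1}^D$, which is not a descent and matches $i\in\operatorname{Asc}$. If $\beta_i>\beta_{i+1}$ we get a descent, and none of the three clauses applies, so $i\notin\operatorname{Asc}$.

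\emph{Case $\beta_i=\beta_{i+1}=0$.} I would use Lemma~\ref{lem:theta-code-algebraic-coordinate}, which gives $\alpha_i=L_i-\nu_i$ and $\alpha_{i+1}=L_{i+1}-\nu_{i+1}$. Since $\nu_{i+1}=\nu_i+\gamma_{i+1}$, we have $\alpha_{i+1}+\gamma_{i+1}-\alpha_i=L_{i+1}-L_i$. Now compare $L_i$ and $L_{i+1}$. Every position $a<i$ lies below $\pi_i^D$ or not, and likewise relative to $\pi_{i+1}^D$. If $\pi_i^D<\pi_{i+1}^D$, then $L_{i+1}\ge L_i+1$, counting position $i$ itself. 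If $\pi_i^D>\pi_{i+1}^D$, then $L_{i+1}\le L_i$, because every $a<i$ counted in $L_{i+1}$ is also counted in $L_i$. Hence $\pi_i^D<\pi_{i+1}^D$ if and only if $\alpha_i<\alpha_{i+1}+\gamma_{i+1}$, which is the third clause.

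\emph{Case $\beta_i=\beta_{i+1}=1$.} Put $y=\sigma_i$ and $y'=\sigma_{i+1}$, both ordinary values in $[2,i+1]$. I would use the original definition $\alpha_i=\#\{v\in\mathcal V\mathrm{al}_i:v>y\}$ together with the transition rule \eqref{eq:theta-available-set-transition}. That rule says $\mathcal V\mathrm{al}_{i+1}\sqcup\{y'\}$ equals $\mathcal V\mathrm{al}_i$, with $i+1$ added if $\gamma_{i+1}=0$. Moreover $y'\in\mathcal V\mathrm{al}_i\cup\{i+1\}$, and $y\notin\mathcal V\mathrm{al}_i$.
\begin{itemize}
\item If $y'>y$ (an ascent, not in DEX), count the elements above $y'$ in $\mathcal V\mathrm{al}_{i+1}$. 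The elements of $\mathcal V\mathrm{al}_i$ above $y$ consist of $y'$ itself (when $y'\le i$), those above $y'$, and those strictly between $y$ and $y'$. The value $i+1$ enters only when $\gamma_{i+1}=0$. This yields $\alpha_{i+1}+\gamma_{i+1}\le\alpha_i$ after accounting for $i+1$. The delicate subcase is $y'=i+1$, where $\gamma_{i+1}=0$ is forced.
\item If $y'<y$ (a descent), the elements above $y$ are still present above $y'$, and $i+1$ is added unless $\gamma_{i+1}=1$. This gives $\alpha_{i+1}+\gamma_{i+1}\ge\alpha_i+1$.
\end{itemize}
So the second clause holds exactly when there is no descent.

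This last case is where I expect the main obstacle. Tracking whether $i+1$ enters $\mathcal V\mathrm{al}_{i+1}$ and whether $y'$ equals $i+1$ needs care to get exactly the threshold $\alpha_i\ge\alpha_{i+1}+\gamma_{i+1}$. I would verify it by writing $\alpha_{i+1}+\gamma_{i+1}=\#\{v\in \mathcal V\mathrm{al}_i\cup\{i+1\}:v>y'\}-\one_{y'<\ldots}$, treating each subcase explicitly, and checking the result against Example~\ref{ex:theta-dec-scan}.
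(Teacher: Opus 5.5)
Your proposal is correct and follows the paper's proof essentially step for step. The case split on $(\beta_i,\beta_{i+1})$ is the same, Lemma~\ref{lem:theta-code-algebraic-coordinate} drives the case $\beta_i=\beta_{i+1}=0$, and the transition rule \eqref{eq:theta-available-set-transition} drives the case $\beta_i=\beta_{i+1}=1$, with $\sigma_{i+1}=i+1$ treated separately. Your closing worry is unnecessary: $\alpha_{i+1}+\gamma_{i+1}=\#\{v\in\mathcal V\mathrm{al}_i\cup\{i+1\}:v>\sigma_{i+1}\}$ holds uniformly with no correction term. This is exactly the identity the paper uses, in the form $1+\#\{v\in\mathcal V\mathrm{al}_i:v>\sigma_{i+1}\}$ when $\sigma_{i+1}\le i$.
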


\begin{proof}
Fix $i\in[n-1]$. If $(\beta_i,\beta_{i+1})=(0,1)$, then
$\pi_i^D<\pi_{i+1}^D$, since every primed letter, barred letter,
and the ordinary $1$ is smaller than every ordinary letter at least
$2$. Thus $i\notin\DEX{\pi}$ and
$i\in\operatorname{Asc}(\operatorname{Enc}(\pi))$.
If $(\beta_i,\beta_{i+1})=(1,0)$, the same comparison gives
$\pi_i^D>\pi_{i+1}^D$, and the two memberships are reversed.

Suppose $\beta_i=\beta_{i+1}=0$. Then
$\nu_{i+1}=\nu_i+\gamma_{i+1}$ and
\eqref{eq:theta-code-algebraic-coordinate} give
\[
\alpha_i=L_i-\nu_i,
\qquad \alpha_{i+1}+\gamma_{i+1}=L_{i+1}-\nu_i.
\]
If $\pi_i^D<\pi_{i+1}^D$, then
\[
\{a<i:\pi_a^D<\pi_i^D\}\sqcup\{i\}
\subseteq\{a<i+1:\pi_a^D<\pi_{i+1}^D\},
\]
so $L_i+1\leq L_{i+1}$. If $\pi_i^D>\pi_{i+1}^D$, then
\[
\{a<i+1:\pi_a^D<\pi_{i+1}^D\}
\subseteq\{a<i:\pi_a^D<\pi_i^D\},
\]
so $L_{i+1}\leq L_i$. The letters are distinct, and hence
\[
i\in\DEX{\pi}
\quad\Longleftrightarrow\quad L_{i+1}\le L_i \quad\Longleftrightarrow\quad
\alpha_i\geq\alpha_{i+1}+\gamma_{i+1}.
\]
This is the required complement relation in this case.

Suppose $\beta_i=\beta_{i+1}=1$, and put
$y=\sigma_i$, $x=\sigma_{i+1}$. We have $y\leq i$ and $x\leq i+1$.
If $x=i+1$, then $y<x$, $\gamma_{i+1}=0$, and $\alpha_{i+1}=0$:
all values in $\mathcal{V}\mathrm{al}_{i+1}$ are at most $i+1=\sigma_{i+1}$. Thus both
$y>x$ and $\alpha_i<\alpha_{i+1}+\gamma_{i+1}$ are false.

Otherwise $x\leq i$. Since $x$ is an ordinary value other than $1$
and $\eta_x=i+1$, we have $x\in\mathcal{V}\mathrm{al}_i$. Since
$\eta_y=i$, we have $y\notin\mathcal{V}\mathrm{al}_i$. The definition of
$\alpha_i$ and the recurrence
\eqref{eq:theta-available-set-transition} give
\begin{align*}
\alpha_i&=\#\{v\in\mathcal{V}\mathrm{al}_i:v>y\},\\
\alpha_{i+1}+\gamma_{i+1}
 &=1+\#\{v\in\mathcal{V}\mathrm{al}_i:v>x\}
\end{align*}
where the second equality holds because deleting $x$ does not change the count of
values greater than $x$, and the new value $i+1$ is greater than $x$
and is added by the recurrence \eqref{eq:theta-available-set-transition} exactly when $\gamma_{i+1}=0$.
If $y>x$, then
\[
\{v\in\mathcal{V}\mathrm{al}_i:v>y\}
\subseteq\{v\in\mathcal{V}\mathrm{al}_i:v>x\},
\]
which implies $\alpha_i<\alpha_{i+1}+\gamma_{i+1}$.
If $y<x$, then
\[
\{x\}\sqcup\{v\in\mathcal{V}\mathrm{al}_i:v>x\}
\subseteq\{v\in\mathcal{V}\mathrm{al}_i:v>y\},
\]
which implies $\alpha_i\geq\alpha_{i+1}+\gamma_{i+1}$.
As $x\ne y$, we have proved
\[
i\in\DEX{\pi}
\quad\Longleftrightarrow\quad y>x
\quad\Longleftrightarrow\quad
\alpha_i<\alpha_{i+1}+\gamma_{i+1}.
\]
This is the complement of the second case in
\eqref{eq:theta-code-ascent-set}, and completes the proof.
\end{proof}

\subsection{Insertion of the code into a segmented permutation}
\label{subsec:theta-code-insertion}

We use the insertion underlying Lentfer's bijection
\cite[Theorem~6.2, Corollary~6.3, and Proposition~6.5]{Lentfer2025}.
We describe the algorithm and recall all its properties needed here.
Given $\mathbf c\in\operatorname{Code}_n$, start with the single
segment $[1]$. For $i=2,\ldots,n$, insert the new largest value $i$
by the following rule, according to $(\beta_i,\gamma_i)$:
\begin{enumerate}[label={},leftmargin=3.8em]
\item[$(0,0)$] Insert the singleton segment $[i]$ with exactly
$\alpha_i$ segments to its right.
\item[$(1,0)$] Append $i$ to the segment having exactly $\alpha_i$
segments to its right.
\item[$(0,1)$] Prepend $i$ to the segment having exactly $\alpha_i$
segments to its right.
\item[$(1,1)$] Replace two adjacent segments $U\mid V$ by
$U\,i\,V$, choosing the pair so that the resulting segment has
exactly $\alpha_i$ segments to its right.
\end{enumerate}
Denote the resulting segmented permutation by
$\operatorname{Ins}(\mathbf c)$.

\begin{lemma}\label{lem:theta-insertion-bijection}
The map $\operatorname{Ins}$ is a bijection from
$\operatorname{Code}_n$ to all segmented permutations of $[n]$.
It satisfies
\begin{align}
\operatorname{asc}(\operatorname{Ins}(\mathbf c))&=\sum_i\beta_i,
&\operatorname{des}(\operatorname{Ins}(\mathbf c))&=\sum_i\gamma_i,
\label{eq:theta-insertion-shape}\\
\operatorname{sminv}(\operatorname{Ins}(\mathbf c))&=\sum_i\alpha_i,
&\operatorname{Split}(\operatorname{Ins}(\mathbf c))
 &=\operatorname{Asc}(\mathbf c).
\label{eq:theta-insertion-statistics}
\end{align}
\end{lemma}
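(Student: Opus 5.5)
The plan is to prove Lemma~\ref{lem:theta-insertion-bijection} by induction on $n$, tracking how a single insertion step changes the segmented permutation. The statement is the content of \cite[Theorem~6.2, Corollary~6.3, Proposition~6.5]{Lentfer2025}, so the argument can also be read as a self-contained check of Lentfer's results in the present conventions.

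\emph{Step 1: bijectivity via a counting invariant.} The invariant is that after inserting $i$ the current segmented permutation of $[i]$ has exactly $\operatorname{cap}_i+1$ segments. The four rules change the number of segments by $+1,0,0,-1$ respectively, i.e.\ by $1-\beta_i-\gamma_i$, matching \eqref{eq:theta-code-capacity-recursion}. Next check that in each case the admissible choices are counted exactly by $0\le\alpha_i\le\operatorname{cap}_i$:
\begin{itemize}
\item rule $(0,0)$ has $\operatorname{cap}_{i-1}+2=\operatorname{cap}_i+1$ slots;
\item rules $(1,0)$ and $(0,1)$ have $\operatorname{cap}_{i-1}+1=\operatorname{cap}_i+1$ target segments;
\item rule $(1,1)$ has $\operatorname{cap}_{i-1}=\operatorname{cap}_i+1$ adjacent pairs.
\end{itemize}
In each case the choices are distinguished by the number of segments to the right of the result. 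For the inverse, delete the maximal entry $n$. Its local configuration determines $(\beta_n,\gamma_n)$:
\begin{itemize}
\item $n$ forms a singleton segment: $(0,0)$;
\item $n$ ends a segment of length $\ge2$: $(1,0)$;
\item $n$ begins a segment of length $\ge2$: $(0,1)$;
\item $n$ is interior to its segment: $(1,1)$, and deleting it splits the segment into $U\mid V$.
\end{itemize}
The number of segments to its right gives $\alpha_n$. Induction then shows that $\operatorname{Ins}$ and this deletion map are mutually inverse.

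\emph{Step 2: the shape statistics and $\operatorname{sminv}$.} Since $i$ is the current maximum, the four rules add respectively nothing, one ascent, one descent, or one ascent and one descent. All existing adjacencies are preserved or merely separated by a new segment boundary, which gives \eqref{eq:theta-insertion-shape}. For $\operatorname{sminv}$, I will show that the new sminversions are exactly the pairs $(i,w_b)$ with $w_b$ a segment start to the right of $i$, excluding the start immediately after $i$ when $i$ was prepended. This works because $w_{b-1}>i$ is impossible. In every case this count is $\alpha_i$. I also need that old pairs keep their status. The only affected pairs have $b$ immediately after $i$, and such a $b$ was previously a start. It is now a non-start with $w_{b-1}=i>w_a$, so it is still counted.

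\emph{Step 3: $\operatorname{Split}=\operatorname{Asc}$, the main obstacle.} First show that thickness of an entry is fixed at insertion. The entry $i$ is thick iff $\beta_i=0$: it starts a segment exactly in rules $(0,0)$ and $(0,1)$, and it can never be the smaller entry of a descent at the moment it is inserted. Later insertions of larger $j$ do not change this. When a former segment start becomes a non-start, it becomes the smaller entry of the descent $j\,V_1$, so it stays thick. This immediately matches condition (i) of $\operatorname{Split}$ with the clause $\beta_i<\beta_{i+1}$, and shows $i\notin\operatorname{Split}$ in the opposite mixed case.

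For $\beta_i=\beta_{i+1}$, I need to decide whether $i$ lies left of $i+1$. Because later insertions never reorder existing entries, this is determined at the step inserting $i+1$. The plan is to compare the segment index of $i$, which still has $\alpha_i$ segments to its right plus any changes caused by step $i+1$, with $\alpha_{i+1}$, the shift by $\gamma_{i+1}$ being the expected correction. I expect this case analysis over $(\beta_{i+1},\gamma_{i+1})$ to be the delicate part: merging or prepending shifts the segment count by exactly $\gamma_{i+1}$, and the thin case has $\ge$ while the thick case has $<$. This should reproduce the two remaining clauses of \eqref{eq:theta-code-ascent-set}.
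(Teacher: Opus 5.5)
Your argument is correct, but it takes a different route from the paper. The paper does not verify the lemma directly. It identifies $\mathbf c$ with Lentfer's monomial $\prod_i x_i^{\alpha_i}\theta_i^{\beta_i}\xi_i^{\gamma_i}$, then quotes \cite[Theorem~6.2]{Lentfer2025} for bijectivity and the three weights, and \cite[Proposition~6.5]{Lentfer2025} for $\operatorname{Split}=\operatorname{Asc}$.

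Your induction over insertion steps instead checks the statement directly in the paper's own conventions: the four insertion rules as written, the definitions of $\operatorname{sminv}$ and $\operatorname{Split}$ from Iraci--Nadeau--Vanden Wyngaerd, and the set \eqref{eq:theta-code-ascent-set}. So it does not rely on these matching Lentfer's conventions exactly. It also exposes why the lemma holds:
\begin{itemize}
\item an entry $i$ is thick exactly when $\beta_i=0$;
\item step $i$ creates exactly $\alpha_i$ new sminversions and changes the status of no old pair.
\end{itemize}
The citation buys brevity; your route buys self-containment and a transparent mechanism.

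One small correction in Step 2. Simply count the segment starts of the \emph{new} segmented permutation that lie to the right of $i$. In all four rules these are the starts of the $\alpha_i$ segments to the right of $i$'s segment. The old start that is demoted immediately after $i$ occurs in rule $(1,1)$ as well as in rule $(0,1)$, so your exclusion clause as worded is incomplete.

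The case analysis you left open in Step 3 does go through. Label the segments after step $i$ as $S_0,\dots,S_m$ from left to right, with $m=\operatorname{cap}_i$. Then $i\in S_{m-\alpha_i}$, and $i$ is its first entry when $\beta_i=0$.
\begin{itemize}
\item Rule $(0,0)$ places the singleton $[i+1]$ just before $S_{m-\alpha_{i+1}+1}$. Hence $i+1$ is left of $i$ if and only if $\alpha_{i+1}\ge\alpha_i+1$.
\item Rule $(0,1)$ prepends $i+1$ to $S_{m-\alpha_{i+1}}$. Because $i$ begins its segment, $i+1$ is left of $i$ if and only if $\alpha_{i+1}\ge\alpha_i$.
\item Rule $(1,0)$ appends $i+1$ to $S_{m-\alpha_{i+1}}$. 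Hence $i$ is left of $i+1$ if and only if $\alpha_i\ge\alpha_{i+1}$.
\item Rule $(1,1)$ places $i+1$ between $S_t$ and $S_{t+1}$ with $t=m-1-\alpha_{i+1}$. Hence $i$ is left of $i+1$ if and only if $\alpha_i\ge\alpha_{i+1}+1$.
\end{itemize}
In the thick case this says that $i+1$ is left of $i$ if and only if $\alpha_i<\alpha_{i+1}+\gamma_{i+1}$. In the thin case, $i$ is left of $i+1$ if and only if $\alpha_i\ge\alpha_{i+1}+\gamma_{i+1}$. These are exactly the last two clauses of \eqref{eq:theta-code-ascent-set}.
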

\begin{proof}
    Given a code $\mathbf c=((\alpha_i,\beta_i,\gamma_i))_{i=1}^n\in\operatorname{Code}_n$, we identify it with the monomial \[\Bigg(\prod_{i=1}^nx_i^{\alpha_i}\Bigg)\Bigg(\prod_{i=1}^n \theta_i^{\beta_i}\Bigg)\Bigg(\prod_{i=1}^n\xi_i^{\gamma_i}\Bigg)\] introduced by \cite[Definition 3.1]{Lentfer2025}. Therefore, the bijectivity and the first three weight-preserving identities immediately follow from \cite[Theorem 6.2]{Lentfer2025}, and the last set identity immediately arises from \cite[Proposition 6.5]{Lentfer2025}.
\end{proof}

\begin{proposition}\label{prop:theta-pointwise-bijection}
The explicitly defined map
\[
\mathcal J:=\operatorname{Ins}\circ\operatorname{Enc}:
\operatorname{Red}_{n,k,r}
\longrightarrow\operatorname{SW}(1^n;u-1,r)
\]
is a bijection, and for every source object $\pi$ it satisfies
\begin{align}
\operatorname{sminv}(\mathcal J(\pi))&=\cro{\pi},
\label{eq:theta-pointwise-crossing}\\
[n-1]\setminus\operatorname{Split}(\mathcal J(\pi))&=\DEX{\pi}.
\label{eq:theta-pointwise-dex}
\end{align}
Consequently,
\begin{align}
\al{\pi}
 &=\delta_{n,k,r}-\operatorname{sminv}(\mathcal J(\pi)),
\label{eq:theta-pointwise-al}\\
\ell(f_\pi)
 &=ru+\delta_{n,k,r}-\operatorname{sminv}(\mathcal J(\pi)).
\label{eq:theta-pointwise-affine-length}
\end{align}
\end{proposition}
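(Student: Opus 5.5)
The plan is to compose the two bijections already established and track the statistics through each factor. By Proposition~\ref{prop:theta-prefix-bijection}, $\operatorname{Enc}:\operatorname{Red}_{n,k,r}\to\operatorname{Code}_{n,k,r}$ is a bijection. By Lemma~\ref{lem:theta-insertion-bijection}, $\operatorname{Ins}$ is a bijection from $\operatorname{Code}_n$ onto all segmented permutations of $[n]$. Moreover, $\operatorname{Ins}$ sends a code with $\sum_i\beta_i=u-1$ and $\sum_i\gamma_i=r$ to a segmented permutation with $u-1$ ascents and $r$ descents, by \eqref{eq:theta-insertion-shape}.

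Conversely, a segmented permutation $w\in\operatorname{SW}(1^n;u-1,r)$ has preimage code with these two sums. Since $\sum_i\beta_i$ and $\sum_i\gamma_i$ are exactly the conditions that cut out $\operatorname{Code}_{n,k,r}$ inside $\operatorname{Code}_n$, the restriction of $\operatorname{Ins}$ is a bijection $\operatorname{Code}_{n,k,r}\to\operatorname{SW}(1^n;u-1,r)$. Here $k=n-r-u$ is determined, and $\operatorname{cap}_n=k$ is automatic. This is the one point needing care: we must check that the ascent/descent constraint is equivalent to membership in $\operatorname{Code}_{n,k,r}$, and not merely implied by it. Note also that $(u-1)+r<n$ holds, so the set $\operatorname{SW}(1^n;u-1,r)$ is in the range of definitions. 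Composing the two restrictions gives the bijectivity of $\mathcal J$.

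For \eqref{eq:theta-pointwise-crossing}, chain \eqref{eq:theta-insertion-statistics} with Lemma~\ref{lem:theta-code-crossing}:
\[
\operatorname{sminv}(\mathcal J(\pi))=\sum_i\alpha_i=\cro{\pi},
\]
where the $\alpha_i$ are the coordinates of $\operatorname{Enc}(\pi)$. For \eqref{eq:theta-pointwise-dex}, combine $\operatorname{Split}(\operatorname{Ins}(\mathbf c))=\operatorname{Asc}(\mathbf c)$ with Lemma~\ref{lem:theta-code-dex}:
\[
[n-1]\setminus\operatorname{Split}(\mathcal J(\pi))=[n-1]\setminus\operatorname{Asc}(\operatorname{Enc}(\pi))=\DEX{\pi}.
\]

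Finally, \eqref{eq:theta-pointwise-al} follows from Corollary~\ref{cor:cr-al-complement-for-pi}, $\al{\pi}=\delta_{n,k,r}-\cro{\pi}$, by substituting \eqref{eq:theta-pointwise-crossing}. For \eqref{eq:theta-pointwise-affine-length}, use Definition~\ref{def:al}, $\ell(f_\pi)=\al{\pi}+ru$ (equivalently \eqref{eq:affine-length-cr} together with $f_\pi=f_{\eta,C}$ from Proposition~\ref{prop:affine-length-and-al}), and substitute again.
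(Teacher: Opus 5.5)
Your proposal is correct and follows the paper's own proof. You compose $\operatorname{Enc}$ (Proposition~\ref{prop:theta-prefix-bijection}) with $\operatorname{Ins}$ (Lemma~\ref{lem:theta-insertion-bijection}), restrict via the ascent/descent counts, chain the statistic identities, and finish with Corollary~\ref{cor:cr-al-complement-for-pi} and Definition~\ref{def:al}. Your explicit check that the two sum conditions cut out exactly $\operatorname{Code}_{n,k,r}$ inside $\operatorname{Code}_n$ makes precise a step the paper leaves implicit.
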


\begin{proof}
Proposition~\ref{prop:theta-prefix-bijection}, the bijectivity in Lemma~\ref{lem:theta-insertion-bijection}, and
\eqref{eq:theta-insertion-shape} show that the composite is a
bijection with the stated target. Lemma~\ref{lem:theta-code-crossing}
and the first equality in \eqref{eq:theta-insertion-statistics} prove
\eqref{eq:theta-pointwise-crossing}. Lemma~\ref{lem:theta-code-dex}
and the second equality in \eqref{eq:theta-insertion-statistics} prove
\eqref{eq:theta-pointwise-dex}. Finally,
Corollary~\ref{cor:cr-al-complement-for-pi} gives
\eqref{eq:theta-pointwise-al}, and
Definition~\ref{def:al} gives \eqref{eq:theta-pointwise-affine-length}.
\end{proof}

\subsection{Completion of the two proofs}
\label{subsec:theta-completion}

\begin{proof}[Proof of Theorem~\ref{conj:unbarred-one-theta}]
Proposition~\ref{prop:theta-pointwise-bijection} gives the identity
of quasisymmetric functions
\begin{align}
\sum_{\pi\in\operatorname{Red}_{n,k,r}}
 z^{\cro{\pi}}\FF_{\DEX{\pi},n}(\xxx)
&=\sum_{w\in\operatorname{SW}(1^n;u-1,r)}
 z^{\operatorname{sminv}(w)}
 \FF_{[n-1]\setminus\operatorname{Split}(w),n}(\xxx)\notag\\
&=\Omega_n\left(
 \sum_{w\in\operatorname{SW}(1^n;u-1,r)}
 z^{\operatorname{sminv}(w)}
 \FF_{\operatorname{Split}(w),n}(\xxx)\right)\notag\\
&=\Omega_n\left(
 \left.\Theta_{e_{u-1}}\Theta_{e_{r}}
             \widetilde H_{(k+1)}\right|_{t=0,\,q=z}
             \right)
\label{eq:theta-crossing-fundamental-identity}
\end{align}
where the last equality follows from \eqref{eq:theta-known-smirnov-expansion}.
The argument of $\Omega_n$ is thereby symmetric. Its Macdonald input has degree
$n-(u-1)-r=k+1\geq1$. Hence $\Omega_n$ may be replaced by $\omega$
on that argument, and \eqref{eq:theta-operator-commutativity} gives
\begin{equation}\label{eq:theta-reduced-crossing-expansion}
\sum_{\pi\in\operatorname{Red}_{n,k,r}}
 z^{\cro{\pi}}\FF_{\DEX{\pi},n}(\xxx)=\omega\!\left(
 \left.\Theta_{e_{u-1}}\Theta_{e_{r}}
             \widetilde H_{(k+1)}\right|_{t=0,\,q=z}
             \right)
=\omega\!\left(
 \left.\Theta_{e_r}\Theta_{e_{u-1}}
             \widetilde H_{(k+1)}\right|_{t=0,\,q=z}
             \right).
\end{equation}

By Corollary~\ref{cor:cr-al-complement-for-pi}, every source object
satisfies $\al{\pi}=\delta_{n,k,r}-\cro{\pi}$.
Substituting $z=q^{-1}$ in \eqref{eq:theta-reduced-crossing-expansion}
and multiplying by $q^{\delta_{n,k,r}}$ therefore gives
\begin{align*}
\Phi^{\mathrm{unb}}_{n,k,r}(\xxx;q)
&=q^{\delta_{n,k,r}}
  \sum_{\pi\in\operatorname{Red}_{n,k,r}}
   (q^{-1})^{\cro{\pi}}\FF_{\DEX{\pi},n}(\xxx)\\
&=q^{\delta_{n,k,r}}\omega\!\left(
 \left.\bigl(\Theta_{e_r}\Theta_{e_{u-1}}
                  \widetilde H_{(k+1)}\bigr)
       \right|_{t=0,\,q=q^{-1}}\right).\qedhere
\end{align*}
\end{proof}

\begin{proof}[Proof of Theorem~\ref{conj:bicolored-theta}]
Suppose first that $k\geq1$. The pair of parameters $(k-1,r)$ has
$u+1\geq1$ nonexcedance values, so the reduced theorem just proved
always applies to it, including when $u=0$. The change in the
normalizing exponent is
\begin{align*}
\delta_{n,k-1,r}
&=\binom n2-\binom{r+1}{2}-\binom{u+1}{2}\\
&=\delta_{n,k,r}-u.
\end{align*}
Using Proposition~\ref{prop:barred-unbarred-relation} and Theorem~\ref{conj:unbarred-one-theta} with bar parameter $k-1$ gives
\begin{align*}
\Phi^{\mathrm{bar}}_{n,k,r}(\xxx;q)
&=q^u\Phi^{\mathrm{unb}}_{n,k-1,r}(\xxx;q)\\
&=q^{u+\delta_{n,k-1,r}}\omega\!\left(
 \left.\bigl(\Theta_{e_r}\Theta_{e_u}\widetilde H_{(k)}\bigr)
       \right|_{t=0,\,q=q^{-1}}\right)\\
&=q^{\delta_{n,k,r}}\omega\!\left(
 \left.\bigl(\Theta_{e_r}\Theta_{e_u}\widetilde H_{(k)}\bigr)
       \right|_{t=0,\,q=q^{-1}}\right).
\end{align*}
This is the second summand of
\eqref{eq:bicolored-theta-conjecture}. If $u\geq1$, applying the
reduced theorem also to $(k,r)$ gives the first summand, and
\eqref{eq:one-bar-status-decomposition} proves the result.
If $u=0$, the unbarred-one part is empty (otherwise the value $1$ would
give an unbarred nonexcedance and thus $u>0$). Thus only the second summand is present,
again as stated in the current theorem.

If $k=0$ and $u\geq1$, the barred-one part is empty and Theorem~\ref{conj:unbarred-one-theta} is exactly Theorem~\ref{conj:bicolored-theta}; the second
summand is omitted. Finally, if $k=u=0$, then $r=n$ and the indexing
set is empty: the value $1$ can be neither barred nor an unbarred
excedance. Both summands on the right are omitted, so both sides are
zero. These cases exhaust the parameters in the theorem.
\end{proof}

\section{crossings and alignments in the literature}
\label{sec:comparison-with-signed-statistics}


A different notion of alignments and crossings for a bicolored permutation was introduced by
Corteel, Josuat-Verg\`es, and Kim in 
\cite{CorteelJosuatVergesKim2013}. 
Cho and Park \cite{ChoPark2015} later introduced another version of alignments and studied its relationship with the crossing number defined by Corteel et~al.
Both papers defined the crossing and the alignment of a bicolored permutation under the \emph{sign order} \eqref{eq:SignOrder}.

In this section, we study the relationship between the alignment and crossing statistics introduced in Section \ref{sec:crossings-and-affine-length}, which are defined using the \emph{color order} \eqref{eq:CoinvConjecture}, and the crossing statistic of Corteel et al. and the alignment statistic of Cho and Park.
Throughout this section, when working with the sign order, we identify $\overline{i}$ with $-i$ for each $i\in[n]$.

For $\tau\in \fB_n$ with $[n]\cup[\overline{n}]$ under the sign order \eqref{eq:SignOrder}, Corteel et al. defined \cite[p. 194]{CorteelJosuatVergesKim2013} a crossing of $\tau$ to be an
ordered pair
$(i,j)\in [n]^2$ satisfying one of the following three conditions:
\begin{align}
i<j\leq\tau_i<\tau_j,
&\label{eq:comparison-crossing-upper}\\
-i<j\leq-\tau_i<\tau_j,
&\label{eq:comparison-crossing-mixed}\\
i>j>\tau_i>\tau_j.
&\label{eq:comparison-crossing-lower}
\end{align}
These three conditions are mutually exclusive. 
Denote $\crSgn(\tau)$ the number of such order pairs.

Denote $\alSgn(\tau)$ the number of Cho and Park's alignments. 
Here we will not define their alignment; the reader who is interested in knowing can see \cite[Definition 2.2]{ChoPark2015}. 
We make use of the following relationship between $\crSgn$ and $\alSgn$ found by Cho and Park \cite[Theorem 3.3]{ChoPark2015}: for $\tau\in\fB_n$,
\begin{equation} \label{eq:comparison-CP-complement}
    \alSgn(\tau)+\crSgn(\tau)=(n-\wex(\tau))\left(\wex(\tau)-1+\nega{\tau}\right)-\binom{\nega{\tau}}{2}.
\end{equation}
where $\wex(\tau)\coloneqq\#\{i\in [n]~:~\tau_i\ge i\}$ is the weak excedance number of $\tau$.

\begin{theorem}\label{thm:comparison-signed-statistics}
Fix integers $n\geq1$, $0\leq k\leq n$, and
$0\leq r\leq n-k$, and put $u:=n-k-r$.
There is an explicit bijection
\[
\mathcal T:
\{\pi\in\coloredperm{n}{k}:\exc{\pi}=r\}
\longrightarrow
\{\tau\in \coloredperm{n}{k}: 
  \wex(\tau)=u\}
\]
such that, for every $\pi$ in its domain,
\begin{align}
\cro{\pi}
&=\crSgn(\mathcal T(\pi)),
\label{eq:comparison-crossing-preserved}\\
\al{\pi}
&=\alSgn(\mathcal T(\pi)).
\label{eq:comparison-alignment-preserved}
\end{align}
Moreover, the value $1$ is unbarred in $\pi$ if and only if
$\mathcal T(\pi)_1>0$.
\end{theorem}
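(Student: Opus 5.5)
We define $\mathcal T$ through the admissible data of Proposition~\ref{prop:bicolored-admissible-bijection}. Let $\pi\mapsto(\eta,C)$, and write $\operatorname{Bar}(C)=\{b_1<\cdots<b_k\}$. We take $\tau=\mathcal T(\pi)$ to be the signed permutation with window
\[
\tau_x:=\eta_x\quad\bigl(x\in\operatorname{Prim}(C)\cup\operatorname{NonExc}(C)\bigr),
\qquad
\tau_{b_a}:=-\eta_{b_{k+1-a}}\quad(1\le a\le k).
\]
So $\tau$ is the inverse data $\eta$, read as a function of values, with the barred block negated and reversed. Since $|\tau|$ is $\eta$ followed by a permutation of the barred positions, $|\tau|\in\fS_n$, and $\tau\in\coloredperm{n}{k}$.

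The inverse map is also explicit. The signs of $\tau$ recover $\operatorname{Bar}(C)$. Among the positive entries, $\tau_x\ge x$ singles out $\operatorname{NonExc}(C)$ and $\tau_x<x$ singles out $\operatorname{Prim}(C)$. Undoing the reversal on the barred block then recovers $\eta$. Admissibility gives $\tau_x\ge x$ exactly for $x\in\operatorname{NonExc}(C)$, since primed values have $\eta_p<p$ and barred entries are negative. Hence $\wex(\tau)=u$. Also $\tau_1>0$ if and only if $C_1\neq\mathsf B$, that is, if and only if $1$ is unbarred in $\pi$.

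The main step is \eqref{eq:comparison-crossing-preserved}, proved by matching each of the five sums in Definition~\ref{def:cr} with a subset of the Corteel--Josuat-Verg\`es--Kim crossings, sorted by the types of $i$ and $j$.
\begin{itemize}
\item Condition \eqref{eq:comparison-crossing-upper} needs $\tau_i,\tau_j\ge$ the indices, so $i,j\in\operatorname{NonExc}(C)$. It then reads $j_1<j_2\le\eta_{j_1}<\eta_{j_2}$, the $\mathsf N\mathsf N$ term.
\item Condition \eqref{eq:comparison-crossing-mixed} needs $\tau_i<0$ and $\tau_j\ge j$, so $i$ is barred and $j\in\operatorname{NonExc}(C)$. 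It reads $j\le|\tau_i|<\eta_j$. As $i$ runs over $\operatorname{Bar}(C)$, $|\tau_i|$ runs over $\{\eta_b:b\in\operatorname{Bar}(C)\}$, so the reversal does not affect the count, and we get the $\mathsf B\mathsf N$ term.
\item Condition \eqref{eq:comparison-crossing-lower}, $i>j>\tau_i>\tau_j$, excludes nonexcedance $i$ and $j$.
\begin{itemize}
\item If $i,j$ are both primed, it reads $\eta_{p_1}<\eta_{p_2}<p_1<p_2$.
\item If $i$ is primed and $j$ is barred, it reads $p>b>\eta_p$, which is the $\mathsf B\mathsf P$ term.
\item If $i$ is barred and $j$ is primed, it would need $\tau_j<\tau_i<0$, which is impossible.
\item If $i,j$ are both barred, write $i=b_{a'}$ and $j=b_a$ with $a<a'$. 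The condition $\tau_i>\tau_j$ becomes $\eta_{b_{k+1-a'}}<\eta_{b_{k+1-a}}$. Setting $c'=k+1-a'<c=k+1-a$, these are exactly the pairs $b_{c'}<b_c$ with $\eta_{b_{c'}}<\eta_{b_c}$.
\end{itemize}
\end{itemize}
The reversal exists precisely to make this last case match. The three conditions are mutually exclusive, so this gives $\crSgn(\tau)=\operatorname{cr}_{\mathsf B}(\eta,C)=\cro{\pi}$. I expect this case analysis to be the main obstacle. In particular, one must check that no other pair of types satisfies any of the three conditions; for example, \eqref{eq:comparison-crossing-upper} fails for $i$ primed because $\tau_i<i$.

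For \eqref{eq:comparison-alignment-preserved}, we do not use Cho--Park's definition directly. Instead we combine \eqref{eq:comparison-CP-complement} with Corollary~\ref{cor:cr-al-complement-for-pi}. With $\wex(\tau)=u$ and $\nega{\tau}=k$, the right side of \eqref{eq:comparison-CP-complement} is
\[
(r+k)(u-1+k)-\tbinom k2=\tbinom k2+rk+ru+ku-r,
\]
which equals $\delta_{n,k,r}$ by the computation at the end of the proof of Theorem~\ref{thm:cr-al-complement}. Therefore
\[
\alSgn(\tau)=\delta_{n,k,r}-\crSgn(\tau)=\delta_{n,k,r}-\cro{\pi}=\al{\pi}.
\]
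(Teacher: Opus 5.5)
Your proposal is correct and follows essentially the same route as the paper. It uses the same map $\mathcal T$ ($\tau=\eta$ off the barred values, with the barred block negated and reversed), the same type-by-type matching of the three crossing conditions with the five terms of Definition~\ref{def:cr} (including the index reversal for the $\mathsf B\mathsf B$ case), and the same derivation of the alignment equality from \eqref{eq:comparison-CP-complement} combined with Corollary~\ref{cor:cr-al-complement-for-pi}.
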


\begin{proof}
Let $(\eta,C)\in\operatorname{Adm}_{n,k,r}$ be the admissible
data associated with $\pi$ under
Proposition~\ref{prop:bicolored-admissible-bijection}.
For this proof, abbreviate
\[
P:=\operatorname{Prim}(C),\qquad
B:=\operatorname{Bar}(C),\qquad
N:=\operatorname{NonExc}(C).
\]
Thus $P$, $B$, and $N$ partition $[n]$, their cardinalities are
$r$, $k$, and $u$, respectively, and
\[
\eta_i<i\quad(\forall i\in P),
\qquad
\eta_i\geq i\quad(\forall i\in N).
\]
In particular, $B$ is the set of barred values of $\pi$,
not the set of its barred positions.

Write $B=\{b_1<\cdots<b_k\}$ and define
$\tau=\mathcal T(\pi)$ by
\begin{equation}\label{eq:comparison-negative-reversal}
\tau_i=
\begin{cases}
\eta_i,&i\notin B,\\
-\eta_{b_{k+1-a}},&i=b_a,\quad 1\leq a\leq k.
\end{cases}
\end{equation}
When $k=0$, the second case is absent.
We may equivalently construct $\tau$ as follows: regard barred letters as negative integers so that we may identify $\pi$ with a signed permutation in $B_n$,
take the inverse $\pi^{-1}\in B_n$ written in window notation, and reverse its subsequence
of negative entries while leaving all positive entries fixed.

Since $\eta\in\symm_n$, $\tau$ is indeed a signed permutation.
Its negative positions are exactly $B$. For $i\notin B$,
\[
\tau_i\geq i
\quad\Longleftrightarrow\quad
\eta_i\geq i\text{ and }i\notin B
\quad\Longleftrightarrow\quad
i\in N.
\]
Consequently,
\begin{equation}\label{eq:comparison-parameters}
\operatorname{neg}(\tau)=k,
\qquad
\operatorname{wex}(\tau)=u.
\end{equation}

\medskip
\noindent
\emph{Bijectivity.}
Given a signed permutation $\tau$ in the target set, recover
\[
B=\{i\in[n]:\tau_i<0\}=\{b_1<\cdots<b_k\}
\]
and define
\[
\eta_i=
\begin{cases}
\tau_i,&i\notin B,\\
|\tau_{b_{k+1-a}}|,&i=b_a.
\end{cases}
\]
The absolute values of $\tau$ are precisely $[n]$, so
$\eta\in\mathfrak S_n$.
Let $\sigma=\eta^{-1}$, and bar precisely the values in $B$
in the one-line notation of $\sigma$.
This recovers a bicolored permutation $\pi$ with $k$ bars.

Among the $n-k$ indices outside $B$, exactly $u$ satisfy
$\eta_i\geq i$, because $\operatorname{wex}(\tau)=u$.
The remaining $n-k-u=r$ satisfy $\eta_i<i$ and therefore
correspond to the unbarred excedances of $\pi$.
Thus $\exc{\pi}=r$.
Reversing the negative subsequence twice restores it, so this
construction and \eqref{eq:comparison-negative-reversal}
are mutually inverse.

\medskip
\noindent
\emph{Preservation of crossings.}
We expand the three crossing conditions according to the signs
of the relevant entries and compare their contributions with the five counts in Definition~\ref{def:cr}.

\begin{enumerate}
\item
Condition~\eqref{eq:comparison-crossing-upper} forces
$\tau_i,\tau_j>0$ and $i,j\in N$.
Its contribution is therefore
\[
\#\{(j_1,j_2)\in N^2:
j_1<j_2\leq\eta_{j_1}<\eta_{j_2}\},
\]
which is the $NN$ contribution to $\cro{\pi}$.

\item
Condition~\eqref{eq:comparison-crossing-mixed} forces
$\tau_i<0<\tau_j$ and hence $i\in B$. Since $i,j>0$, the inequality $-i<j$
is automatic. Its remaining inequalities are
\[
j\leq|\tau_i|<\eta_j,
\]
which also force $j\in N$.
The negative values $\tau_b$ satisfy
\[
\{|\tau_b|:b\in B\}=\{\eta_b:b\in B\}.
\]
Each value in either set occurs exactly once.
Hence this contribution is
\[
\#\{(b,j)\in B\times N:j\leq\eta_b<\eta_j\},
\]
the $BN$ contribution to $\cro{\pi}$.

\item
Suppose that
\eqref{eq:comparison-crossing-lower} holds with both
entries $\tau_i,\tau_j$ positive. Put $p_1=j$ and $p_2=i$.
The condition becomes
\[
\eta_{p_1}<\eta_{p_2}<p_1<p_2.
\]
These inequalities force $p_1,p_2\in P$.
Thus this contribution is precisely the $PP$ contribution
to $\cro{\pi}$.

\item
Suppose that
\eqref{eq:comparison-crossing-lower} holds with
$\tau_i>0>\tau_j$.
Writing $p=i$ and $b=j$, its condition becomes
\[
\eta_p<b<p.
\]
In particular, $p\in P$ and $b\in B$.
Conversely, every such pair satisfies
\eqref{eq:comparison-crossing-lower}.
This gives the $BP$ contribution to $\cro{\pi}$.

\item
Suppose that
\eqref{eq:comparison-crossing-lower} holds with both
entries negative, which forces $i,j\in B=\{b_1<b_2<\dots<b_k\}$. Write $j=b_a$ and $i=b_c$, where $a<c$.
The inequalities involving a positive position and a
negative entry are automatic, and the remaining condition is
\[
\tau_{b_c}>\tau_{b_a}.
\]
By \eqref{eq:comparison-negative-reversal}, this is equivalent to
\[
\eta_{b_{k+1-c}}<\eta_{b_{k+1-a}}.
\]
The map
\[
(a,c)\longmapsto(k+1-c,k+1-a)
\]
is a bijection on the pairs of indices with the first smaller
than the second. Therefore this contribution equals
\[
\#\{(b_d,b_e)\in B^2:
b_d<b_e,\ \eta_{b_d}<\eta_{b_e}\},
\]
the $BB$ contribution to $\cro{\pi}$.
\end{enumerate}

The remaining sign possibility in
\eqref{eq:comparison-crossing-lower},
namely $\tau_i<0<\tau_j$, is impossible.
Thus the five cases above exhaust all crossings of $\tau$, without
overlap, and reproduce exactly the five terms in
Definition~\ref{def:cr}.
This proves \eqref{eq:comparison-crossing-preserved}.

\medskip
\noindent
\emph{Preservation of alignments.}
Recall the alignment--crossing identity of Cho and Park in \eqref{eq:comparison-CP-complement}: for every $\tau\in \fB_n$,
\[
\begin{split}
\alSgn(\tau)
+\crSgn(\tau)
={}&
\bigl(n-\operatorname{wex}(\tau)\bigr)
\bigl(\operatorname{wex}(\tau)-1+\operatorname{neg}(\tau)\bigr)\\
&-\binom{\operatorname{neg}(\tau)}2.
\end{split}
\]
Using \eqref{eq:comparison-parameters} and $n=r+k+u$,
the right-hand side becomes
\begin{align*}
(n-u)(u-1+k)-\binom k2
&=(r+k)(u-1+k)-\binom k2\\
&=ru+ku+rk-r+\binom k2\\
&=\binom n2-\binom{r+1}2-\binom u2\\
&=\delta_{n,k,r}.
\end{align*}
On the other hand,
Corollary~\ref{cor:cr-al-complement-for-pi} gives
\[
\cro{\pi}+\al{\pi}
=\delta_{n,k,r}.
\]
Subtracting the already established equality
\eqref{eq:comparison-crossing-preserved} from these two
identities yields
\[
\alSgn(\mathcal T(\pi))
=\delta_{n,k,r}-\cro{\pi}
=\al{\pi}.
\]
This proves \eqref{eq:comparison-alignment-preserved}.

Finally, the negative positions of $\mathcal T(\pi)$ are
exactly the barred values of $\pi$.
Hence the value $1$ is unbarred in $\pi$ if and only if
$\mathcal T(\pi)_1>0$.
\end{proof}

\begin{corollary}\label{cor:comparison-joint-equidistribution}
For integers $n\ge 1$ and $0\le k\le n$, the joint distribution generating functions satisfy
\[
\sum_{\substack{
\pi\in\coloredperm{n}{k}\\
\exc{\pi}=r}}
z^{\cro{\pi}}q^{\al{\pi}}
=
\sum_{\substack{
\tau\in \fB_{n,k}\\
\operatorname{wex}(\tau)=n-k-r}}
z^{\crSgn(\tau)}
q^{\alSgn(\tau)}.
\]
The same identity holds after restricting the left-hand side
to bicolored permutations in which the value $1$ is unbarred and the
right-hand side to signed permutations with $\tau_1>0$.
Moreover, for every $\pi$ in the domain of $\mathcal T$,
\[
\ell(f_\pi)
=
r(n-k-r)+\alSgn(\mathcal T(\pi)).
\]
\end{corollary}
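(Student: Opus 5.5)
The plan is to derive all three claims of Corollary~\ref{cor:comparison-joint-equidistribution} directly from the bijection $\mathcal T$ of Theorem~\ref{thm:comparison-signed-statistics}, with no new combinatorics. Fix $r$ with $0\le r\le n-k$ and set $u=n-k-r$. The theorem gives a bijection
\[
\mathcal T:\{\pi\in\coloredperm{n}{k}:\exc{\pi}=r\}\to\{\tau\in\coloredperm{n}{k}:\wex(\tau)=u\}.
\]
It preserves crossings by \eqref{eq:comparison-crossing-preserved} and alignments by \eqref{eq:comparison-alignment-preserved}. So for each $\pi$ the monomial $z^{\cro{\pi}}q^{\al{\pi}}$ equals $z^{\crSgn(\mathcal T(\pi))}q^{\alSgn(\mathcal T(\pi))}$. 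Summing over the domain and reindexing the sum by $\tau=\mathcal T(\pi)$ gives the first identity.

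For the restricted identity, I would use the last sentence of Theorem~\ref{thm:comparison-signed-statistics}: the value $1$ is unbarred in $\pi$ if and only if $\mathcal T(\pi)_1>0$. Hence $\mathcal T$ restricts to a bijection from $\{\pi: \exc{\pi}=r,\ 1 \text{ unbarred}\}$ onto $\{\tau:\wex(\tau)=u,\ \tau_1>0\}$. The same termwise argument then applies.

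For the length formula, I would combine Definition~\ref{def:al}, which gives $\ell(f_\pi)=r(n-k-r)+\al{\pi}$, with \eqref{eq:comparison-alignment-preserved}. Equivalently, one can use \eqref{eq:affine-length-al} together with \eqref{eq:al-positive-formula}.

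No step is a real obstacle. The only point needing care is that the identity is stated for each fixed $r$, so the summation is over the domain of $\mathcal T$ for that $r$ and matches the target condition $\wex(\tau)=n-k-r$.
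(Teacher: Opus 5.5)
Your proposal is correct and follows the paper's own argument: the generating-function identities, including the restricted one via the ``$1$ unbarred $\Leftrightarrow \mathcal T(\pi)_1>0$'' clause, come from summing the pointwise equalities of Theorem~\ref{thm:comparison-signed-statistics} over the bijection $\mathcal T$. The affine-length formula comes from Definition~\ref{def:al} combined with \eqref{eq:comparison-alignment-preserved}.
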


\begin{proof}
The generating-function identities follow by summing the
pointwise equalities in
Theorem~\ref{thm:comparison-signed-statistics}.
The affine-length identity follows from
Definition~\ref{def:al} and
\eqref{eq:comparison-alignment-preserved}.
\end{proof}

\section{Future directions}
\label{sec:future-directions}

A natural next step is to seek a representation-theoretic
model for $\coloredfunc{n}{k}{r}(\xxx;q)$ (resp. $\Phi^{\mathrm{unb}}_{n,k,r}(\xxx;q)$) that also explains
its algebraic structure.
For every choice of $n,k,r$ such that this symmetric function does not vanish, its degree-zero coefficient is
\[
[q^0]\coloredfunc{n}{k}{r}(\xxx;q)\text{ (resp. $[q^0]\Phi^{\mathrm{unb}}_{n,k,r}(\xxx;q)$)}=h_n(\xxx).
\]
Since $h_n$ is the Frobenius characteristic of the trivial
$\mathfrak S_n$-representation, this is compatible with the
degree-zero component of a connected graded algebra:
the one-dimensional space spanned by its identity element.

\begin{problem}\label{prob:graded-algebra-model}
For each triple $(n,k,r)$ for which the indexing set defining
$\coloredfunc{n}{k}{r}$ (resp. $\Phi^{\mathrm{unb}}_{n,k,r}$) is nonempty, construct a natural
finite-dimensional commutative $\mathbb C$-algebra
\[
R_{n,k,r}=\bigoplus_{d\geq0}R_{n,k,r}^{(d)},
\qquad
R_{n,k,r}^{(0)}=\mathbb C\cdot1,
\]
equipped with an action of $\mathfrak S_n$ by
degree-preserving algebra automorphisms, such that
\[
\sum_{d\geq0}q^d\Frob\bigl(R_{n,k,r}^{(d)}\bigr)
=
\coloredfunc{n}{k}{r}(\xxx;q)\text{ (resp. $\Phi^{\mathrm{unb}}_{n,k,r}(\xxx;q)$)}.
\]
The construction should be uniform in the parameters and
give an explicit combinatorial or geometric description of
the multiplication.
\end{problem}

The requirement of an explicit algebraic construction is
essential. Whenever a graded representation with the prescribed
Frobenius characteristic exists and its degree-zero component
is trivial, one can formally obtain a connected graded algebra
by declaring all products of positive-degree elements to be zero.
Such a square-zero construction does not explain the structure
underlying the symmetric function.

A particularly desirable solution would provide a presentation
of $R_{n,k,r}$ as a quotient of a polynomial ring by an explicit
$\mathfrak S_n$-stable homogeneous ideal.
The number of variables and their positive degrees need not
be prescribed in advance.
Such a model would give a representation-theoretic explanation
of Schur positivity and an algebraic interpretation of the
normalized affine-length grading.

A natural candidate is the Leray model of the Boolean matroid mentioned in the introduction whose graded Frobenius series is 
\[
    Q_{n}^{\mathsf{lab}}(\xxx;t,y)=\sum_{k,r}\left.\coloredfunc{n}{k}{r}(\xxx;q)\right|_{q=1}t^r y^k.
\]
One needs to find a further $q$-grading so that the triply graded Hilbert series of the refined Leray model of the Boolean matroid is given by 
\[
    \sum_{\pi\in\fB_n} y^{\nega{\pi}}t^{\exc{\pi}}q^{\al{\pi}}.
\]

\section{Acknowledgments}\label{sec:acknowledgements}
We would like to thank Alessandro Iraci, Martina Lanini, and Brendon Rhoades for discussions that led to the original question motivating this work, and Brendon Rhoades for bringing the question to our attention.

\section{Declaration of generative AI and AI-assisted technologies in the manuscript preparation process}
During the preparation of this work, the authors used GPT-5.6 sol for constructing the bijection in Section~\ref{sec:proofs-theta-formulas} via Lentfer's monomial codes, for assisting with LaTeX formatting, and for polishing the language. The authors reviewed and edited the output as needed and takes full responsibility for the content of the published article.

\printbibliography

\end{document}